\newtheorem{thm}{Theorem}[section]
\newtheorem{cor}[thm]{Corollary}
\newtheorem{lem}[thm]{Lemma}
\newtheorem{prop}[thm]{Proposition}
\newtheorem{rem}[thm]{Remark}
\newtheorem{ex}[thm]{Example}
\newtheorem{defn}[thm]{Definition}
\numberwithin{equation}{section}
\newcommand{\trace}{\text{trace}}
\begin{document}

\title[Viscosity solutions to curvature flow equations on manifolds]{
Generalized motion of level sets by functions of their curvatures on
Riemannian manifolds}
\author{D. Azagra, M. Jim{\'e}nez-Sevilla, F. Maci{\`a}}

\address{Departamentos de An{\'a}lisis Matem{\'a}tico y de Matem{\'a}tica Aplicada\\ Facultad de
Matem{\'a}ticas\\ Universidad Complutense\\ 28040 Madrid, Spain}

\date{August 29, 2007}

\thanks{D. Azagra was supported by grants MTM-2006-03531 and UCM-CAM-910626.
M. Jimenez-Sevilla was supported by a fellowship of the Ministerio
de Educacion y Ciencia, Spain. F. Maci{\`a} was supported by program
"Juan de la Cierva" and projects MAT2005-05730-C02-02 of MEC
(Spain) and PR27/05-13939 UCM-BSCH (Spain).}

\email{azagra@mat.ucm.es, marjim@mat.ucm.es,
fabricio\_macia@mat.ucm.es}

\keywords{Parabolic PDEs, Hamilton-Jacobi equations, viscosity
solution, Riemannian manifold, mean curvature flow, Gaussian
curvature flow.}

\subjclass[2000]{53C44, 58J05, 35D05, 35J70, 47J35, 35G25, 35J60}

\begin{abstract}
We consider the generalized evolution of compact level sets by
functions of their normal vectors and second fundamental forms on
a Riemannian manifold $M$. The level sets of a function
$u:M\to\mathbb{R}$ evolve in such a way whenever $u$ solves an
equation $u_{t}+F(Du, D^{2}u)=0$, for some real function $F$
satisfying a geometric condition. We show existence and uniqueness
of viscosity solutions to this equation under the assumptions that
$M$ has nonnegative curvature, $F$ is continuous off $\{ Du=0\}$,
(degenerate) elliptic, and locally invariant by parallel
translation. We then prove that this approach is geometrically
consistent, hence it allows to define a generalized evolution of
level sets by very general, singular functions of their
curvatures. For instance, these assumptions on $F$ are satisfied
when $F$ is given by the evolutions of level sets by their mean
curvature (even in arbitrary codimension) or by their positive
Gaussian curvature. We also prove that the generalized evolution
is consistent with the classical motion by the corresponding
function of the curvature, whenever the latter exists. When $M$ is
not of nonnegative curvature, the same results hold if one
additionally requires that $F$ is uniformly continuous with
respect to $D^2 u$. Finally we give some counterexamples showing
that several well known properties of the evolutions in
$\mathbb{R}^{n}$ are no longer true when $M$ has negative
sectional curvature.
\end{abstract}

\maketitle

\section{Introduction}

In the last 30 years there has been a lot of interest in the
evolution of hypersurfaces of $\mathbb{R}^{n}$ by functions of
their curvatures. In this kind of problem one is asked to find a one
parameter family of orientable, compact hypersurfaces $\Gamma_{t}$
which are boundaries of open sets $U_{t}$ and satisfy
\begin{eqnarray}\label{geometric equation}
& & V=-G(\nu, D\nu) \textrm{ for } t>0, \, x\in \Gamma_{t},  \,
\textrm{
and } \\
& & \Gamma_{t}|_{t=0}=\Gamma_{0} \notag
\end{eqnarray}
for some initial set $\Gamma_{0}=\partial U_{0}$, where $v$ is the
normal velocity of $\Gamma_{t}$, $\nu=\nu(t,\cdot)$ is a normal
field to $\Gamma_t$ at each $x$, and $G$ is a given (nonlinear)
function.

Two of the most studied examples are the evolutions by mean
curvature and by (positive) Gaussian curvature. In both cases,
short time existence of classical solutions has been established.
 For strictly convex initial data $U_{0}$, it has been shown that
$U_{t}$ shrinks to a point in finite time, and moreover,
$\Gamma_{t}$ becomes spherical at the end of the contraction. See
\cite{Andrews, GageHamilton, Grayson2, Grayson3, Huisken1,
Huisken2, Tso} and the references therein.

For dimension $n\geq 3$ it has been shown \cite{Grayson} that a
hypersurface evolution $\Gamma_{t}$ may develop singularities
before it disappears. Hence it is natural to try to develop weak
notions of solutions to \eqref{geometric equation} which allow to
deal with singularities of the evolutions, and even with nonsmooth
initial data $\Gamma_{0}$.

There are two mainstream approaches concerning weak solutions of
\eqref{geometric equation}: the first one uses geometric measure
theory to construct (generally nonunique) varifold solutions, see
\cite{Brakke, Ilmanen2}, while the second one adapts the theory of
second order viscosity solutions developed in the 1980's (see
\cite{CIL} and the references therein) to show existence and
uniqueness of level-set weak solutions to \eqref{geometric
equation}.

In this paper we will focus on this second approach. The first
works to develop a notion of {\em viscosity} level set solution to
\eqref{geometric equation} were those of Evans and Spruck
\cite{ES1} and, independently developed, Chen, Giga and Goto
\cite{ChGG}. This was followed by many important developments,
which we find impossible to properly quote here; we refer the
reader to the very comprehensive monograph \cite{Giga} and the
bibliography therein. This level set approach consists in
observing that a smooth function
$u:[0,T]\times\mathbb{R}^{n}\to\mathbb{R}$ with $Du:=D_{x}u\neq 0$
has the property that all its level sets evolve by
\eqref{geometric equation} if and only if $u$ is a solution of
\begin{equation}\label{viscosity equation}
u_{t}+F(Du, D^{2}u)=0,
\end{equation}
where $F$ is related to $G$ in \eqref{geometric equation} through
of the following formula:
    \begin{equation}\label{relation between F, G}
    F(p, A)=|p|\, G\left(\frac{p}{|p|}, \,
    \frac{1}{|p|}\left(I-\frac{p\otimes p}{|p|^{2}}\right)A\right).
    \end{equation}
The function $F$ is assumed to be continuous off $\{p=0\}$ and
(degenerate) elliptic, that is
\begin{equation}
F(p,B)\leq F(p, A) \, \textrm{ whenever }\, A\leq B.
\end{equation}
Because of \eqref{relation between F, G}, $F$ also has the following
geometric property:
\begin{equation}\label{geometricity}
F(\lambda p, \lambda A+\mu p\otimes p)=\lambda F(p, A) \textrm{ for
all } \lambda>0, \mu\in\mathbb{R}.
\end{equation}
The function $F$ does not generally admit any continuous extension
to $\mathbb{R}^{n}\times\mathbb{R}^{n^2}$ but, if it is bounded
near $\{p=0\}$ (this is the case of the mean curvature evolution
equation), one can show that there is a unique viscosity solution
to \eqref{viscosity equation} with initial datum $u(0,x)=g(x)$
(for any continuous $g$ such that $\Gamma_{0}=\{x: g(x)=0\}$).
Next one can also see that if $\theta:\mathbb{R}\to\mathbb{R}$ is
continuous and $u$ is a solution of \eqref{viscosity equation}
then $\theta\circ u$ is a solution too, and this, together with a
comparison principle, allows to show that the generalized
geometric evolution
$$\Gamma_{0}\to \Gamma_{t}:=\{x: u(t,x)=0\}$$
is well defined (that is, the zero level set of a solution to
\eqref{viscosity equation} only depends on the zero level set of
its initial datum). It is also possible to show that this level
set evolution agrees with any classical solution of
\eqref{geometric equation}.

When $F(p, A)$ is not bounded as $p\to 0$ (this is the case of
more singular equations such as the Gaussian evolution), then the
standard notion of viscosity solution to \eqref{viscosity
equation} (as is used, for instance, in \cite{ChGG}) at points
$z=(t,x)$ where the test function $\varphi$ satisfies
$D\varphi(z)=0$ is not suitable to tackle the problem. In this
case two different modifications of the notion of solution have
been proposed in the literature.

One possibility is simply not to specify any condition for the
derivatives of a test function $\varphi$ such that $u-\varphi$
attains a maximum or a minimum at a point $(t_{0}, x_{0})$ with
$D\varphi(t_{0}, x_{0})=0$. This is Goto's approach in
\cite{Goto}. When one uses this definition of solution, the
corresponding comparison theorem becomes harder to prove, and it
is indeed a stronger statement since the class of solutions
becomes bigger in this case, while the existence result is
comparatively weaker.

The other possibility is to make the class of test functions
$\varphi$ smaller, in a clever way so that, if $z_k\to z_0$ and
$D\varphi_k(z_k)\to 0$, one can show that $F(D\varphi_k(z_k),
D^{2}\varphi_k(z_k))$ goes to $0$, and then to demand that a
subsolution $u$ should satisfy that if $u-\varphi$ has a maximum
at $z_{0}$ then $\varphi_{t}(z_{0})\leq 0$. This is what Ishii and
Souganidis did in \cite{IS}. The corresponding (sub)solutions are
called $\mathcal{F}$-(sub)solutions in Giga's book \cite{Giga}. In
this approach the maximum principle is relatively easier to prove,
while existence becomes harder (and is really a stronger result,
because the class of solutions is smaller in this case).

\medskip

The aim of this paper is to investigate to what extent one can
develop a general theory of (viscosity) level-set solutions to the
problem of the evolution of hypersurfaces by functions of their
curvatures in a Riemannian manifold. To the best of our knowledge,
the only work in this direction is Ilmanen's paper \cite{Ilmanen}
(in fact this is the only paper we know of in which second order
viscosity solutions are employed to deal with a second order
evolution equation within the context of Riemannian manifolds). In
\cite{Ilmanen} Ilmanen shows existence and uniqueness of a
(standard) viscosity solution to the mean curvature evolution
equation, that is \eqref{geometric equation} in the case when $F$
is given by $$F(p, A)=-\trace\left(\left(I-\frac{p\otimes
p}{|p|^{2}}\right)A\right),$$ with initial condition
$u(0,x)=g(x)$, thus obtaining a corresponding generalized
evolution by mean curvature, some of whose geometric properties he
next studies. For instance, he proves that if noncompact initial
data $\Gamma_{0}$ are allowed then one loses uniqueness of the
generalized geometric evolution.

In recent years, an interest has grown in the use of viscosity
solutions of (first order) Hamilton-Jacobi equations defined on
Riemannian manifolds (in relation to dynamical systems, to
geometric problems, or from a theoretical point of view), see
\cite{ManteMenu, FathiSino1, FathiSino2,  AFL1, LedZhu,
DaviniSino, GurskyViaclovsky}, but no second order theory, apart
from Ilmanen's paper, has apparently been developed for parabolic
equations (in the case of stationary, degenerate elliptic
equations, such a study was recently started in \cite{AFS}).

We believe that a level set method for generalized evolution of
hypersurfaces by functions of their curvatures can be useful in
the setting of Riemannian manifolds. On the one hand we think that
it is very natural, from a geometric point of view, to try to
study the evolutions of level sets in a general Riemannian
manifold $M$ by their Gaussian (or by other functions of their)
curvatures, in a way that is supple enough so that nonsmooth
initial data and singularities of the evolutions are allowed. On
the other hand, as one sees, for instance, by restricting to the
case $M=\mathbb{R}^{n}$ endowed with a non Euclidean metric, the
tools developed here allow to treat level set evolutions in
inhomogeneous media, in which the function $F$ depends (in a very
special manner) on the position variable $x$.

\medskip

Let us briefly describe the main results of this paper. In Section
2 we consider equations of the form \eqref{geometric equation},
\eqref{viscosity equation} for level sets of functions $u$ defined
on a Riemannian manifold $M$, and we show how the $F$'s
corresponding to the evolutions by mean curvature (even in
arbitrary codimension, in the line of \cite{AS}) and by (positive)
Gaussian curvature are extended to $J_{0}^{2}(M)$ in such a way
that $F$ is (degenerate) {\em elliptic, translation invariant,
geometric}, and continuous off $\{Du=0\}$ (see properties ({\bf A
- D}) in Section 2 below). Following \cite{IS, Giga}, for each $F$
we next define an appropriate class of test functions
$\mathcal{A}(F)$ which allows us to deal with equation
\eqref{viscosity equation} on $M$, and we define the corresponding
class of $\mathcal{F}$-solutions, see Definitions \ref{def of the
F class}, \ref{def of the A(F) class} below. We also show that for
all $F$ which are continuous off $\{Du=0\}$, elliptic, translation
invariant and geometric, one has that
$\mathcal{A}(F)\neq\emptyset$ provided that $M$ is compact.
Moreover, in the cases when $F$ is given by the mean curvature or
the Gaussian curvature evolution equations, we have
$\mathcal{A}(F)\neq\emptyset$ no matter whether $M$ is compact or
not.

In Section 3 we present some technical results that will be used
later on in the proofs of the main results.

Section 4 is devoted to proving a comparison result for viscosity
solutions of \eqref{viscosity equation} on $M$: under the above
assumptions on $F$ (namely, continuity, ellipticity, geometricity
and translation invariance) we show that if $M$ has nonnegative
curvature $u$ is a subsolution, $v$ is a supersolution, $u\leq v$
on $\{0\}\times M$, and $\limsup_{(t,x)\to\infty}(u-v)\leq 0$
(this condition is understood to be requiring nothing when $M$ is
compact), then $u\leq v$ on $[0,T]\times M$. When $M$ is not of
nonnegative curvature, we have to additionally require that $F$ be
uniformly continuous with respect to $D^2 u$.

In Section 5 we show that Perron's method (first used in \cite{I})
works to produce $\mathcal{A}(F)$-solutions of \eqref{viscosity
equation} on a Riemannian manifold $M$, provided that comparison
holds and $\mathcal{A}(F)\neq\emptyset$.

Therefore, for all such $M$ and $F$, for every compact subset
$\Gamma_{0}$ of $M$, and for every continuous function $g$ on $M$
such that $\Gamma_{0}=\{x\in M: g(x)=0\}$, there exists a unique
solution of \eqref{viscosity equation} on $M$ with initial
condition $u(0,\cdot)=g$. One can then define, for each compact
$\Gamma_{0}$, an evolution $\Gamma_{t}=\{x\in M: u(t,x)=0\}$,
$t\geq 0$. In Section 6 we see that $\Gamma_{t}$ does not depend
on the function $g$ chosen to represent $\Gamma_{0}$, and
consequently the generalized geometric evolution
$\Gamma_{0}\mapsto\Gamma_{t}$ is well defined.

Next, in Section 7 we prove that this generalized evolution is
consistent with the classical motion, whenever the latter exists.
Namely, if $\left(  \Gamma_{t}\right)  _{t\in\left[ 0,T\right] }$
is a family of smooth, compact, orientable hypersurfaces in a
Riemannian manifold $M$ evolving according to a classical
geometric motion, locally depending only on its normal vector
fields and second fundamental forms according to an equation of
the form \eqref{geometric equation}, and $\Gamma_{0}$ can be
represented as the zero level set of a smooth function $g$ on $M$,
then $\Gamma_{t}$ coincides with the generalized level set
evolution (with initial datum $\Gamma_{0}$) defined above.

Finally, in Section 8, we give some counterexamples showing that
several well known properties of generalized solutions to the mean
curvature flow cannot be extended from Euclidean spaces to
Riemannian manifolds of negative sectional curvature. For
instance, Ambrosio and Soner \cite{AS} showed that the distance
function from $\Gamma_{t}\subset\mathbb{R}^{n}$ given by
$|d|(t,x)=\textrm{dist}(x, \Gamma_{t})$ is a supersolution of
\eqref{viscosity equation} when $F$ corresponds to the mean
curvature evolution equation. We show that this result fails when
$\mathbb{R}^{n}$ is replaced with a manifold of negative sectional
curvature. On the other hand, if $M$ has negative curvature, then
equation \eqref{viscosity equation} does not preserve Lipschitz
properties of the initial data, in contrast with \cite[Chapter
3]{Giga}. And, again in the case of the mean curvature flow, if
$\Gamma_{0}$, $\hat{\Gamma}_{0}$ are smooth $1$-codimensional
submanifolds of a manifold $M$ of negative curvature, then the
function $t\mapsto\textrm{dist}(\Gamma_{t}, \hat{\Gamma}_{t})$ can
be decreasing, in contrast with \cite[Theorem 7.3]{ES1}.

An the end of this article, the reader will find an appendix
describing a comparison and an existence result for (standard)
viscosity solutions to general evolution equations of the form
$$
u_{t}+F(x,t, u, Du, D^2 u)=0
$$
where $F$ has no singularities. We omit the proofs because they
resemble (and are easier than) those of the main comparison and
existence result for $\mathcal{F}$-solutions of \eqref{viscosity
equation} given in Sections 4 and 5.

\medskip

\noindent {\bf Notation.} $M$ will always be a finite-dimensional
Riemannian manifold. We will write
$\left\langle\cdot,\cdot\right\rangle$ for the Riemannian metric
and $|\cdot|$ for the Riemannian norm on $M$. The tangent and
cotangent space of $M$ at a point $x$ will be respectively denoted
by $TM_{x}$ and $TM_{x}^{*}$. We will often identify them via the
isomorphism induced by the Riemannian metric. The space of
bilinear forms on $TM_{x}$ (respectively symmetric bilinear forms)
will be denoted by $\mathcal{L}^{2}(TM_{x})$ (resp.
$\mathcal{L}^{2}_{s}(TM_{x})$). Elements of
$\mathcal{L}^{2}(TM_{x})$ will be denoted by the letters $A, B, P,
Q$, and those of $TM_{x}^{*}$ by $\zeta, \eta$,
 etc. Also, we will respectively denote the cotangent bundle and the tensor bundle
of symmetric bilinear forms in $M$ by
\[
TM^{*}:=\bigcup_{x\in M}TM_{x}^{*},\qquad T_{2,
s}(M):=\bigcup_{x\in M}\mathcal{L}^{2}_{s}(TM_x).
\]
We will also consider the two-jet bundles:
\[
J^{2}M:=\bigcup_{x\in
M}TM_{x}^{*}\times\mathcal{L}^{2}_{s}(TM_x),\qquad
J_{0}^{2}(M):=\bigcup_{x\in
M}\left(TM_{x}^{*}\setminus\left\{0_{x}\right\}\right)\times\mathcal{L}^{2}_{s}(TM_x).
\]
The letters $X, Y, Z$ will stand for smooth vector fields on $M$,
and $\nabla_{Y}X$ will always denote the covariant derivative of
$X$ along $Y$. Curves and geodesics in $M$ will be denoted by
$\gamma$, $\sigma$, and their velocity fields by $\gamma',
\sigma'$. If $X$ is a vector field along $\gamma$ we will often
denote $X'(t)= \frac{d}{dt}X(t)=\nabla_{\gamma'(t)}X(t)$. Recall
that $X$ is said to be parallel along $\gamma$ if $X'(t)=0$ for
all $t$. The Riemannian distance in $M$ will always be denoted by
$d(x,y)$ (defined as the infimum of the lengths of all curves
joining $x$ to $y$ in $M$).

Given a smooth function $u:M\rightarrow\mathbb{R}$, we will denote
its differential by $D_{x}u \in TM^{*}$; its gradient vector field
will be written as $\nabla u$, and its Hessian as $D_{x}^{2}u$.
Recall that, for any two vector fields $X,Y$ satisfying $X(p)=v,
Y(p)=w$ at some $p \in M$ we have:
\[
D_{x}^{2}u\left(X,Y\right):=\left\langle\nabla_{Y}\nabla
u,X\right\rangle, \qquad
D_{x}^{2}u\left(v,w\right):=D_{x}^{2}u\left(X,Y\right)(p).
\]
Given a function $v:M\rightarrow\mathbb{R}$ we will use the
notation:
\[
\begin{array}{c}
 v^*(t,x)=\lim_{r\downarrow 0}\sup\{v(s,y):y\in M,\, s>0,\, |t-s|\leq r, \, d(y,x)\leq r\},\medskip
 \\
 v_*(t,x)=\lim_{r\downarrow 0}\inf\{v(s,y):y\in M,\, s>0,\, |t-s|\leq r, \, d(y,x)\leq r\};  \\
\end{array}
\]
that is $v^*$ denotes the upper semicontinuous envelope of $v$
(the smallest upper semicontinuous function, with values in
$[-\infty,\infty]$, satisfying $v\leq v^*$), and similarly $v_*$
stands for the lower semicontinuous envelope of $v$.

We will make frequent use of the exponential mapping $\exp_{x}$
and of the parallel translation along a geodesic $\gamma$. Recall
that for every $x\in M$ there exists a mapping $\exp_{x}$, defined
on a neighborhood of $0$ in the tangent space $TM_x$, and taking
values in $M$, which is a local diffeomorphism and maps straight
line segments passing through $0$ onto geodesic segments in $M$
passing through $x$. The exponential mapping induces a local
diffeomorphism on the cotangent space $TM_{x}^{*}$, via the
identification given by the metric, that will be also denoted by
$\exp_{x}$. On the other hand, for a minimizing geodesic
$\gamma:[0, \ell]\to M$ connecting $x$ to $y$ in $M$, and for a
vector $v\in TM_{x}$ there is a unique parallel vector field $P$
along $\gamma$ such that $P(0)=v$, this is called the parallel
translation of $v$ along $\gamma$. The mapping $TM_{x}\ni v\mapsto
P(\ell)\in TM_{y}$ is a linear isometry from $TM_{x}$ onto
$TM_{y}$ which we will denote by $L_{xy}$. This isometry naturally
induces an isometry between the space of bilinear forms on
$TM_{x}$ and the space of bilinear forms on $TM_{y}$. Whenever we
use the notation $L_{xy}$ we assume implicitly that $x$ and $y$
are close enough to each other so that this makes sense.

By $i_{M}(x)$ we will denote the injectivity radius of $M$ at $x$,
that is the supremum of the radius $r$ of all balls $B(0_{x}, r)$
in $TM_{x}$ for which $\exp_{x}$ is a diffeomorphism from
$B(0_{x}, r)$ onto $B(x,r)$. Similarly, $i(M)$ will denote the
global injectivity radius of $M$, that is $i(M)=\inf\{i_{M}(x) :
x\in M\}$. Recall that the function $x\mapsto i_{M}(x)$ is
continuous. In particular, if $M$ is compact, we always have
$i(M)>0$.

\bigskip

\section[general curvature evolution equations]{General curvature evolution
equations on Riemannian manifolds}

Consider the following evolution equation on a
 Riemannian manifold $M$, given by
    \begin{equation} \label{curvatureevolutioneq} \tag{\texttt{CEE}}
   u_t-F(Du, D^2 u) \ =0 \,\, \textrm{ on }
    \,\ (0,T)\times M,
    \end{equation}
    $$ u(0,x)=g(x), \ \   \textrm{ on  } x\in M, $$
where $u$ is a function of $(t,x)\in [0,T)\times M$.

In what follows, $u_t$, $Du$ and $D^2 u$ will stand for $D_{t}u$,
$D_x u(t,x)\in TM^*_x$ and $D_x^2u(t,x)\in \mathcal L^2_s(TM_x)$,
respectively. The function $F$ is assumed to be continuous on the
normal vector to the level set $\Gamma_t =\{x\in M: u(t,x)=0\}$
and on the curvature tensor, and having the form
    \begin{equation} \label{FandG}
    F(\zeta, A)=|\zeta|\, G\left(\frac{\zeta}{|\zeta|}, \,
    \frac{1}{|\zeta|}\left(I-\frac{\zeta\otimes\zeta}{|\zeta|^{2}}\right)A\right),
    \end{equation}
for all $\zeta\in TM_{x}^{*}\setminus\{0_x\}$ and $A\in \mathcal
L^2_s(TM_x)$, where $G$ is any (nonlinear) function such that:
\begin{itemize}
\item[(\bf{A})] $F:J_{0}^{2}(M)\rightarrow \mathbb R$ is continuous;
\item[(\bf{B})] $F$ is {\em (degenerate) elliptic}, that is,  $$A\le B \implies F(\zeta,B)\le
F(\zeta, A),$$ for all $x\in M,$  \, $\zeta\in TM^*_x
\setminus\{0\}, \, A, B\in\mathcal{L}^{2}_{s}(TM_{x})$;
\item[(\bf{C})] $F$ is {\em translation invariant}, meaning that
there exists $\tau >0$ such that $$
 F(L_{xy}\zeta, A)=F(\zeta,L_{yx}(A)),
$$
for every $x,y \in M,$ \, $d(x,y)< \tau$, \, $\zeta \in
TM^*_x\setminus \{0\}$, \, $A\in  \mathcal{L}^2_s(TM_y)$.
\end{itemize}
Notice that, because
$$\left(I-\frac{\zeta\otimes\zeta}{|\zeta|^{2}}\right)\, (\zeta\otimes\zeta) = 0, $$ any
function $F$ of the form \eqref{FandG} also satisfies
\begin{itemize}
\item[(\bf{D})] $F$ is {\em geometric}, that is, $$F(\lambda\zeta, \lambda A+\mu\zeta\otimes\zeta)=\lambda F(\zeta,
A)$$ for every $\lambda>0$, $\mu\in\mathbb{R}$.
\end{itemize}

   \medskip

Two very important problems where such functions $F$ arise are the
evolutions of level sets by mean curvature and by Gaussian
curvature.
\medskip

\begin{ex}\label{example mean curvature} {\em \bf Motion of level sets by their mean curvature.} \end{ex}
If $u$ is a function on $[0,T]\times M$ such that $Du(t,x)\neq 0$
for all $t, x$ with $u(t,x)=c$, then each level set
$\Gamma_t=\{u(t,\cdot)=c \}$ evolves according to its mean curvature
if and only if $u$ satisfies
$$\frac{u_t}{|Du|}=\textrm{div}\left(\frac{Du}{|Du|}\right)$$ (that is, the
normal velocity of $\Gamma_{t}$ at a point $x$ equals $(n-1)$ times
the mean curvature of $\Gamma_{t}$ at $x$), which in turn is
equivalent to
    \begin{equation} \label{meancurvature} \tag{\texttt{MCE}}
   u_t-\trace \left( \left( I-\frac{Du \otimes Du}{|Du|^2}\right) D^2u\right) =0 \,\, \textrm{ on }
    \,\ (0,T)\times M.
    \end{equation}
That is, $u_{t}+F(Du, D^2 u)=0$, where
\begin{equation} \label{meancurvature2}
F(\zeta, A)=-\textrm{trace} \left( \left( I-\frac{\zeta \otimes
\zeta}{|\zeta|^2}\right) A\right).
\end{equation}
It is not difficult to see that the function
$F:J_{0}^{2}(M)\longrightarrow \mathbb R$ is continuous (though
the function $F$ remains undefined at $\zeta=0$ and, in fact,
there is no continuous extension of $F$ to $J^{2}(M)$.
Nevertheless, $F(\zeta, A)$ remains bounded as $\zeta\to 0$).

Let us now check that the function $F$ is degenerate elliptic. If
$P\leq Q$, since $R:= \displaystyle{I- \frac{\zeta\otimes
\zeta}{|\zeta|^2}\geq 0}$ and $S:=Q-P\geq 0$, we obtain from the
properties of the trace that $\trace(RS)\geq 0$ and therefore
\begin{equation}
F(\zeta,P)-F(\zeta,Q) =\trace \left(\left(  I- \frac{\zeta\otimes
\zeta}{|\zeta|^2} \right) \left( Q-P\right)\right)\geq 0.
\end{equation}

Finally, let us see that the function $F$ in \eqref{meancurvature2}
is translation invariant. Notice that
$\trace(A)=\trace(L_{xy}^{-1}\circ A\circ L_{xy})=
\trace(L_{yx}(A))$, and  \begin{align*}\trace \left(
\frac{\zeta\otimes \zeta}{|\zeta|^2} \circ L_{yx}(A) \right)&=
\trace \left(  L_{xy} \circ \frac{\zeta\otimes \zeta}{|\zeta|^2}
\circ L_{yx}(A) \circ L_{xy}^{-1}\right) \\ & = \trace \left( L_{xy}
\circ \frac{\zeta\otimes \zeta}{|\zeta|^2} \circ L_{xy}^{-1} \circ A
\right).
\end{align*}
On the other hand,
\begin{equation} \label{MCEInvariant}
\displaystyle{ L_{xy} \circ \frac{\zeta\otimes \zeta}{|\zeta|^2}
\circ L_{xy}^{-1}= \frac{L_{xy}\zeta\otimes
L_{xy}\zeta}{|L_{xy}\zeta|^2}},
\end{equation}
hence we immediately deduce that $F(L_{xy}\zeta,
A)=F(\zeta,L_{yx}(A))$ whenever $d(x,y) < i(M)$, $\zeta\in TM_x,
A\in\mathcal{L}^{2}_{s}(TM_y)$.

\bigskip

\begin{ex} {\em \bf Motion of level sets by their Gaussian curvature.}
\end{ex}
Now, if $u$ is a function on $[0,T]\times M$ such that $Du(t,x)\neq
0$ for all $t, x$ with $u(t,x)=c$, then all level sets
$\Gamma_t=\{u(t,\cdot)=c \}$ evolve according to their Gaussian
curvature if and only if $u$ satisfies
$$\frac{u_t}{|Du|}=\textrm{det}\left(\nabla^{T}\left(\frac{\nabla u}{|\nabla
u|}\right)\right),$$ where $\nabla^{T}$ stands for the orthogonal
projection onto $T\Gamma_t$ of the covariant derivative in $M$.
This equation is equivalent to
    \begin{equation} \label{Gaussiancurvature} \tag{\texttt{GCE}}
   u_t-|Du|\textrm{det} \left( \frac{1}{|Du|}\left( I-\frac{Du \otimes Du}{|Du|^2}\right) D^2u +
\frac{Du \otimes Du}{|Du|^2}\right) =0.
    \end{equation}
That is, $u_{t}+H(Du, D^2 u)=0$, where $$H(\zeta,
A)=-|\zeta|\textrm{det} \left( \left( I-\frac{\zeta\otimes
\zeta}{|\zeta|^2}\right)A +\frac{\zeta \otimes
\zeta}{|\zeta|^2}\right).$$

However, the function $H$ is not elliptic, so this problem cannot
be treated, in its most general form, with the theory of viscosity
solutions. Nevertheless, if our initial data $u(0,x)=g(x)$
satisfies that $D^2 g(x)\geq 0$ (that is, if the initial
hypersurface $\Gamma_{0}=\{x\in M: g(x)=c\}$ has nonnegative
Gaussian curvature) then it is reasonable, and consistent with the
classical motion of convex surfaces by their Gaussian curvature,
to assume that $D^2 u(t,x)\geq 0$ for all $(t,x)$ with $u(t,x)=c$
(that is, $\Gamma_t$ will have nonnegative Gaussian curvature as
long as it exists). In this case our equation becomes
    \begin{equation} \label{positiveGaussiancurvature}
\tag{\texttt{+GCE}}
   u_t-|Du|\textrm{det}_{+} \left( \frac{1}{|Du|}\left( I-\frac{Du \otimes Du}{|Du|^2}\right) D^2u +
\frac{Du \otimes Du}{|Du|^2}\right) =0,
    \end{equation}
where $\textrm{det}_{+}$ is defined by
$$\textrm{det}_{+}(A)=\prod_{j=1}^{n} \max\{\lambda_{j},0\}$$
if $\lambda_{1}, ..., \lambda_{n}$ are the eigenvalues of $A$.
That is, $$u_t+F(Du, D^2u)=0,$$ where
\begin{equation} \label{F for Gaussian curvature}
F(\zeta, A)=-|\zeta|\textrm{det}_{+} \left(\frac{1}{|\zeta|}
\left( I-\frac{\zeta \otimes \zeta}{|\zeta|^2}\right)A
+\frac{\zeta \otimes \zeta}{|\zeta|^2}\right).
\end{equation}

As in the case of the mean curvature, it is not difficult to see
that $F$ is elliptic and translation invariant, and that $F$ is
continuous off $\{\zeta=0\}$ (this time the singularities at
$\zeta=0$ are of higher order, as $F(\zeta, A)$ generally tends to
$\pm\infty$ as $\zeta$ goes to $0$).

\bigskip

\begin{ex}\label{mean curvature in arbitrary codimension} {\em \bf Motion by mean curvature in arbitrary codimension.}
\end{ex}

If $\Gamma_{0}$ is a $k$-codimensional surface of an
$n$-dimensional Riemannian manifold $M$, we choose a continuous
function $v_0$ with $\Gamma_0=v_{0}^{-1}(0)$, and consider
    $$
    u_t +F(Du, D^2 u)=0, \, \, \, u(0, x)=v_{0}(x),
    $$
where $$F(\zeta, A)=\sum_{i=1}^{d-k}\lambda_{i}(Q)$$ and $$
\lambda_{1}(Q)\leq \lambda_{2}(Q)\leq ... \leq\lambda_{d-1}(Q)
$$ are the eigenvalues of $Q:=S_{\zeta}A
S_{\zeta}$, with
    $$
    S_\zeta :=\left(
I-\frac{\zeta\otimes \zeta}{|\zeta|^2}\right),
$$
corresponding to eigenvectors orthogonal to $\zeta$ (note that
$\zeta$ is an eigenvector corresponding to the eigenvalue $0$ of
$Q$).

The same proof as in \cite{AS} shows that $F$ is elliptic, the key
observation is that
    $$
    \lambda_{i}(Q)=\max\{\min_{\eta\in E}\frac{\langle Q\eta, \eta\rangle}{|\eta|^{2}} \, : \,
    E\subset TM_x, \textrm{ codim}(E)\leq i-1\}.
    $$
On the other hand, it is easy to see, as in Example \ref{example
mean curvature} above, that $F$ is translation invariant.

\bigskip

Our aim is to establish comparison, existence and uniqueness of
viscosity solutions to the general curvature evolution equation
\ref{curvatureevolutioneq}, and then to prove that the resulting
generalized motion is consistent with the corresponding classical
motion (whenever the latter exists). However, because this equation
is, in general, highly singular, one has to define very carefully
what a viscosity solution to \ref{curvatureevolutioneq} is at points
where $Du=0$. Here we will adapt Ishii-Souganidis' definition
\cite{IS} (see also \cite{Giga}) from the Euclidean to the
Riemannian setting. This requires a slight change in the definition
of the set of test functions $\varphi$.

\begin{defn}\label{def of the F class}
{\em Let $F:J_{0}^{2}(M) \rightarrow \mathbb R$ be continuous,
(degenerate) elliptic, translation invariant and geometric. Denote
by $\mathcal{F}=\mathcal{F}(F)$ the set of functions $f\in
C^{2}([0, \infty))$ such that $f(0)=f'(0)=f''(0)=0$ and $f''(s)>0$
for $s>0$ which satisfy \begin{equation}\label{limit property of
f, F} \lim_{|\zeta|\to 0}\frac{f'(|\zeta|)}{|\zeta|}F(\zeta, 2 I)=
\lim_{|\zeta|\to 0}\frac{f'(|\zeta|)}{|\zeta|}F(\zeta, -2I)=0.
\end{equation}
It is clear that $\mathcal{F}$ is a cone (that is,
$f+g\in\mathcal{F}$ and $\lambda f\in\mathcal{F}$ whenever $f, g\in
\mathcal{F}, \lambda\in [0, \infty$)).}
\end{defn}

\begin{prop}
If $M$ is compact and $F:J_{0}^{2}(M) \rightarrow \mathbb R$ is
continuous, elliptic, translation invariant, and geometric, then
$\mathcal{F}(F)\neq\emptyset$.
\end{prop}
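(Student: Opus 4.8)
The plan is to construct a single function $f\in\mathcal{F}(F)$ by hand, exploiting compactness of $M$ to get uniform control on $F$ near $\{\zeta=0\}$. First I would observe that, since $M$ is compact, the unit cotangent bundle $\{\zeta\in TM^*:|\zeta|=1\}$ is compact, so the function $(\zeta,A)\mapsto F(\zeta,A)$ is bounded on the compact set $\{|\zeta|=1\}\times\{-2I,2I\}$; call this bound $C_0$. By the geometricity property (\textbf{D}) with $\lambda=|\zeta|$, for any $\zeta\neq 0$ one has
\begin{equation*}
F(\zeta,\pm 2I)=|\zeta|\,F\!\left(\tfrac{\zeta}{|\zeta|},\pm\tfrac{2}{|\zeta|}I\right),
\end{equation*}
but this still has the factor $1/|\zeta|$ inside, so geometricity alone does not bound $F(\zeta,\pm 2I)/|\zeta|$. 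Instead I would use the representation \eqref{FandG}: since $\left(I-\frac{\zeta\otimes\zeta}{|\zeta|^2}\right)(\pm 2I)=\pm 2\left(I-\frac{\zeta\otimes\zeta}{|\zeta|^2}\right)$, we get
\begin{equation*}
F(\zeta,\pm 2I)=|\zeta|\,G\!\left(\tfrac{\zeta}{|\zeta|},\ \pm\tfrac{2}{|\zeta|}\left(I-\tfrac{\zeta\otimes\zeta}{|\zeta|^2}\right)\right),
\end{equation*}
and I would instead define the auxiliary quantity $m(r)=\sup\{|F(\zeta,\pm 2I)|:\ \zeta\in TM^*,\ |\zeta|=r\}$ for $r>0$, which is finite for each $r$ by compactness and continuity of $F$ on $J_0^2(M)$.

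Next I would pick $f$ of the form $f(s)=\int_0^s\!\int_0^\tau h(\sigma)\,d\sigma\,d\tau$ where $h\in C^0([0,\infty))$, $h(0)=0$, $h>0$ on $(0,\infty)$, so that $f\in C^2$, $f(0)=f'(0)=f''(0)=0$, $f''=h>0$ on $(0,\infty)$, and $f'(s)=\int_0^s h(\sigma)\,d\sigma$. Then
\begin{equation*}
\frac{f'(|\zeta|)}{|\zeta|}\,|F(\zeta,\pm 2I)|\ \le\ \frac{f'(|\zeta|)}{|\zeta|}\,m(|\zeta|),
\end{equation*}
so it suffices to choose $h$ so that $\frac{1}{r}\int_0^r h(\sigma)\,d\sigma\cdot m(r)\to 0$ as $r\downarrow 0$. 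Since $\frac1r\int_0^r h\le \sup_{[0,r]}h=:\bar h(r)$ and $\bar h(r)\to 0$ as $r\downarrow 0$ (as $h$ is continuous with $h(0)=0$), I just need $\bar h(r)\,m(r)\to 0$. I would achieve this by choosing $h$ increasing near $0$ with $h(r)\le 1/\bigl(1+m(r)+\sup_{[0,r]}m\bigr)$ say — more carefully, set $\mu(r):=1+\sup_{0<\rho\le r}m(\rho)$, which is finite (again by the finiteness of $m$ and an easy argument that $m$ is locally bounded, using continuity of $F$ and compactness), nondecreasing, and let $h(r):=r/\mu(r)$ for small $r$, extended to a positive continuous function on $(0,\infty)$. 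Then $\bar h(r)\le r/\mu(r)$ hence $\bar h(r)\,m(r)\le r\cdot m(r)/\mu(r)\le r\to 0$.

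The main obstacle is verifying that $m(r)$ — equivalently $\sup_{|\zeta|=r}|F(\zeta,\pm 2I)|$ — is finite and behaves controllably as $r\downarrow 0$; this is exactly where compactness of $M$ enters, because it makes each sphere $\{|\zeta|=r\}\subset TM^*$ compact and $F$ is continuous there (note $r>0$ keeps us inside $J_0^2(M)$, away from the singular set). Once $m$ is locally bounded near $0$, the construction of $h$ (and hence $f$) is routine bookkeeping, and one checks directly that $f$ satisfies all the defining conditions of $\mathcal{F}(F)$, so $\mathcal{F}(F)\neq\emptyset$. The only subtlety to state carefully is that $m$ need not be continuous or monotone a priori, which is why I replace it by its nondecreasing majorant $\mu$ before defining $h$.
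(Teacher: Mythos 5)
There is a genuine gap, and it occurs exactly where the proposition has content. Your quantity $\mu(r):=1+\sup_{0<\rho\le r}m(\rho)$ is in general $+\infty$. The function $F$ is only assumed continuous on $J_0^2(M)$, i.e.\ off the zero section, so while each sphere $\{|\zeta|=r\}$ ($r>0$ fixed) is compact and $m(r)$ is finite, the union $\{0<|\zeta|\le r\}$ is \emph{not} a compact subset of $J_0^2(M)$ and $F$ need not be bounded there. "Locally bounded" only gives you bounds on compact subintervals of $(0,\infty)$, not on $(0,r]$. This is not a pathological worry: for the positive Gaussian curvature flow \eqref{F for Gaussian curvature} one computes $F(\zeta,2I)=-2^{n-1}|\zeta|^{2-n}$, so $m(\rho)\to\infty$ as $\rho\downarrow 0$ whenever $n\ge 3$, and the paper explicitly flags this blow-up. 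When $m$ does happen to be bounded near $0$, condition \eqref{limit property of f, F} is satisfied by $f(r)=r^4$ and there is nothing to prove; so your construction only covers the trivial case, and in the general case $\mu\equiv+\infty$ forces $h\equiv 0$ near $0$, violating $f''>0$ on $(0,\infty)$.

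The repair is to majorize $m$ by a \emph{finite-valued} continuous function $c:(0,\infty)\to(0,\infty)$ that is allowed to tend to $+\infty$ as $r\downarrow 0$ (this exists because $m$ is finite at each $r>0$ and locally bounded on $(0,\infty)$), and then to put the blow-up in the denominator with one extra power of $r$: take $f'(r)=r^2/c(r)$, so that $\frac{f'(r)}{r}\,|F(\zeta,\pm 2I)|\le \frac{r}{c(r)}\,c(r)=r\to 0$ regardless of how fast $c$ blows up. This is the paper's route; the remaining (real but routine) work there is to smooth $c$ so that $f''=2r/c+r^2(1/c)'>0$ and $f''(0^+)=0$, which is arranged by requiring $(1/c)'>0$ and $(1/c)'(0^+)=0$. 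A secondary remark: even granting finiteness of $\mu$, your inequality $\bar h(r)\le r/\mu(r)$ is unjustified, since $\sigma\mapsto\sigma/\mu(\sigma)$ need not be nondecreasing when $\mu$ is merely nondecreasing; but this point is moot because finiteness of $\mu$ already puts you in the trivial bounded case.
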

\begin{proof}
One can adapt the proof given in \cite[p. 229]{IS} for the case
$M=\mathbb{R}^{n}$. The only difference (apart from the replacement
of $I$ with $2I$) is that $|\zeta|=|\zeta|_{x}$ depends on the point
$x$ such that $\zeta\in TM_{x}$, and one has to be cautious about
this dependence (as a matter of fact, that is why we require
compactness of $M$). Let us give the essential details for the
reader's convenience.

Since $F$ is continuous on $J_{0}^{2}(M)$ and the sets
$\{(\zeta_{x}, \pm 2I) : |\zeta_{x}|_{x}=1, x\in M\}$ are compact
in $J_{0}^{2}(M)$, there exists a continuous function
$c:(0,\infty)\to (0, \infty)$ such that
$$
-c(|\zeta|)\leq F(\zeta, 2I)\leq F(\zeta, -2I)\leq c(|\zeta|)
$$
for all $\zeta\in TM^*\setminus \{0_x: \, x \in M\}$. Without loss
of generality one can then assume that $c$ is $C^1$ on $(0, \infty)$
and satisfies $(1/c)'>0$ in $(0,1]$, $\lim_{r\to 0^+} c(r)=\infty$,
and $\lim_{r\to 0^+}(1/c)'(r)=0$. Then it is not difficult to show
that an appropriate extension to $[0, \infty)$ of the function $f$
defined on $[0,1]$ by
$$f(r)=\left\{
                                                \begin{array}{ll}
                                                  \int_{0}^{r}\frac{s^{2}}{c(s)} ds, & \hbox{ if $0<r\leq 1$;} \\
                                                  0, & \hbox{ if $r=0$,}
                                                \end{array}
                                              \right.
$$ belongs to $\mathcal{F}(F)$.
\end{proof}

\medskip

For many interesting choices of the function $F$ it is easy to
show that $\mathcal{F}(F)\neq\emptyset$ without requiring $M$ to
be compact:

\begin{ex}
{\em If $F$ is given by \eqref{meancurvature2} (corresponding to the
mean curvature evolution equation), then we may take
$f\in\mathcal{F}(F)$ of the form
$$f(t)=t^4.$$ On the other hand, when $F$ is associated to the Gaussian curvature
evolution equation (that is, $F$ is given by \eqref{F for Gaussian
curvature}) then
$$f(t)=t^{2n}$$
belongs to $\mathcal{F}(F)$ (here $n$ is the dimension of $M$).}
\end{ex}

\begin{defn}\label{def of the A(F) class}
{\em We define the set $\mathcal{A}(F)$ of {\em admissible test
functions} for the equation \eqref{curvatureevolutioneq} as the
set of all functions $\varphi\in C^{2}((0, T)\times M)$ such that,
for every $z_{0}=(t_{0}, x_{0})\in (0,T)\times M$ with
$D\varphi(z_{0})=0$ there exist some $\delta>0$,
$f\in\mathcal{F}$, $w\in C([0, \infty))$ satisfying $\lim_{r\to
0^{+}}w(r)/r=0$ and
    $$
    \left| \varphi(z)-\varphi(z_{0})-\varphi_{t}(z_0)(t-t_{0}) \right|
    \leq f(d(x,x_{0}))+w(|t-t_{0}|)
    $$
for all $z=(t,x)\in B(z_{0}, \delta)$. }
\end{defn}
Notice that in particular, for all $\varphi\in\mathcal{A}(F)$ we
have that $$D\varphi(z)=0\implies D^{2}\varphi(z)=0.$$

\begin{prop}\label{AF is dense}
If $M$ is a compact Riemannian manifold then the class
$\mathcal{A}(F)$ of admissible test functions is dense in the
space $\mathcal{C}(M)$ of continuous functions on $M$.
\end{prop}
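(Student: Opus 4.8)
The plan is to reduce first to approximating smooth functions (dense in $\mathcal{C}(M)$ by the usual mollification argument), and then, given $g\in C^{\infty}(M)$ and $\varepsilon>0$, to produce an admissible $\tilde g$ with $\|\tilde g-g\|_{\infty}<\varepsilon$ of the form $\tilde g=\theta\circ g$ for a carefully chosen nondecreasing $\theta\in C^{2}(\mathbb{R})$. Composing with $\theta$ is the right move because $D(\theta\circ g)=\theta'(g)\,Dg$, so the critical set of $\tilde g$ sits inside $\{Dg=0\}\cup\{x:\theta'(g(x))=0\}$; hence if $\theta$ is \emph{locally constant} on a neighbourhood of the set $C:=g(\{Dg=0\})$ of critical values of $g$ — which is compact (since $M$ is) and Lebesgue-null by Sard's theorem — then $\tilde g$ will be locally constant near every such $x_{0}$ with $Dg(x_{0})=0$, or more generally with $g(x_{0})$ interior to the constant locus of $\theta$, and those points are harmless for admissibility. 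Here I will also use that $\mathcal{F}(F)\neq\emptyset$ (the preceding Proposition, exactly where compactness of $M$ enters): fix $f_{0}\in\mathcal{F}(F)$, and set $L:=1+\sup_{M}|Dg|<\infty$, so that $|g(x)-g(y)|\le L\,d(x,y)$ for all $x,y$ (integrate $Dg$ along a minimizing geodesic).

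Concretely, I would pick an open set $U\supseteq C$ that is a finite union of bounded intervals, with $\partial U\cap C=\emptyset$ (possible because $C$ is compact and null) and $|U|<\varepsilon$, so the endpoints of $U$ are regular values of $g$. Let $\theta$ be constant on each component of $U$ and, near each transition value $s_{0}\in\partial U$, put $\theta(s)=\theta(s_{0})$ on the side of $s_{0}$ lying in $U$ and $\theta(s)=\theta(s_{0})\mp Kf_{0}\!\big(|s-s_{0}|/L\big)$ (sign chosen so that $\theta$ is nondecreasing, $K>0$ small, $s$ near $s_{0}$) on the other side; since $f_{0}(0)=f_{0}'(0)=f_{0}''(0)=0$ this glues to a $C^{2}$ function with $\theta'(s_{0})=\theta''(s_{0})=0$ and $\theta'>0$ strictly off $\overline U$. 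On the remaining compact pieces of $\mathbb{R}\setminus U$ one interpolates with $\theta'>0$ so that $\theta$ is globally $C^{2}$, nondecreasing, and constant off a bounded set; since $\theta'=0$ on $U$ (of measure $<\varepsilon$) and can be taken close to $1$ elsewhere, $\theta(s)-s=\int(\theta'-1)$ yields $\|\theta-\mathrm{id}\|_{L^{\infty}[\min g,\max g]}<\varepsilon$, hence $\|\tilde g-g\|_{\infty}<\varepsilon$ with $\tilde g:=\theta\circ g\in C^{2}(M)$.

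It then remains to check that $\tilde g$, viewed as the time-independent test function $(t,x)\mapsto\tilde g(x)$ (so $\varphi_{t}\equiv0$), belongs to $\mathcal{A}(F)$. Suppose $D\tilde g(x_{0})=0$. If $g(x_{0})\in U$ — in particular whenever $Dg(x_{0})=0$, since then $g(x_{0})\in C\subseteq U$ — then continuity of $g$ makes $\tilde g$ constant on a neighbourhood of $x_{0}$, so the inequality of Definition \ref{def of the A(F) class} holds trivially with any $f\in\mathcal{F}(F)$ and $w\equiv0$. Otherwise $\theta'(g(x_{0}))=0$ and $Dg(x_{0})\neq0$, which forces $g(x_{0})=s_{0}\in\partial U$; for $x$ near $x_{0}$ we have $|g(x)-g(x_{0})|\le L\,d(x,x_{0})$, and the explicit form of $\theta$ near $s_{0}$ together with monotonicity of $f_{0}$ gives
$$
|\tilde g(x)-\tilde g(x_{0})|=\big|\theta(g(x))-\theta(s_{0})\big|\le Kf_{0}\!\Big(\tfrac{|g(x)-g(x_{0})|}{L}\Big)\le Kf_{0}\big(d(x,x_{0})\big),
$$
so the required estimate holds with $f:=Kf_{0}\in\mathcal{F}(F)$ (a cone) and $w\equiv0$. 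Hence $\tilde g\in\mathcal{A}(F)$, and combined with the density of $C^{\infty}(M)$ this proves the proposition.

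I expect the transition points $x_{0}$ (those with $\theta'(g(x_{0}))=0$ but $Dg(x_{0})\neq0$) to be the main obstacle: these are genuine critical points of $\tilde g$ at which $\tilde g$ is \emph{not} flat simply because $g$ happens to be flat there, and the whole reason for tapering $\theta$ into its constant pieces at the $\mathcal{F}$-rate $f_{0}(\,\cdot/L)$ — rather than, say, polynomially — is precisely to force the admissibility estimate there without any knowledge of how fast members of $\mathcal{F}(F)$ decay. The only other slightly delicate point is arranging $U$ simultaneously to contain $C$, to have small Lebesgue measure, and to have boundary disjoint from $C$, which is exactly what Sard's theorem provides.
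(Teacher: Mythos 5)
Your construction is correct, but it is a genuinely different argument from the one in the paper. The paper disposes of this proposition in one line, by asserting that (the time-independent part of) $\mathcal{A}(F)$ satisfies the hypotheses of the Stone--Weierstrass theorem; you instead build an explicit approximant of the form $\theta\circ g$ for smooth $g$, using Sard's theorem to make $\theta$ locally constant near the critical values of $g$ and tapering $\theta$ into its constant plateaus at the rate of a fixed $f_{0}\in\mathcal{F}(F)$, so that every critical point of $\theta\circ g$ is either a local-constancy point or a transition point where the $f_{0}$-flatness is built in by hand. What your route buys is that you never have to verify any algebraic closure property of $\mathcal{A}(F)$: the Stone--Weierstrass approach needs $\mathcal{A}(F)$ to be a subalgebra, and closure under sums and products is actually delicate here, since a sum or product of two admissible functions can acquire new critical points at which it is only \emph{quadratically} flat, which need not satisfy the flatness demanded by $\mathcal{F}(F)$ for strongly singular $F$ (e.g.\ the Gaussian-curvature case, where one takes $f(t)=t^{2n}$); your argument sidesteps this entirely, at the cost of being longer, and it only uses $\mathcal{F}(F)\neq\emptyset$ together with Sard's theorem and the Lipschitz bound $|g(x)-g(y)|\le L\,d(x,y)$ on the compact $M$ (compactness enters exactly where it does in the paper, through $\mathcal{F}(F)\neq\emptyset$, and also through $L<\infty$). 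Two cosmetic points: your simultaneous requirements that $\theta'>0$ off $\overline{U}$ and that $\theta$ be constant off a bounded set are formally inconsistent, but harmlessly so — only the values of $\theta$ on the compact range $[\min g,\max g]$ matter, so arrange the outer constancy (or any further zeros of $\theta'$) to occur outside that range, or give such outer transition points the same $f_{0}$-taper; and the fact that $\partial U\cap C=\emptyset$ is automatic once $C\subseteq U$ with $U$ open, so no extra care from Sard is needed there beyond $|U|<\varepsilon$.
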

\begin{proof}
It is not difficult to check that the class $\mathcal{A}(F)$
satisfies the hypotheses of the Stone-Weierstrass theorem.
\end{proof}

\begin{defn}
{\em We will say that an upper semicontinuous function
$u:[0,T)\times M\to\mathbb{R}$ is a viscosity subsolution of
\eqref{curvatureevolutioneq} provided that, for every
$\varphi\in\mathcal{A}(F)$ and every maximum point $z=(t,x)$ of
$u-\varphi$, we have
    $$
  \begin{cases}
    \varphi_{t}+F(D\varphi(z), D^{2}\varphi(z))\leq 0 & \text{ if } D\varphi(z)\neq 0, \\
    \varphi_{t}(z)\leq 0 & \text{otherwise}.
  \end{cases}
    $$
Similarly, we will say that a lower semicontinuous function
$u:[0,T)\times M\to\mathbb{R}$ is a viscosity supersolution of
\eqref{curvatureevolutioneq} if, for every
$\varphi\in\mathcal{A}(F)$ and every minimum point $z=(t,x)$ of
$u-\varphi$, we have
    $$
  \begin{cases}
    \varphi_{t}+F(D\varphi(z), D^{2}\varphi(z))\geq 0 & \text{ if } D\varphi(z)\neq 0, \\
    \varphi_{t}(z)\geq 0 & \text{otherwise}.
  \end{cases}
    $$
A viscosity solution of \eqref{curvatureevolutioneq} is a continuous
function $u:[0,T)\times M\to\mathbb{R}$ which is both a viscosity
subsolution and a viscosity supersolution of
\eqref{curvatureevolutioneq}. }
\end{defn}

In \cite{Giga} this kind of solution is called an
$\mathcal{F}$-solution, but here we will simply call it a solution.
It is clear that one can always assume that the minimum or maximum
in these definitions are strict.

Notice that the set of test functions $\varphi$ we are using is
smaller than the standard one in the general theory of viscosity
solutions, and that we here require that $\varphi$ is $C^2$ with
respect to the variables $t$ and $x$ (while in the usual definition
of the parabolic semijets one demands $C^1$ differentiability with
respect to $t$ and $C^2$ differentiability with respect to $x$).

It is easy to check that this definition is consistent with $u$
being a classical solution. Indeed, if $u$ is a classical solution
then we have $Du(z)\neq 0$ and $u_{t}(z)+F(Du(z), D^2u(z))=0$ for
all $z$. Then, if $\varphi\in\mathcal{A}(F)$ is such that
$u-\varphi$ attains a minimum at $z$, we have
$\varphi_{t}(z)=u_{t}(z)$, $D\varphi(z)=Du(z)\neq 0$, and
$D^{2}u(z)\geq D^{2}\varphi(z)$. Since $F$ is elliptic we get
    $$
    \varphi_{t}(z)+F(D\varphi(z), D^{2}\varphi(z))\geq
    u_{t}(z)+F(Du(z), D^{2}u(z))=0,
    $$
that is, $u$ is a supersolution at $z$. A similar argument shows
that $u$ is a subsolution.

It can be proved, as in the Euclidean case \cite{Giga}, that if the
lower and upper semicontinuous envelopes of $F$ (denoted by
$\underline{F}$ and $\overline{F}$ respectively) are finite and
$\underline{F}(0,0)=\overline{F}(0,0)=0$, then every standard
viscosity solution is an $\mathcal{F}$-solution, and conversely.
This is the case of the $F$ associated to the mean curvature
evolution.

\bigskip

\section{Some technical tools}

In this section we collect some rather technical results that will
be needed in the proof of the main comparison theorem.

First, we will need to use the following variant of the maximum
principle for semicontinuous functions already used in \cite{AFS},
which we restate here for the reader's convenience.
\begin{thm}\label{key to comparison}
Let $M_1 , ..., M_k$ be Riemannian manifolds, and $\Omega_i\subset
M_i$ open subsets. Define $\Omega=\Omega_1\times \ldots\times
\Omega_n\subset M_1\times\ldots\times M_k=M$. Let $u_i$ be upper
semicontinuous functions on $\Omega_i$, $i=1,...,k$; let $\varphi$
be a $C^2$ smooth function on $\Omega$ and set
    $$\omega(x)=u_1(x_1)+\ldots+u_n(x_k)$$
for $x=(x_1,...,x_k)\in\Omega$. Assume that
$(\hat{x}_1,\ldots,\hat{x}_k)$ is a local maximum of
$\omega-\varphi$. Then, for each $\varepsilon>0$ there exist
bilinear forms $B_i \in
\mathcal{L}^{2}_{s}((TM_{i})_{\hat{x}_{i}})$,  $i=1, ..., k$, such
that
$$\left(D_{x_{i}}\varphi(\hat{x}),B_i \right)\in \overline{J}^{\, 2, +}u_i(\hat{x}_i)$$
for $i=1,...,k$, and the block diagonal matrix with entries $B_i$
satisfies
    $$-\left({1\over\varepsilon}+\|A\|\right)I\leq
    \left(
\begin{array}{ccc}
  B_1   & \ldots &   0    \\
 \vdots & \ddots & \vdots \\
   0    & \ldots &  B_k   \\
\end{array}\right)\leq A+\varepsilon A^2,$$
where $A=D^2\varphi(\hat{x})\in \mathcal{L}_s^2(TM_{\hat{x}})$.
\end{thm}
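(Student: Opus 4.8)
The plan is to reduce the statement to the Euclidean \emph{theorem on sums} for semicontinuous functions (see \cite[Theorem~3.2]{CIL}) by transporting everything to geodesic normal coordinates centered at the points $\hat{x}_i$; this is, in essence, the argument of \cite{AFS}, which we only sketch here.

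First I would fix, for each $i=1,\dots,k$, the exponential chart $\exp_{\hat{x}_i}:V_i\to M_i$, where $V_i$ is a small ball about $0$ in the Euclidean space $(TM_i)_{\hat{x}_i}$ (Euclidean with respect to the Riemannian inner product of $M_i$ at $\hat{x}_i$), and set $\tilde{u}_i=u_i\circ\exp_{\hat{x}_i}$, which is upper semicontinuous on $V_i$, together with $\tilde{\varphi}=\varphi\circ E$, where $E=\exp_{\hat{x}_1}\times\cdots\times\exp_{\hat{x}_k}$; then $\tilde{\varphi}$ is $C^2$ on $V_1\times\cdots\times V_k$. Writing $\tilde{\omega}(y)=\tilde{u}_1(y_1)+\cdots+\tilde{u}_k(y_k)$, the fact that $E$ is a diffeomorphism with $E(0,\dots,0)=\hat{x}:=(\hat{x}_1,\dots,\hat{x}_k)$, together with the hypothesis that $\hat{x}$ is a local maximum of $\omega-\varphi$, shows that $0$ is a local maximum of $\tilde{\omega}-\tilde{\varphi}$. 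I would then apply the Euclidean theorem on sums to $\tilde{u}_1,\dots,\tilde{u}_k$, $\tilde{\varphi}$ and $0$: for each $\varepsilon>0$ this yields symmetric bilinear forms $B_i$ on $(TM_i)_{\hat{x}_i}$ with $\left(D_{y_i}\tilde{\varphi}(0),B_i\right)\in\overline{J}^{\, 2, +}\tilde{u}_i(0)$ (the Euclidean closed second-order superjet) such that the block diagonal matrix with entries $B_i$ satisfies
$$-\left(\frac{1}{\varepsilon}+\|\tilde{A}\|\right)I\leq\left(\begin{array}{ccc} B_1 & \ldots & 0 \\ \vdots & \ddots & \vdots \\ 0 & \ldots & B_k \end{array}\right)\leq\tilde{A}+\varepsilon\tilde{A}^{2},$$
where $\tilde{A}=D^{2}\tilde{\varphi}(0)$ denotes the ordinary Hessian.

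The only point requiring genuine care is to check that these Euclidean data at $0$ coincide with the Riemannian data at $\hat{x}$. This rests on the remark that the product chart $E$ is a geodesic normal coordinate chart, centered at $\hat{x}$, for the product manifold $M=M_1\times\cdots\times M_k$ with its product metric (a product of normal charts is a normal chart for the product metric). Consequently $DE(0)$ is the identity under the identifications $TM^{*}_{x}\cong TM_{x}$, so $D_{y_i}\tilde{\varphi}(0)=D_{x_i}\varphi(\hat{x})$; the Christoffel symbols of the product metric vanish at $\hat{x}$, so $\tilde{A}=D^{2}\tilde{\varphi}(0)$ equals the Riemannian Hessian $A=D^{2}\varphi(\hat{x})$, and the operator norm $\|A\|$ is unambiguous because the inner product defining it on $TM_{\hat{x}}$ is exactly the one with respect to which the Euclidean statement was applied; finally, in normal coordinates the metric equals $\delta+O(|y|^{2})$, so a $C^{2}$ function touching $u_i$ from above at $\hat{x}_i$, together with its first- and second-order data, transfers to and from the chart unchanged, and hence $\overline{J}^{\, 2, +}\tilde{u}_i(0)$ is identified with the Riemannian superjet $\overline{J}^{\, 2, +}u_i(\hat{x}_i)$. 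Regarding the $B_i$ above as bilinear forms on $(TM_i)_{\hat{x}_i}$ then gives the theorem, all three inequalities being preserved under the isometric identification $(TM_i)_{\hat{x}_i}\cong\R^{n_i}$.

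The main obstacle I anticipate is precisely this last identification of Hessians and second-order jets under the exponential chart: one must make sure that passing to $\exp_{\hat{x}_i}$ does not distort first- or second-order information at the base point, which is exactly the vanishing of the Christoffel symbols in normal coordinates plus the observation that a product of normal charts is normal for the product metric. Once this is in place, the remainder is bookkeeping.
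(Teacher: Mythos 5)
Your overall route --- transport everything through the exponential charts at the $\hat x_i$, apply the Euclidean theorem on sums of \cite{CIL}, and identify first- and second-order data at the center of normal coordinates --- is exactly the approach the paper takes (the statement itself is quoted from \cite{AFS}, and the paper proves its parabolic analogue, Theorem \ref{keyparabolic}, by precisely this reduction). The identifications $D_{y_i}\tilde\varphi(0)=D_{x_i}\varphi(\hat x)$ and $D^{2}\tilde\varphi(0)=D^{2}\varphi(\hat x)$, and the remark that a product of normal charts is a normal chart for the product metric, are all fine.

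The one step where your justification does not prove what you need is the identification of the \emph{closed} superjets. You argue that a $C^{2}$ function touching $u_i$ from above \emph{at} $\hat x_i$ carries the same first- and second-order data in the chart, and conclude ``hence $\overline{J}^{\,2,+}\tilde u_i(0)$ is identified with $\overline{J}^{\,2,+}u_i(\hat x_i)$''. But $\overline{J}^{\,2,+}u_i(\hat x_i)$ is not obtained from jets at $\hat x_i$ alone: its elements are limits of jets of $u_i$ at nearby points $x_k\to\hat x_i$, and at those points $\exp_{\hat x_i}$ is \emph{not} a normal chart, so second-order data is distorted. Concretely, if $\psi=\phi\circ\exp_{\hat x_i}$, then at $y\neq 0$ the Hessian of $\psi$ differs from the transported Riemannian Hessian of $\phi$ by a correction of the form $\langle\nabla\phi,\sigma_y''(0)\rangle$ (this is Lemma \ref{exp}, i.e.\ \cite[Lemma 2.7]{AFS}). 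The correct argument is therefore: jets at nearby points correspond up to an error bounded by $|D\phi|$ times a quantity which tends to $0$ as $y\to 0$ (because $\sigma''$ vanishes at the center and depends continuously on the base point --- this is where your observation that the metric is $\delta+O(|y|^{2})$ actually enters), so the error disappears in the limits defining the closed superjets. That is exactly the content of Lemma \ref{parabolic} / \cite[Proposition 2.8]{AFS}, which the paper isolates as a separate lemma; with that step supplied, your proof coincides with the paper's.
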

\noindent We recall that $$
    J^{2,+}f(x)=\{(D\varphi(x), D^{2}\varphi(x)) \, : \,
    \varphi\in C^{2}(M, \mathbb{R}), \,  f-\varphi \textrm{
    attains a local maximum at } x\},
    $$
and
$$
\overline{J}^{2,+}f(x)=\{(\zeta, A)\in
TM^{*}_{x}\times\mathcal{L}_{s}(TM_x)
     \, : \, \exists (x_{k},\zeta_{k}, A_{k})\in M\times
     TM^{*}_{x_{k}}\times\mathcal{L}_{s}(TM_{x_{k}})$$ $$  s.t. \,\, (\zeta_{k}, A_{k})\in
     J^{2,+}f(x_{k}), \,\, (x_{k}, f(x_{k}), \zeta_{k}, A_{k})\to (x,f(x),\zeta, A)\},
     $$
see \cite{AFS}.

Another important ingredient of the proof of our main comparison
result is the following Proposition, established in
\cite[Proposition 3.3]{AFS}.

\begin{prop}\label{bound for A with no restriction on curvature}
Consider the function $\Psi(x,y) = d(x,y)^{2}$, defined on
$M\times M$. Assume that the sectional curvature $K$ of $M$ is
bounded below, say $K\geq -K_{0}$. Then
    $$
    D^{2}_{x,y}\Psi(x,y)(v, L_{xy}v)^{2}\leq 2 K_{0}d(x,y)^{2} \|v\|^{2}
    $$
for all $v\in TM_{x}$ and $x,y\in M$ with $d(x,y)<\min\{ i_{M}(x),
i_{M}(y)\}$.

In particular, if $-K_{0}\geq 0$ (that is $M$ has nonnegative
sectional curvature) one has that the restriction of
$D^{2}_{x,y}\Psi(x,y)$ to the subspace $\mathcal{D}=\{(v, L_{xy}v)
: v\in TM_{x}\}$ of $TM_{x}\times TM_{y}$ is negative
semidefinite.
\end{prop}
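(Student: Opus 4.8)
The plan is to control the Hessian of $\Psi$ at $(x,y)$ in the ``diagonal'' direction $(v,L_{xy}v)$ by a comparison argument: evaluate $\Psi$ along the geodesic $c(s):=(\exp_x(sv),\exp_y(sL_{xy}v))$ of the product manifold $M\times M$, bound $\Psi\circ c$ from above by the energy of a cleverly chosen family of curves joining the moving endpoints $\exp_x(sv)$ and $\exp_y(sL_{xy}v)$, and then read off the estimate from the classical second variation of energy formula, the hypothesis $K\ge-K_0$ entering only through the curvature term.

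First I would fix the minimizing geodesic $\gamma:[0,1]\to M$ from $x$ to $y$ (unique because $d(x,y)<\min\{i_M(x),i_M(y)\}$), parametrized with constant speed $|\gamma'|\equiv d(x,y)$, and let $V$ be the parallel field along $\gamma$ with $V(0)=v$, so $V(1)=L_{xy}v$ and $|V|\equiv\|v\|$. Then set $\gamma(s,t):=\exp_{\gamma(t)}(sV(t))$ and $\mathcal E(s):=\int_0^1|\partial_t\gamma(s,t)|^2\,dt$. Since for each $s$ the curve $t\mapsto\gamma(s,t)$ joins $\exp_x(sv)$ to $\exp_y(sL_{xy}v)$ over $[0,1]$, Cauchy--Schwarz gives $\Psi(c(s))\le\mathcal E(s)$ for small $s$, with equality at $s=0$ because $\gamma_0=\gamma$ is a constant-speed minimizer; here one uses that $\Psi$ is smooth near $(x,y)$, which follows from $d(x,y)<\min\{i_M(x),i_M(y)\}$ and the continuity of $x\mapsto i_M(x)$. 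Hence $s\mapsto\Psi(c(s))-\mathcal E(s)$ has an interior maximum at $s=0$, so its second derivative there is $\le 0$; and since $c$ is a product of geodesics, hence a geodesic of $M\times M$, one has $\frac{d^2}{ds^2}\big|_{0}\Psi(c(s))=D^2_{x,y}\Psi(x,y)(v,L_{xy}v)^2$, so that
\[
D^2_{x,y}\Psi(x,y)(v,L_{xy}v)^2\le\mathcal E''(0).
\]

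Next I would compute $\mathcal E''(0)$ from the second variation of energy. The two terms of that formula that normally survive for a geodesic base curve, namely $\int_0^1|\nabla_tV|^2\,dt$ and the endpoint term $\langle\nabla_s\partial_s\gamma,\partial_t\gamma\rangle\big|_0^1$, both vanish here thanks to the choice of variation: $V$ is parallel, so $\nabla_tV\equiv0$, and $s\mapsto\exp_{\gamma(t)}(sV(t))$ is a geodesic for each fixed $t$, so $\nabla_s\partial_s\gamma\equiv0$. What remains is $\frac12\mathcal E''(0)=-\int_0^1\langle R(V,\gamma')\gamma',V\rangle\,dt$. The bound $K\ge-K_0$ gives $\langle R(V,\gamma')\gamma',V\rangle\ge-K_0\big(|V|^2|\gamma'|^2-\langle V,\gamma'\rangle^2\big)\ge-K_0|V|^2|\gamma'|^2$, and, using $|V|\equiv\|v\|$ and $|\gamma'|\equiv d(x,y)$, this yields $\mathcal E''(0)\le2K_0\,d(x,y)^2\|v\|^2$, which together with the displayed inequality proves the first assertion. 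For the second, when $M$ has nonnegative sectional curvature one has $K_0\le0$, so already $D^2_{x,y}\Psi(x,y)(v,L_{xy}v)^2\le2K_0\,d(x,y)^2\|v\|^2\le0$ for every $v\in TM_x$; since $D^2_{x,y}\Psi(x,y)$ is symmetric and $\mathcal D=\{(v,L_{xy}v):v\in TM_x\}$, this is exactly the claimed negative semidefiniteness of $D^2_{x,y}\Psi(x,y)$ on $\mathcal D$.

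The step I expect to be the main obstacle, beyond keeping the sign conventions for $R$ and $K$ consistent, is the bookkeeping in the second variation: one must check carefully that the particular variation $\gamma(s,t)=\exp_{\gamma(t)}(sV(t))$, parallel in $t$ and radial in $s$, annihilates \emph{both} the index-form term and the boundary term, leaving only the curvature integral, and at the same time that $\gamma_s$ really is an admissible competitor, so that $\mathcal E(s)$ genuinely dominates $\Psi(c(s))$, while the contact between the two at $s=0$ is of second order; the latter is precisely what lets the inequality be transferred from functions to Hessians. One could instead estimate the Hessian of $d^2$ directly via Jacobi fields along $\gamma$ and a Rauch-type comparison, but the variational route above is shorter and more transparent.
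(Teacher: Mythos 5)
The paper does not actually prove this proposition: it is quoted from \cite[Proposition 3.3]{AFS}, and the argument there is essentially the one you give — compare $\Psi$ along the product geodesic $c$ with the energy of the parallel-transported variation $\exp_{\gamma(t)}(sV(t))$ and invoke the second variation of energy, whose index and boundary terms vanish for this variation. Your write-up is correct; the only caveat is that the step $-K_{0}\bigl(|V|^{2}|\gamma'|^{2}-\langle V,\gamma'\rangle^{2}\bigr)\ge -K_{0}|V|^{2}|\gamma'|^{2}$ needs $K_{0}\ge 0$, so in the ``in particular'' clause you should simply take $K_{0}=0$ rather than allow $K_{0}<0$ (for $K_{0}<0$ the displayed estimate actually fails, e.g.\ for $v$ tangent to the minimizing geodesic, where the left-hand side is $0$; the proposition is implicitly stated with the convention $K_{0}\ge 0$).
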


We will also need the following auxiliary result.

\begin{lem}\label{lemma-m-alphas} Let $\phi \in USC(M)$,
$\psi \in LSC(M)$, $f\in\mathcal{F}(F)$, and
    $$m_{\alpha}:=\sup_{ M\times M}
    \left(\phi(x)-\psi(y)-\alpha\,f\left(d(x,y)^2\right)\right)$$
for $\alpha>0.$ Suppose $m_{\alpha}<\infty$ for large $\alpha$ and
let $(x_{\alpha}, y_{\alpha})$ be such that
    $$\lim_{\alpha\rightarrow \infty}\left(m_{\alpha}-(\phi(x_{\alpha})-
    \psi(y_{\alpha})-\alpha f(d(x_{\alpha},y_{\alpha})^2))\right)=0.$$ Then we have:
\begin{enumerate}
\item $\lim_{\alpha\rightarrow \infty}\alpha
                  f(d(x_{\alpha},y_{\alpha})^2)=0$, and \smallskip
\item if $\widehat{x}\in M$ is a
limit point of $x_{\alpha}$ as $\alpha\rightarrow\infty$ then
\[
\lim_{\alpha\rightarrow\infty}m_{\alpha}=\phi(\widehat{x})-\psi(\widehat{x})
=\sup_{x\in M}(\phi(x)-\psi(x)).
\]
\end{enumerate}
\end{lem}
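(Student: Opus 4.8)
The plan is to run the classical penalization argument of Crandall--Ishii--Lions (as in Lemma~3.1 of their User's Guide), with the Euclidean quadratic $|x-y|^2$ replaced by $f(d(x,y)^2)$. The only structural facts about $f\in\mathcal F(F)$ that I will use are $f(0)=f'(0)=0$ and $f''>0$ on $(0,\infty)$; the latter two give $f'(s)=\int_0^s f''(t)\,dt>0$ for $s>0$, so $f$ is strictly increasing on $[0,\infty)$, and therefore $f(s_n)\to 0$ forces $s_n\to 0$.

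First I would fix notation and record monotonicity. Put $M_0:=\sup_{x\in M}(\phi(x)-\psi(x))$, $d_\alpha:=d(x_\alpha,y_\alpha)$, and $\delta_\alpha:=m_\alpha-\bigl(\phi(x_\alpha)-\psi(y_\alpha)-\alpha f(d_\alpha^2)\bigr)$, so $\delta_\alpha\to 0$ by hypothesis. Taking $y=x$ in the supremum defining $m_\alpha$ and using $f(0)=0$ gives $M_0\le m_\alpha$ for all $\alpha$; since $f\ge 0$, the map $\alpha\mapsto m_\alpha$ is non-increasing; and $M_0$ is a finite real number (evaluate $\phi-\psi$ at a single point) while $m_\alpha<\infty$ for large $\alpha$ by hypothesis. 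Hence $L:=\lim_{\alpha\to\infty}m_\alpha$ exists, is finite, and $M_0\le L$.

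For assertion (1) I would use the doubling-of-the-parameter trick: for $\alpha$ large, $(x_\alpha,y_\alpha)$ is admissible in the supremum defining $m_{\alpha/2}$, so $\phi(x_\alpha)-\psi(y_\alpha)-\frac\alpha2 f(d_\alpha^2)\le m_{\alpha/2}$. Substituting the identity $\phi(x_\alpha)-\psi(y_\alpha)-\alpha f(d_\alpha^2)=m_\alpha-\delta_\alpha$ yields
\[
\tfrac\alpha2\, f(d_\alpha^2)\le m_{\alpha/2}-m_\alpha+\delta_\alpha .
\]
Since $m_{\alpha/2}\to L$, $m_\alpha\to L$ and $\delta_\alpha\to 0$, the right-hand side tends to $0$; as $f\ge 0$ this gives $\alpha f(d_\alpha^2)\to 0$, which is (1). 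In particular $f(d_\alpha^2)\to 0$, hence $d_\alpha\to 0$ by the monotonicity of $f$ noted above.

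For assertion (2), let $\alpha_k\to\infty$ with $x_{\alpha_k}\to\widehat x$; then $d_{\alpha_k}\to 0$ by (1), so $y_{\alpha_k}\to\widehat x$ as well. From $m_{\alpha_k}-\delta_{\alpha_k}\le \phi(x_{\alpha_k})-\psi(y_{\alpha_k})$, taking $\limsup_{k\to\infty}$ and using that $\phi$ and $-\psi$ are upper semicontinuous, I obtain $L\le\phi(\widehat x)-\psi(\widehat x)$. Combined with $\phi(\widehat x)-\psi(\widehat x)\le M_0\le L$, this forces equality throughout, i.e. $\lim_\alpha m_\alpha=\phi(\widehat x)-\psi(\widehat x)=\sup_{x\in M}(\phi(x)-\psi(x))$, which is (2). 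The argument is essentially routine; the only steps needing mild care are the implication $f(d_\alpha^2)\to 0\Rightarrow d_\alpha\to 0$ (where the defining properties of $f\in\mathcal F(F)$ enter, replacing the triviality of the Euclidean case) and the bookkeeping guaranteeing that $L$ is a genuine finite number, so that $m_{\alpha/2}-m_\alpha\to 0$ is legitimate. I do not anticipate any serious obstacle.
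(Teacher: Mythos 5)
Your proof is correct and is essentially the paper's own argument: the paper simply cites the penalization lemma of Crandall--Ishii--Lions and remarks that the same proof works in a general metric space, and what you have written out is exactly that proof, with the only genuinely new point (that $f\in\mathcal{F}(F)$ is strictly increasing with $f(0)=0$, so $f(d_\alpha^2)\to 0$ forces $d_\alpha\to 0$) handled correctly.
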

\begin{proof}
A more general form of this result is proved in \cite[Theorem
3.7]{CIL} in the case when $M$ is an Euclidean space, and the same
proof clearly works in a general metric space.
\end{proof}

Let us now define $\mathcal P^{2,+}, \, \mathcal P^{2,-}$,
$\mathcal{ \overline{P}}^{2,+}, $ and  $\mathcal{
\overline{P}}^{2,-}$,  the ``parabolic" variants of the semijets
$J^{2,+}, \,  J^{2,-}$, $ \overline{J}^{2,+}, \,
\overline{J}^{2,-}$ introduced in \cite{AFS} for functions defined
on a Riemannian manifold.

\begin{defn}
{\em Let $f:(0,T)\times M\to (-\infty, +\infty]$ be a lower
semicontinuous (LSC) function. We define the parabolic second
order subjet of $f$ at a point $(t_0,x_0)\in (0,T)\times M$ by
\begin{align*}
    \mathcal{P}^{2,-}f(t_0,x_0):=&\{(D_t \varphi(t_0,x_0), D_x\varphi(t_0,x_0)),
     D^{2}_x\varphi(t_0,x_0)) \, : \,
  \varphi \text{ is once continuously} \\  & \text{differentiable in }  t\in (0,T),
  \text{ twice continuously differentiable in } x\in M
 \\ & \text{and }  f-\varphi  \textrm{
    attains a local minimum at } (t_0,x_0)\}.
    \end{align*}
   Similarly, for an upper semicontinuous (USC) function $f:(0,T)\times M\to[-\infty,
 +\infty)$, we define the parabolic second order superjet of $f$ at $(t_0,x_0)$ by
 \begin{align*}
 \mathcal{P}^{2,+}f(t_0,x_0):= & \{(D_t \varphi(t_0,x_0), D_x\varphi(t_0,x_0)),
  D^{2}_x\varphi(t_0,x_0)) \, : \,
  \varphi \text{ is once continuously} \\  & \text{differentiable in }  t\in (0,T),
   \text{ twice continuously differentiable in } x\in M
   \\ & \text{and }
f-\varphi  \textrm{  attains a local maximum at } (t_0,x_0)\}.
  \end{align*}
}
\end{defn}

Observe that $\mathcal{P}^{2,-}f(t,x)$ and
$\mathcal{P}^{2,+}f(t,x)$ are subsets of $\mathbb R\times
TM^{*}_{x}\times\mathcal{L}^{2}_{s}(TM_{x})$. Notice that we can
assume that the  auxiliary functions $\varphi$ are defined on a
neighborhood of $(t_0,x_0)$. We may as well assume (just by adding
a function of the form $\pm \varepsilon d(x, x_{0})^4$) that the
minima or maxima in these definitions are strict. It is also
easily seeing that the min or max can always be supposed to be
global.
\begin{defn}
\emph{Let $f:(0,T)\times M\longrightarrow(-\infty,+\infty]$ be a
LSC function and $(t,x)\in(0,T)\times M$. We define
$\mathcal{\overline{P}}^{2,-}f(t,x)$ as the set of the
$(a,\zeta,A)\in\mathbb{R}\times TM_{x}^{\ast}\times
\mathcal{L}_{s}^{2}(TM_{x})$ such that there exist a sequence $(x_{k}%
,a_{k},\zeta_{k},A_{k})$ in $M\times\mathbb{R}\times TM_{x_{k}}^{\ast}%
\times\mathcal{L}_{s}^{2}(TM_{x_{k}})$ satisfying:}%
\[%
\begin{array}
[c]{ll}%
\text{\emph{i)}} & (a_{k},\zeta_{k},A_{k})\in\mathcal{P}^{2,-}f(t_{k}%
,x_{k}),\medskip\\
\text{\emph{ii)}} & \lim_{k}(t_{k},x_{k},f(t_{k},x_{k}),a_{k},\zeta_{k}%
,A_{k})=(t,x,f(t,x),a,\zeta,A).
\end{array}
\]
\end{defn}
\noindent The corresponding definition of $
\mathcal{\overline{P}}^{2,+}f(t,x)$ when $f$ is an upper
semicontinuous function is then clear.

\medskip

The next two lemmas are needed to establish the parabolic version
of the maximum principle we state below.

\begin{lem}[\cite{AFS}] \label{exp}
Let $U\subset M$ be an open subset, $(t,z)\in  (0,T)\times U$ and
a function \, $\varphi:(0,T)\times U \to \mathbb{R}$. Assume that
$\varphi$ is once continuously differentiable in $(0,T)$ and twice
continuously differentiable in $U$. Define
${\psi}(s,v)=\varphi(s,\exp_z v)$ on a neighborhood of $0\in
TM_{z}$. Let $\widetilde{V}$ be a vector field defined on a
neighbourhood of $0$ in $TM_z$, and consider the vector field
defined by $V(y)=D \exp_z(w_y)(\widetilde{V}(w_y))$ on a
neighbourhood of $z$, where $w_y:=\exp^{-1}_z(y)$, and let
 $$
    \sigma_{y}(r)=\exp_{z}(w_{y}+r\widetilde{V}(w_{y})).
    $$
Then we have that
    $$
    D^{2}_v\psi(\widetilde{V},\widetilde{V})(t,w_{y})=
    D^{2}_x\varphi(V,V)(t,y)+\langle \nabla_x\varphi(t,y),
    \sigma_{y}''(0)\rangle.
    $$

\noindent Observe that $\sigma_{z}''(0)=0$ so, when $y=z$, we obtain
    $$
    D^{2}_v\psi(t,0)=D^{2}_x\varphi(t,z).
    $$
\end{lem}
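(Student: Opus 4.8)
The plan is to work in the single chart given by $\exp_z$ and simply differentiate the identity $\psi(s,v)=\varphi(s,\exp_z v)$ twice in the $v$-variable along the curve $r\mapsto w_y + r\widetilde V(w_y)$, then interpret each term geometrically. Fix $s=t$ throughout (the $t$-variable plays no role here, since all derivatives taken are spatial). Set $w=w_y=\exp_z^{-1}(y)$ and write $\sigma_y(r)=\exp_z(w+r\widetilde V(w))$, so that $\sigma_y(0)=y$ and $\sigma_y'(0)=D\exp_z(w)(\widetilde V(w))=V(y)$. The function $r\mapsto \psi(t,w+r\widetilde V(w))=\varphi(t,\sigma_y(r))$ is a smooth real-valued function of $r$, and differentiating it twice at $r=0$ gives, on the left side, $D^2_v\psi(t,w)(\widetilde V(w),\widetilde V(w))$ (since $r\mapsto w+r\widetilde V(w)$ is an affine line in the vector space $TM_z$, its second derivative vanishes and there is no correction term on the $\psi$ side).

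For the right side, I would use the standard formula for the second derivative of a smooth function along a curve in a Riemannian manifold: for a smooth $\varphi$ and a curve $\sigma$,
$$
\frac{d^2}{dr^2}\varphi(\sigma(r)) = D^2_x\varphi\bigl(\sigma'(r),\sigma'(r)\bigr) + \langle \nabla_x\varphi(\sigma(r)),\, \nabla_{\sigma'(r)}\sigma'(r)\rangle,
$$
which is exactly the statement that the Hessian $D^2_x\varphi$ is the covariant derivative of the one-form $D_x\varphi$, together with the chain rule. Evaluating at $r=0$ and recalling $\sigma_y(0)=y$, $\sigma_y'(0)=V(y)$, and $\nabla_{\sigma_y'(0)}\sigma_y'(0)=\sigma_y''(0)$ (the intrinsic acceleration, in the paper's notation), yields
$$
D^2_v\psi(t,w)(\widetilde V(w),\widetilde V(w)) = D^2_x\varphi(V,V)(t,y) + \langle \nabla_x\varphi(t,y),\, \sigma_y''(0)\rangle,
$$
which is the claimed identity. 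For the final remark: when $y=z$ we have $w=0$, so $\sigma_z(r)=\exp_z(r\widetilde V(0))$ is a geodesic, hence $\sigma_z''(0)=0$, and the correction term drops out; moreover one checks $D\exp_z(0)=\mathrm{id}$, so the identification of tangent vectors is trivial and the full bilinear forms agree, $D^2_v\psi(t,0)=D^2_x\varphi(t,z)$.

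The only genuinely delicate point is making precise the claim that $\sigma_y''(0)$ (the covariant acceleration of the curve $r\mapsto\exp_z(w+r\widetilde V(w))$ at $r=0$) is the object appearing in the statement, and that the formula $\frac{d^2}{dr^2}\varphi(\sigma(r))=D^2_x\varphi(\sigma',\sigma')+\langle\nabla_x\varphi,\sigma''\rangle$ holds verbatim; both are standard but should be cited or verified in a chart. Everything else is a routine two-variable chain-rule computation, and since the $t$-dependence enters only as a passive parameter (no $t$-derivatives are taken), it requires no extra care beyond noting that $\varphi$ being $C^1$ in $t$ and $C^2$ in $x$ is more than enough regularity to carry out all the differentiations above.
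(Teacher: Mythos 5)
Your proof is correct, and it supplies exactly the argument the paper omits: the paper's ``proof'' is just the citation ``Analogous to \cite[Lemma 2.7]{AFS}'', and the underlying computation there is the same one you give, namely restricting to the line $r\mapsto w_y+r\widetilde V(w_y)$ (legitimate since the Hessian is tensorial) and applying the identity $\frac{d^2}{dr^2}\varphi(\sigma(r))=D^2_x\varphi(\sigma',\sigma')+\langle\nabla_x\varphi,\sigma''\rangle$ with $\sigma''$ the covariant acceleration, which matches the paper's convention that primes on curves denote covariant derivatives.
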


\begin{proof}
Analogous to \cite[Lemma 2.7]{AFS}.
\end{proof}

\medskip

\begin{lem}\label{parabolic} Let $U\subset M$ be an open
subset, $(t,z)\in  (0,T)\times U$ and  $u:(0,T)\times M\to
[-\infty,\infty)$ be an  upper semicontinuous function and consider
a neighbourhood $V$ of \ $0\in TM_z$ and $\widetilde{u}:(0,T)\times
V\to [-\infty,\infty)$ defined as $\widetilde{u}(s,v)=u(s,\exp_z
v)$. Then, if $(b,\zeta,A)\in \mathbb R \times TM^*_z\times
\mathcal{L}_s^2(TM_z)$,
 $$
    (b,\zeta, A)\in \overline{\mathcal{P}}^{2,+}u(t,z) \iff (b,\zeta, A)\in
    \overline{\mathcal{P}}^{2,+}\widetilde{u}(t,0).
    $$
\end{lem}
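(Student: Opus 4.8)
The plan is to prove Lemma \ref{parabolic} by reducing it, at each fixed nearby point, to the purely spatial statement already available from \cite{AFS}, using Lemma \ref{exp} to transfer the Hessian correctly under the exponential chart. The overall strategy is: first establish the equivalence for the non-closure jets $\mathcal{P}^{2,+}$ (this is the pointwise version), and then pass to the limit to obtain the equivalence for $\overline{\mathcal{P}}^{2,+}$.

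\textbf{Step 1: the pointwise equivalence.} Fix a point $z$ and suppose $(b,\zeta,A)\in\mathcal{P}^{2,+}u(t,z)$, witnessed by a test function $\varphi$, once continuously differentiable in $t$ and twice continuously differentiable in $x$, with $u-\varphi$ attaining a local maximum at $(t,z)$, and $b=D_t\varphi(t,z)$, $\zeta=D_x\varphi(t,z)$, $A=D_x^2\varphi(t,z)$. Define $\psi(s,v)=\varphi(s,\exp_z v)$ on a neighbourhood of $(t,0)$ in $(0,T)\times TM_z$. Since $\exp_z$ is a local diffeomorphism fixing $z\mapsto 0$, the function $\widetilde{u}-\psi$ attains a local maximum at $(t,0)$, so $(D_t\psi(t,0), D_v\psi(t,0), D_v^2\psi(t,0))\in\mathcal{P}^{2,+}\widetilde{u}(t,0)$. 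Now $D_t\psi(t,0)=D_t\varphi(t,z)=b$ trivially (differentiation in $t$ commutes with the fixed spatial reparametrization), and since $D\exp_z(0)=\mathrm{id}_{TM_z}$ we get $D_v\psi(t,0)=D_x\varphi(t,z)=\zeta$. For the Hessian we invoke the last displayed identity in Lemma \ref{exp}: when $y=z$ we have $\sigma_z''(0)=0$, hence $D_v^2\psi(t,0)=D_x^2\varphi(t,z)=A$. Thus $(b,\zeta,A)\in\mathcal{P}^{2,+}\widetilde{u}(t,0)$. The converse is entirely symmetric: given a test function $\psi$ on a neighbourhood of $(t,0)$ realizing an element of $\mathcal{P}^{2,+}\widetilde{u}(t,0)$, set $\varphi(s,y)=\psi(s,\exp_z^{-1}y)$ near $(t,z)$; the same computation (using $\sigma_z''(0)=0$ again, or equivalently that $\exp_z^{-1}$ has vanishing ``second fundamental form'' type correction at $z$) shows the jet is preserved. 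This establishes
$$
(b,\zeta,A)\in\mathcal{P}^{2,+}u(t,z)\iff(b,\zeta,A)\in\mathcal{P}^{2,+}\widetilde{u}(t,0).
$$

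\textbf{Step 2: passing to the closure.} Suppose $(b,\zeta,A)\in\overline{\mathcal{P}}^{2,+}u(t,z)$, with a sequence $(t_k,x_k,u(t_k,x_k),b_k,\zeta_k,A_k)\to(t,z,u(t,z),b,\zeta,A)$ and $(b_k,\zeta_k,A_k)\in\mathcal{P}^{2,+}u(t_k,x_k)$. For $k$ large, $x_k$ lies in the normal neighbourhood of $z$, so $v_k:=\exp_z^{-1}(x_k)$ is defined and $v_k\to 0$. By Step 1 applied at the point $x_k$ — or rather, a version of it identifying jets of $u$ at $x_k$ with jets of $\widetilde{u}$ at $v_k$ — we would get corresponding jets of $\widetilde{u}$ at $(t_k,v_k)$. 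Here lies the one genuine subtlety, which I expect to be \textbf{the main obstacle}: Step 1 as stated above identifies jets of $u$ and $\widetilde{u}$ only at the \emph{center} $z$ of the chart, where the correction term $\sigma_z''(0)$ vanishes. At a nearby point $x_k\neq z$ the identity in Lemma \ref{exp} carries a nonzero correction $\langle\nabla_x\varphi(t_k,x_k),\sigma_{x_k}''(0)\rangle$ involving the second derivative of the exponential map, so the Hessian of $\widetilde{u}$ at $v_k$ differs from that of $u$ at $x_k$ by a linear-in-$\nabla\varphi$ term and by the differential $D\exp_z(v_k)$ pulling back the bilinear form. The resolution is that all these correction terms are \emph{continuous} in the base point and \emph{vanish as $v_k\to 0$}: $D\exp_z(v_k)\to\mathrm{id}$, $\sigma_{x_k}''(0)\to\sigma_z''(0)=0$, and $\nabla_x\varphi(t_k,x_k)\to\zeta$ stays bounded. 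Concretely, one defines $\psi_k(s,v)=\varphi_k(s,\exp_z v)$ for a test function $\varphi_k$ realizing $(b_k,\zeta_k,A_k)$ at $x_k$; then $(\tilde b_k,\tilde\zeta_k,\tilde A_k):=(D_t\psi_k(t_k,v_k),D_v\psi_k(t_k,v_k),D_v^2\psi_k(t_k,v_k))\in\mathcal{P}^{2,+}\widetilde{u}(t_k,v_k)$, and by Lemma \ref{exp} together with the convergences $v_k\to0$, $\zeta_k\to\zeta$, $A_k\to A$, one checks $\tilde b_k\to b$, $\tilde\zeta_k\to\zeta$, $\tilde A_k\to A$. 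Also $\widetilde{u}(t_k,v_k)=u(t_k,x_k)\to u(t,z)=\widetilde{u}(t,0)$ by definition of $\widetilde{u}$ and continuity considerations. Hence $(b,\zeta,A)\in\overline{\mathcal{P}}^{2,+}\widetilde{u}(t,0)$. The reverse implication is obtained the same way, composing with $\exp_z^{-1}$ and noting that $\exp_z^{-1}$ is itself smooth with vanishing correction at $z$, so the identical estimates apply. This completes the equivalence and hence the proof of the lemma. The reference to \cite[Lemma 2.8]{AFS}, the stationary analogue, can be cited to shorten the second step, since the only new feature here — the extra time variable $s$ — plays no role in any of the spatial computations and merely comes along as an inert parameter.
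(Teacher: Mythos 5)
Your proof is correct and follows essentially the same route as the paper, whose entire proof is the one-line instruction to combine Lemma \ref{exp} with the argument of the stationary analogue \cite[Proposition 2.8]{AFS}; you have simply written out those details, correctly isolating the key point that the correction term $\langle\nabla_x\varphi,\sigma_y''(0)\rangle$ and the pullback by $D\exp_z(v_k)$ are nonzero away from the center of the chart but vanish continuously as $v_k\to 0$, which is exactly what makes the passage to the closure work.
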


\begin{proof}
Use the above Lemma as in the proof of \cite[Proposition 2.8]{AFS}.
\end{proof}

\medskip

As in \cite{Ilmanen} in the case of the mean curvature evolution
equation and \cite{AFS} in the case of general (nonsingular)
stationary equations, the following result is one of the keys to
the proof of the comparison result for general (nonsingular)
evolution equations which we give in the Appendix.

\begin{thm}\label{keyparabolic}
Let $M_1, ...,M_k$ be Riemannian manifolds, and $\Omega_i\subset
M_i$ open subsets. Define $\Omega=(0,T)\times\Omega_1\times
\cdots\times \Omega_k$. Let $u_i$ be upper semicontinuous functions
on $(0,T)\times \Omega_i$, $i=1,...,k$; let $\varphi$ be a function
defined on $\Omega$ such that it is once continuously differentiable
in $t\in (0,T)$ and twice continuously differentiable in
$x:=(x_1,...,x_k)\in \Omega_1\times \cdots \times \Omega_k$ and set
    $$\omega(t,x_1,...,x_k)=u_1(t,x_1)+\cdots+u_k(t,x_k)$$
for $(t,x_1,...,x_k)\in\Omega$. Assume that
$(\widehat{t},\widehat{x}_1,\ldots,\widehat{x}_k)$ is a maximum of
$\omega-\varphi$ in $\Omega$. Assume, moreover, that there is an
$\tau>0$ such that for every $M>0$ there is $C>0$ such that for
$i=1,...,k$,
\begin{equation}  \label{boundconditions}   \begin{cases}
a_i\le C \text{ \ whenever \ } (a_i,\zeta_i,A_i)\in \mathcal{
\overline{P}}^{2,+}_{M_i}u_i(t,x_i) &\\
d(x_i,\widehat{x}_i)+|t-\widehat{t}|\le \tau \text{ \ and \  }
|u_i(t,x_i)|+|\zeta_i|+||A_i||\le M.&
\end{cases}\end{equation}

Then, for each $\varepsilon>0$ there exist real  numbers $b_i$ and
bilinear forms $B_i \in
\mathcal{L}^{2}_{s}((TM_{i})_{\widehat{x}_{i}})$,  $i=1, ..., k$,
such that
$$
\left(b_i,
D_{x_{i}}\varphi(\widehat{t},\widehat{x}_1,...,\widehat{x}_k), B_i
\right)\in \mathcal{\overline{P}}^{\, 2, +}_{M_i} u_i(\widehat{t},
\widehat{x}_i)$$ for $i=1,...,k$, and  the block diagonal matrix
with entries $B_i$ satisfies
    $$-\left({1\over\varepsilon}+\|A\|\right)I\leq
    \left(
\begin{array}{ccc}
  B_1   & \ldots &   0    \\
 \vdots & \ddots & \vdots \\
   0    & \ldots &  B_k   \\
\end{array}\right)\leq A+\varepsilon A^2,$$
where $A=D_x^2\varphi(\widehat{t},\widehat{x}_1,...,\widehat{x}_k)$
 \, and \, $b_1+\cdots+b_k=\frac{\partial \varphi}{\partial t}(\widehat{t},\widehat{x}_1,...,\widehat{x}_k)$.
\end{thm}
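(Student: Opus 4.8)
The plan is to reduce this parabolic statement to the elliptic maximum principle of Theorem~\ref{key to comparison}, using the exponential-chart device of Lemmas~\ref{exp} and~\ref{parabolic} to transfer the problem to tangent spaces where the classical (Euclidean) Crandall--Ishii lemma applies. Concretely, I would first fix the maximum point $(\widehat t,\widehat x_1,\dots,\widehat x_k)$ of $\omega-\varphi$ and work in normal coordinates: let $\widetilde u_i(s,v)=u_i(s,\exp_{\widehat x_i}v)$ for $v$ near $0\in T(M_i)_{\widehat x_i}$, and set $\widetilde\varphi(s,v_1,\dots,v_k)=\varphi(s,\exp_{\widehat x_1}v_1,\dots,\exp_{\widehat x_k}v_k)$. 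Then $(\widehat t,0,\dots,0)$ is a local maximum of $\sum_i\widetilde u_i-\widetilde\varphi$, the functions $\widetilde u_i$ are USC on open subsets of $\mathbb R\times(M_i)_{\widehat x_i}\cong\mathbb R^{1+n_i}$, and by Lemma~\ref{exp} the spatial Hessian of $\widetilde\varphi$ at the base point equals $D_x^2\varphi(\widehat t,\widehat x_1,\dots,\widehat x_k)$ (the correction terms vanish at $y=\widehat x_i$ since $\sigma''_{\widehat x_i}(0)=0$), while of course $D_t\widetilde\varphi=D_t\varphi$ and $D_{v_i}\widetilde\varphi=D_{x_i}\varphi$ there.

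Next I would invoke the Euclidean parabolic theorem on semicontinuous functions --- the parabolic version of the Crandall--Ishii lemma, as in \cite[Theorem~8.3]{CIL} --- applied to $\widetilde u_1,\dots,\widetilde u_k$ and the test function $\widetilde\varphi$ at $(\widehat t,0,\dots,0)$. This requires exactly a ``properness''/closedness hypothesis on the parabolic superjets, which is what the condition \eqref{boundconditions} provides once it is transported to the charts: by Lemma~\ref{parabolic}, $\overline{\mathcal P}^{2,+}\widetilde u_i(t,v)=\overline{\mathcal P}^{2,+}u_i(t,\exp_{\widehat x_i}v)$ (after the linear identification of tangent spaces induced by $D\exp$), and since near $0$ the coordinate change is a bi-Lipschitz diffeomorphism with controlled derivatives, a bound of the form required in \cite[Theorem~8.3]{CIL} on the $\widetilde u_i$-side follows from \eqref{boundconditions}; here one uses that $d(x_i,\widehat x_i)$ and $|v_i|=|\exp_{\widehat x_i}^{-1}x_i|$ are comparable for $d(x_i,\widehat x_i)<\tau$, shrinking $\tau$ if necessary below $i_M(\widehat x_i)$. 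The Euclidean theorem then yields, for each $\varepsilon>0$, reals $b_i$ and symmetric forms $B_i$ on $(M_i)_{\widehat x_i}$ with $(b_i,D_{v_i}\widetilde\varphi(\widehat t,0,\dots,0),B_i)\in\overline{\mathcal P}^{2,+}\widetilde u_i(\widehat t,0)$, with $\sum_i b_i=\partial_t\widetilde\varphi=\partial_t\varphi$ at the base point, and with the block-diagonal matrix $\mathrm{diag}(B_1,\dots,B_k)$ pinched between $-(\varepsilon^{-1}+\|\widetilde A\|)I$ and $\widetilde A+\varepsilon\widetilde A^2$, where $\widetilde A=D_v^2\widetilde\varphi(\widehat t,0,\dots,0)=D_x^2\varphi(\widehat t,\widehat x_1,\dots,\widehat x_k)=:A$.

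Finally I would translate the conclusion back to $M$: by Lemma~\ref{parabolic} again, $(b_i,D_{x_i}\varphi(\widehat t,\widehat x_1,\dots,\widehat x_k),B_i)\in\overline{\mathcal P}^{2,+}_{M_i}u_i(\widehat t,\widehat x_i)$ after reading $B_i$ as a bilinear form on $T(M_i)_{\widehat x_i}$ via the metric identification (which is the identity at the base point of a normal chart, so no distortion of the matrix inequalities occurs), and the relations $\sum_ib_i=\partial_t\varphi$, $-(\varepsilon^{-1}+\|A\|)I\le\mathrm{diag}(B_i)\le A+\varepsilon A^2$ are preserved verbatim since $A$ is literally the spatial Hessian of $\varphi$. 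This is precisely the asserted conclusion. The main obstacle is the bookkeeping in the second step: one must check carefully that the abstract bound \eqref{boundconditions} on the $M$-side superjets, which involves the Riemannian data $(a_i,\zeta_i,A_i)$ and $d(x_i,\widehat x_i)$, really does imply the form of local boundedness of $\widetilde u_i$ (or of its superjets) that \cite[Theorem~8.3]{CIL} needs --- this uses uniform control of $\exp_{\widehat x_i}$ and its inverse on a fixed small ball, i.e. comparability of $|v_i|$ with $d(x_i,\widehat x_i)$ and of the chart-Hessian with the Riemannian Hessian, all of which hold once $\tau<\min_i i_M(\widehat x_i)$; once this translation is in hand the rest is a routine application of the Euclidean result and Lemmas~\ref{exp}--\ref{parabolic}.
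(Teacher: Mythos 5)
Your proposal is correct and follows essentially the same route as the paper: compose with the exponential maps at the $\widehat{x}_i$, apply the Euclidean parabolic maximum principle \cite[Theorem~8.3]{CIL} in the tangent spaces, and transfer the data back via Lemma~\ref{exp} (equality of Hessians at the base point) and Lemma~\ref{parabolic} (invariance of the closed parabolic superjets). The only difference is that you spell out the verification that hypothesis \eqref{boundconditions} survives the chart change, a point the paper leaves implicit.
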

\begin{proof} The result is proved in \cite{CIL} for $M_i=\mathbb R^{n_i}$, $i=1,...,k$. As
 in the stationary case \cite{AFS},
we can reduce the problem to this situation by an adecuate
composition with the exponential
 mappings. Let us give some details for completeness. We may assume
 (by taking  smaller neighborhoods of
 $x_i$, if necessary),  that the sets $\Omega_i$ are
diffeomorphic images of balls by the exponential mappings
$\exp_{\widehat{x}_{i}}:B(0_{\widehat{x}_i},
r_i)\to\Omega_i:=B(\widehat{x}_{i}, r_{i})$. Consider the Riemannian
manifold $M:=M_1\times \cdots \times M_k$ and  $\widehat{x}:=
(\widehat{x}_1,...,\widehat{x}_k)\in \Omega_1\times \cdots \times
\Omega_k$. Recall that if $v:=(v_1,...,v_k)\in B(0_{\widehat{x}_1},
r_1)\times \cdots \times B(0_{\widehat{x}_k}, r_k)$  the exponential
map $\exp_{\widehat{x}}$ is defined as  $\exp_{\widehat{x}}(v) =
    \left(\exp_{\hat{x}_1}(v_{1}), ..., \exp_{\hat{x}_k}(v_{k})\right)$.
     Then
$\exp_{\widehat{x}}$ maps diffeomorphically the open set
$B(0_{\widehat{x}_1}, r_1)\times \cdots \times B(0_{\widehat{x}_k},
r_k)\subset TM_{\widehat{x}}=(TM_{1})_{\widehat{x}_1}\times \cdots
\times (TM_{k})_{\widehat{x}_k}$ onto $\Omega_1\times \cdots \times
\Omega_k$.

We consider the functions, defined on suitable open subsets of
euclidean spaces,
\begin{equation*}
\widetilde{\omega}(t,v):=\omega(t,\exp_{\widehat{x}}(v)), \quad
\psi(t,v):=\varphi(t,\exp_{\widehat{x}}(v)), \quad
\widetilde{u}_{i}(t,v_i):=u_{i}(t,\exp_{\widehat{x}_i}(v_i)).
\end{equation*}
We have that $$\widetilde{\omega}(t,v_1,..., v_k)=\widetilde{u}_1
(t,v_1)+\cdots+\widetilde{u}_k (t,v_k),$$ and
$(\widehat{t},0_{\widehat{x}})=(\widehat{t},0_{\widehat{x}_1},...,0_{\widehat{x}_{k}})$
is the maximum of $\widetilde{\omega}-\psi$. Therefore, we apply
\cite[Theorem 8.3]{CIL} to ensure, for every $\varepsilon>0$, the
existence of  real  numbers $b_i$ and bilinear forms $B_i \in
\mathcal{L}^{2}_{s}(\mathbb R^{n_i})$,  $i=1, ..., k$, such that
$$
\left(b_i,D_{v_{i}}\psi(\widehat{t},0_{\widehat{x}}), B_i
\right)\in \mathcal{\overline{P}}^{\, 2, +}
 \widetilde{u}_i(\widehat{t}, 0_{\widehat{x}_i})$$
for $i=1,...,k$, and  the block diagonal matrix with entries $B_i$
satisfies
    $$-\left({1\over\varepsilon}+\|A\|\right)I\leq
    \left(
\begin{array}{ccc}
  B_1   & \ldots &   0    \\
 \vdots & \ddots & \vdots \\
   0    & \ldots &  B_k   \\
\end{array}\right)\leq A+\varepsilon A^2,$$
where $A=D_v^2\psi(\widehat{t},0_{\widehat{x}})$
 \, and \, $b_1+\cdots+b_k=
 \frac{\partial \psi}{\partial t}(\widehat{t},0_{\widehat{x}})$.
Clearly
\begin{equation*}
\frac{\partial \psi}{\partial t}(\widehat{t},0_{\widehat{x}})=
\frac{\partial \varphi}{\partial t}(\widehat{t},\widehat{x}),
\quad \quad D_{v_i}\psi(\widehat{t},0_{\widehat{x}})=
D_{x_i}\varphi(\widehat{t},\widehat{x}),\end{equation*}
  and Lemma \ref{exp}
provides the equality $D_v^2\psi(\widehat{t},0_{\widehat{x}})=
D_x^2\varphi(\widehat{t},\widehat{x})$. To conclude this proof it
remains to apply Lemma \ref{parabolic} to obtain the equivalence
\begin{equation*} \left(b_i,D_{v_i}\psi(\widehat{t},0_{\widehat{x}}),B_i \right)\in
\mathcal{\overline{P}}^{\, 2,
+}\widetilde{u}_i(\widehat{t},0_{\widehat{x}_i}) \, \, \iff \, \,
\left(b_i, D_{x_i}\varphi(\widehat{t},\widehat{x}),B_i \right)\in
\mathcal{\overline{P}}^{2, +} u_i(\widehat{t},\widehat{x}_i).
\end{equation*}

\end{proof}

\bigskip

\section{Comparison}

Let us state and prove our main comparison result  for viscosity
solutions of \eqref{curvatureevolutioneq}.

\begin{thm} \label{compact} Let $M$ be a compact Riemannian manifold
of nonnegative sectional curvature, and let $F:J_{0}^{2}(M)
\rightarrow \mathbb R$ be continuous, elliptic, translation
invariant and geometric. Let $u\in USC([0,T)\times M)$ be a
subsolution and $v\in LSC([0,T)\times M)$ be a supersolution of
\eqref{curvatureevolutioneq} on $M$.
   Then $u \le v$ on $[0,T)\times M$ whenever
$u \le v$ on $\{0\}\times M$.
 \end{thm}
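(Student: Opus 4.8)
The plan is to run the standard doubling-of-variables argument of the theory of viscosity solutions, adapted to the Riemannian setting via the parabolic maximum principle (Theorem \ref{keyparabolic}) and the curvature estimate of Proposition \ref{bound for A with no restriction on curvature}. Suppose, for contradiction, that $\sup_{[0,T)\times M}(u-v)=:\sigma>0$. First I would replace $u$ by $u-\eta/(T-t)$ for small $\eta>0$ (or, equivalently, by $u-\varepsilon t$), which is still a subsolution, forces the supremum to be attained at an interior time, and lets us recover the general case by letting $\eta\to 0$. So we may assume the supremum of $u-v$ over $[0,T-\delta]\times M$ is attained at a point $(t_0,x_0)$ with $t_0>0$. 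Since $u$ is USC, $v$ is LSC and $M$ is compact, all suprema below are attained.

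Next, for $\alpha>0$ introduce the penalized functional
\[
\Phi_\alpha(t,x,y)=u(t,x)-v(t,y)-\alpha f\bigl(d(x,y)^2\bigr)-\frac{\eta}{T-t},
\]
where $f\in\mathcal F(F)$ is fixed (nonempty by the Proposition after Definition \ref{def of the F class}, using compactness of $M$), and let $(t_\alpha,x_\alpha,y_\alpha)$ be a maximum point. By the compactness of $M\times M$ and Lemma \ref{lemma-m-alphas}, passing to a subsequence we get $x_\alpha,y_\alpha\to\widehat x$, $t_\alpha\to\widehat t>0$, $\alpha f(d(x_\alpha,y_\alpha)^2)\to 0$, and the doubled sup converges to $\sigma>0$; in particular $u(\widehat t,\widehat x)-v(\widehat t,\widehat x)=\sigma>0$, and eventually $d(x_\alpha,y_\alpha)<\tau$ and $d(x_\alpha,y_\alpha)<\min\{i_M(x_\alpha),i_M(y_\alpha)\}$ (using continuity and positivity of the injectivity radius on the compact $M$). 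Apply Theorem \ref{keyparabolic} with $k=2$, $u_1=u$, $u_2=-v$, and $\varphi(t,x,y)=\alpha f(d(x,y)^2)+\eta/(T-t)$: the growth hypothesis \eqref{boundconditions} on the superjets is automatic here since $M$ is compact and the relevant quantities are bounded near the maximum. This yields $b_1,b_2\in\mathbb R$ and $B_1\in\mathcal L^2_s(TM_{x_\alpha})$, $B_2\in\mathcal L^2_s(TM_{y_\alpha})$ with
\[
(b_1,\alpha D_x[f(d(\cdot,y_\alpha)^2)](x_\alpha),B_1)\in\overline{\mathcal P}^{\,2,+}u(t_\alpha,x_\alpha),\qquad
(b_2,-\alpha D_y[f(d(x_\alpha,\cdot)^2)](y_\alpha),B_2)\in\overline{\mathcal P}^{\,2,-}v(t_\alpha,y_\alpha),
\]
$b_1+b_2=\eta/(T-\widehat t)^2>0$, and the block-diagonal matrix $\mathrm{diag}(B_1,B_2)$ bounded above by $A+\varepsilon A^2$ where $A=D^2_{x,y}\varphi$. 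Write $\zeta_\alpha:=\alpha D_x[f(d(\cdot,y_\alpha)^2)](x_\alpha)$; by the chain rule $\zeta_\alpha = \alpha f'(d(x_\alpha,y_\alpha)^2)\cdot D_x[d(\cdot,y_\alpha)^2](x_\alpha)$, and $L_{y_\alpha x_\alpha}$ maps $-\alpha D_y[f(d(x_\alpha,\cdot)^2)](y_\alpha)$ to $\zeta_\alpha$ since the gradients of $d(\cdot,\cdot)^2$ at the two endpoints of a minimizing geodesic are parallel-related.

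Now the case split. If $\zeta_\alpha=0$ for a subsequence, then $x_\alpha=y_\alpha$ (the gradient of $d(\cdot,y)^2$ vanishes only at $y$), and the subsolution/supersolution inequalities at $\zeta=0$ give $b_1\le 0$ and $b_2\ge 0$, contradicting $b_1+b_2>0$. If $\zeta_\alpha\neq 0$, I use the subsolution inequality for $u$ and the supersolution inequality for $v$ — valid for elements of the closed parabolic superjet/subjet by the usual approximation argument — to get
\[
b_1 + F(\zeta_\alpha,B_1)\le 0,\qquad b_2 + F\bigl(L_{x_\alpha y_\alpha}\zeta_\alpha,\, L_{x_\alpha y_\alpha}(B_2)\bigr)\ge 0
\]
(here the translation invariance \textbf{(C)} of $F$ recasts the supersolution inequality at $y_\alpha$ in terms of data transported to $x_\alpha$; one uses $d(x_\alpha,y_\alpha)<\tau$). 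Subtracting,
\[
\frac{\eta}{(T-\widehat t)^2}\le b_1+b_2 \le F\bigl(L_{x_\alpha y_\alpha}\zeta_\alpha, L_{x_\alpha y_\alpha}B_2\bigr)-F(\zeta_\alpha,B_1).
\]
The crux is to show the right-hand side is $\le 0$ in the limit, which will contradict $\eta>0$. By translation invariance, $F(L_{x_\alpha y_\alpha}\zeta_\alpha,L_{x_\alpha y_\alpha}B_2)=F(\zeta_\alpha, \widetilde B_2)$ where $\widetilde B_2 := L_{y_\alpha x_\alpha}(L_{x_\alpha y_\alpha}B_2)=B_2$ viewed through the isometry; more usefully, by ellipticity \textbf{(B)} it suffices to show $L_{y_\alpha x_\alpha}B_2 \ge B_1$ as bilinear forms on $TM_{x_\alpha}$, i.e. that $B_1 - L_{y_\alpha x_\alpha}B_2\le 0$. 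This is exactly where nonnegative curvature enters: applying the matrix inequality $\mathrm{diag}(B_1,B_2)\le A+\varepsilon A^2$ to vectors of the form $(v,L_{x_\alpha y_\alpha}v)$ gives
\[
B_1(v,v)+B_2(L_{x_\alpha y_\alpha}v,L_{x_\alpha y_\alpha}v)\le (A+\varepsilon A^2)\bigl((v,L_{x_\alpha y_\alpha}v),(v,L_{x_\alpha y_\alpha}v)\bigr),
\]
and since $\varphi(t,x,y)=\alpha f(d(x,y)^2)+\eta/(T-t)$ has $D^2_{x,y}$-part equal to $\alpha f'(d^2)\,D^2_{x,y}(d^2) + \alpha f''(d^2)\,(D_{x,y}d^2)\otimes(D_{x,y}d^2)$, Proposition \ref{bound for A with no restriction on curvature} gives $D^2_{x,y}(d^2)(v,L_{x_\alpha y_\alpha}v)\le 0$, while the rank-one term annihilates $(v,L_{x_\alpha y_\alpha}v)$ because $D_{x,y}(d^2)$ pairs to zero against parallel-transported vectors (the two endpoint gradients cancel). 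Hence the right side is $O(\varepsilon)$ uniformly, so $B_2(L_{x_\alpha y_\alpha}v,L_{x_\alpha y_\alpha}v)\le -B_1(v,v)+O(\varepsilon)$; using isometry of $L_{x_\alpha y_\alpha}$ this reads $L_{y_\alpha x_\alpha}B_2\le -B_1+O(\varepsilon)$ — wait, the correct bookkeeping gives $B_1(v,v)\le -B_2(L_{x_\alpha y_\alpha}v,L_{x_\alpha y_\alpha}v)+O(\varepsilon) = (L_{y_\alpha x_\alpha}B_2)(v,v)$ only after the sign of $B_2$'s role (recall $u_2=-v$, so the jet of $-v$ carries $B_2$ and we want $B_1\le L_{y_\alpha x_\alpha}B_2$ up to $\varepsilon$). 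Granting this, ellipticity and geometricity \textbf{(B)},\textbf{(D)} of $F$ give $F(\zeta_\alpha,B_1)\ge F(\zeta_\alpha,L_{y_\alpha x_\alpha}B_2) - o(1) = F(L_{x_\alpha y_\alpha}\zeta_\alpha, L_{x_\alpha y_\alpha}B_2)-o(1)$, so the displayed chain forces $\eta/(T-\widehat t)^2\le o(1)+O(\varepsilon)$, a contradiction once $\varepsilon,\eta$ are sent to $0$. Finally, letting $\eta\to 0$ removes the auxiliary term and gives $u\le v$ on $[0,T)\times M$.

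\textbf{Main obstacle.} The delicate point is the sign control of the Hessian term $A=D^2_{x,y}\varphi$ restricted to the diagonal subspace $\mathcal D=\{(v,L_{x_\alpha y_\alpha}v)\}$, and more precisely propagating it through the $\varepsilon A^2$ correction and the composition $f\circ d^2$ (so that the first-order term $f'(d^2)D^2(d^2)$ keeps the correct sign while the second-order rank-one term drops out on $\mathcal D$); this is exactly what Proposition \ref{bound for A with no restriction on curvature} is designed to supply, and it is the step that fails when $M$ has negative curvature — explaining why the uniform-continuity hypothesis on $F$ in $D^2u$ is needed in that case. A secondary technical point is justifying the subsolution/supersolution inequalities for elements of the \emph{closed} parabolic semijets $\overline{\mathcal P}^{\,2,\pm}$ rather than the open ones, and verifying that the admissible-test-function class $\mathcal A(F)$ is rich enough for this passage; this is routine given $\mathcal A(F)\neq\emptyset$ and the density Proposition \ref{AF is dense}.
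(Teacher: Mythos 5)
Your overall strategy (doubling of variables, penalization by $\alpha f(d(x,y)^2)$ with $f\in\mathcal F(F)$, nonnegative curvature via Proposition \ref{bound for A with no restriction on curvature} to get $P\le L_{yx}Q$ on the diagonal subspace, then ellipticity and translation invariance) is the right one and matches the paper in its second half. But there is a genuine gap at the point where you invoke Theorem \ref{keyparabolic}: you assert that the hypothesis \eqref{boundconditions} ``is automatic here since $M$ is compact.'' It is not, and compactness is irrelevant to it. Condition \eqref{boundconditions} asks for a uniform upper bound on $a$ over all $(a,\zeta,A)\in\overline{\mathcal P}^{2,+}u(t,x)$ with $|\zeta|+\|A\|\le M$; for a nonsingular equation this follows from $a\le -F(\ldots)$, but here the $\mathcal F$-subsolution property constrains only admissible test functions in $\mathcal A(F)$, and when the superjet contains triplets $(a,0,A)$ one gets no bound on $a$ at all (and $F(\zeta,A)$ may blow up as $\zeta\to 0$). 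The paper points this out explicitly and circumvents it by doubling the \emph{time} variable as well (penalty $\alpha(t-s)^2$) and applying the \emph{elliptic} maximum principle, Theorem \ref{key to comparison}, treating $s,t$ as spatial variables and discarding the second-order information in time; that theorem carries no analogue of \eqref{boundconditions}. With your single time variable you are forced through the parabolic theorem of sums, whose hypothesis you cannot verify for singular $F$.

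A second, related gap is your treatment of the case $\zeta_\alpha=0$ (i.e.\ $x_\alpha=y_\alpha$): you claim the sub/supersolution inequalities give $b_1\le 0$ and $b_2\ge 0$ for the jets produced by the theorem of sums. But elements $(b,0,B)$ of the closed parabolic semijets arise from arbitrary $C^{1,2}$ test functions, not from functions in $\mathcal A(F)$, so the $\mathcal F$-solution definition yields nothing about them; the ``routine approximation argument'' you invoke works only when $\zeta\neq0$ (this is exactly the paper's parenthetical remark in Case 1). The paper's Case 2 avoids the theorem of sums entirely: since $f'(0)=f''(0)=0$, the explicit penalization function is itself an admissible test function whose space-gradient and Hessian vanish at $x_\alpha=y_\alpha$, so the definitions apply directly and the strict-subsolution term $-\varepsilon/(T-t)$ produces the contradiction. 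Finally, a minor point: your claim that $(A+\varepsilon A^2)|_{\mathcal D}$ is ``$O(\varepsilon)$ uniformly'' is not the right estimate; one needs the specific choice $\varepsilon_\alpha=(1+\|A_\alpha\|)^{-1}$ so that $\lambda_i+\varepsilon_\alpha\lambda_i^2\le\lambda_i+|\lambda_i|\le 0$ when all eigenvalues of $A_\alpha|_{\mathcal D}$ are nonpositive.
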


 \begin{proof} Since $M$ is compact we know that $M$ has injectivity radius $i_M>0$.

Let us start noting that we may assume $u$ and $-v$ bounded above on
$[0,T)\times M$. Otherwise, for every $0<S<T$, consider $u$ and $-v$
defined on the compact set $[0,S]\times M$, where they are also
u.s.c. and thus bounded above. Then, we apply the arguments of the
proof  to $u$ and $-v$ in $[0,S)\times M$.

Next, let us observe that for $\varepsilon >0$, the function
$\widetilde{u}=u-\frac{\varepsilon}{T-t}$
 is also a subsolution of \, $u_t+F(Du,D^2u)=0$ on  $[0,T)\times M$.
 Moreover,
\begin{eqnarray}\label{conditions on u}
& &\widetilde{u}_t+F(D\widetilde{ u}, D^2 \widetilde{ u})\le -
\frac{\varepsilon}{T^2} \quad \text{ for } D\widetilde{u}\neq 0,\\
& &\widetilde{u}_t\leq -\frac{\varepsilon}{T^2} \quad \text{ for }
D\widetilde{u}=0, \text{ and } \\
& & \lim_{t\to T^-}\widetilde{ u}(t,x)=-\infty \text{ uniformly on
} M.
 \end{eqnarray}
 Since the  assertion $\widetilde{u}\le v$ for every $\varepsilon>0$ implies $u\le v$,
  it will suffice to prove the comparison result under the assumptions given in (4.1-4.3).

Assume to the contrary that $\displaystyle{\sup_{[0,T)\times
M}(u-v)>0}$. Take $f\in\mathcal{F}$.
  Since  $M$ is compact, $u$ and  $ -v$ are u.s.c. and (4.3) holds, we can consider for every $\alpha \in \mathbb{N}$,
 $$m_\alpha:=\sup_{\substack{0\le s, t<T \\ x,y \in M}}\{u(s,x)-v(t,y)-\alpha f\left(d(x,y)^2\right)-\alpha(t-s)^2\}, $$
 which is attained at some $(s_{\alpha}, t_{\alpha}, x_\alpha, y_\alpha)\in [0,T)\times [0,T)\times M\times M$.
Clearly,
    $$m_\alpha \ge \displaystyle{\sup_{[0,T)\times M}(u-v)>0}.$$
If $t_\alpha=0$ for infinitely many $\alpha$'s, which we may assume
are all $\alpha$, then we have
 $$
 0<\sup_{[0,T)\times M}(u-v)\le m_\alpha = \sup_{s,x,y}\left(u(s,x)-v(0,y)-\alpha f\left(d(x,y)^2\right)-
 \alpha s^2\right)
 $$We deduce from  Lemma \ref{lemma-m-alphas}  that
 $\lim_{\alpha\rightarrow \infty}\alpha f\left(d(x_\alpha,y_\alpha)^2\right)=0$ and
 $\lim_{\alpha\rightarrow \infty}\alpha (t_{\alpha}-s_{\alpha})^2=0$. By compactness, we can assume that a
  subsequence of $(t_{\alpha},s_{\alpha}, x_\alpha,y_\alpha)$, which we still denote
  $(t_{\alpha}, s_{\alpha}, x_\alpha,y_\alpha)$, converges to a point $(s_{0}, t_{0}, x_0,y_0)$. By Lemma
  \ref{lemma-m-alphas} we have that $x_0=y_0$ and $s_{0}=t_{0}=0$,
  and
  $\lim_{\alpha\rightarrow\infty}m_{\alpha}=u(0,x_0)-v(0,x_0)
=\sup_{x\in M}(u(0,x)-v(0,x))\le 0$, which is a contradiction.

\medskip

A completely analogous argument leads us to a contradiction if
$s_{\alpha}=0$ for infinitely many $\alpha$'s.

\medskip

Thus we may assume that there exist $\alpha_0>0$ such that
$s_{\alpha}>0$ and $t_{\alpha}>0$ for $\alpha>\alpha_0$. By
compactness and Lemma \ref{lemma-m-alphas} we may also assume that
$x_{\alpha}$ and $y_{\alpha}$ converge to the same point
$x_{0}=y_{0}$, and in particular that $x_\alpha,y_\alpha \in
B(x_0,r/2)$ for all $\alpha>\alpha_0$, where $r>0$ is small enough
such that $0<r<i_M$ and conditions ({\bf B}, {\bf C}) of Section 2
hold whenever $d(x,y)<r$. Therefore the function $d(x,y)^2$ and
hence the functions
\[
\Phi_\alpha(x,y):=\alpha f(d(x,y)^2),\qquad
\varphi_\alpha(s,t,x,y):=\Phi_\alpha(x,y)+\alpha(t-s)^2
\]
are $C^2$ smooth on $(0,T)\times (0,T)\times B(x_0,r/2)\times
B(x_0,r/2) $.

Recall that $\mathcal{\overline{P}}^{2,-}v(t_\alpha,y_{\alpha})=
-\mathcal{\overline{P}}^{2,+}(-v)(t_\alpha,y_{\alpha})$, and if we
consider the function
$$
\Psi(x,y):=d(x,y)^2
$$
we obtain from \cite[Section 3]{AFS} that
  $$  D_x\Psi(x_\alpha,y_\alpha)=-2\exp_{x_\alpha}^{-1}(y_\alpha), \,
  \textrm{
  and } \,\, D_y\Psi(x_\alpha,y_\alpha)=-2\exp_{y_\alpha}^{-1}(x_\alpha).$$

Now we cannot directly apply Theorem \ref{keyparabolic}, because
condition \eqref{boundconditions} is not generally satisfied due
to the singularity of $F$ (one has a serious difficulty when
$\mathcal{\overline{P}}^{\, 2, +}u(s_\alpha,x_{\alpha})$ contains
triplets of the form $(a, 0, A)$: in this case one cannot use the
fact that $u$ is a subsolution to guarantee that $a\leq C$, since
$F(\zeta, A)\to\infty$ as $\zeta\to 0$). Instead we will use
Theorem \ref{key to comparison}, treating the variables $s,t$ as
if they were spatial variables in the stationary case, and then
ignoring the information that this result gives about the second
derivatives with respect to the variables $t,s$, which we do not
need here. Bearing in mind that $(s_{\alpha}, t_\alpha, x_\alpha,
y_\alpha) $ is the maximum of the function $(s,t,x,y) \rightarrow
u(s,x)-v(t,y)-\varphi_\alpha(s,t,x,y)$, and setting
$$
A_{\alpha}:=D^2_{x,y}\varphi_\alpha(s_\alpha, t_\alpha,
x_{\alpha}, y_{\alpha}), \qquad
\varepsilon:=\varepsilon_\alpha=\frac{1}{\,1+||A_\alpha||\,},
$$ we obtain this way two bilinear forms $P_\alpha \in \mathcal{L}^{2}_{s}(TM_{x_{\alpha}})$,
and $Q_\alpha\in \mathcal{L}^{2}_{s}(TM_{y_{\alpha}})$ such that
\begin{eqnarray}\label{subjets and superjets}
& & \left(\frac{\partial}{\partial
s}\varphi_\alpha(s_{\alpha},t_\alpha,
x_{\alpha},y_{\alpha}),D_x\varphi_\alpha(s_{\alpha},t_\alpha,
x_{\alpha},y_{\alpha}), P_\alpha \right) \in
\mathcal{\overline{P}}^{\, 2,
+}u(s_\alpha,x_{\alpha}),\\
& & \left(-\frac{\partial}{\partial
t}\varphi_\alpha(s_{\alpha},t_\alpha, x_{\alpha},y_{\alpha}),
-D_y\varphi_\alpha(s_{\alpha},t_\alpha, x_{\alpha},y_{\alpha}),
Q_\alpha \right) \in \mathcal{\overline{P}}^{\, 2,
-}v(t_\alpha,y_{\alpha}),
\end{eqnarray}
and
\begin{equation} \label{ineqcuadratic}
\quad -\left({1\over\varepsilon_{\alpha}}+\|A_{\alpha}\|\right)I\leq
    \left(
\begin{array}{ccc}
  P_\alpha   &   0    \\
   0  &  -Q_\alpha  \\
\end{array}\right)\leq A_{\alpha}+\varepsilon_{\alpha} A_{\alpha}^2. \end{equation}
These inequalities can be deduced from the corresponding ones in
Theorem \ref{key to comparison} (just bear in mind the special form
of our function $\varphi_{\alpha}$, and apply the inequalities given
by Theorem \ref{key to comparison} to vectors of the form $(0,0,
v,w)$, where the zeros correspond to the variables $s$ and $t$).

In our case we have \begin{eqnarray}\label{expressions for the
subderivatives} & & a_{\alpha}:=\frac{\partial}{\partial
s}\varphi_\alpha(s_{\alpha},t_\alpha,
x_{\alpha},y_{\alpha})=-2\alpha(t_{\alpha}-s_{\alpha}),\\
& & -b_{\alpha}:=-\frac{\partial}{\partial
t}\varphi_\alpha(s_{\alpha},t_\alpha,
x_{\alpha},y_{\alpha})=-2\alpha(t_{\alpha}-s_{\alpha}), \notag\\
& & D_x\varphi_\alpha(s_{\alpha},t_\alpha,
x_{\alpha},y_{\alpha})=-2\alpha f'(d(x_\alpha,y_\alpha)^2)
\exp_{x_{\alpha}}^{-1}(y_{\alpha}), \\
& & -D_y\varphi_\alpha(s_{\alpha},t_\alpha,
x_{\alpha},y_{\alpha})=2\alpha f'(d(x_\alpha,y_\alpha)^2)
\exp_{y_{\alpha}}^{-1}(x_{\alpha}), \notag
\end{eqnarray}
and in particular we see that
\begin{equation}\label{a+b=0}
a_{\alpha}+b_{\alpha}=0.
\end{equation}

Let us now distinguish two cases.

\noindent {\bf Case 1.} Assume that  $x_\alpha\not=y_\alpha$. Let
us consider the non-zero vectors
$$\zeta_\alpha:=-2\alpha f'(d(x_\alpha,y_\alpha)^2)
\exp_{x_{\alpha}}^{-1}(y_{\alpha}),$$ and notice that $$L_{x_\alpha
\, y_\alpha} \zeta_\alpha=2\alpha f'(d(x_\alpha,y_\alpha)^2)
\exp_{y_{\alpha}}^{-1}(x_{\alpha}).$$

Since $u$ is a strict subsolution and $v$ is a supersolution of
$u_t+F(Du,D^2u)=0$,  we have that
\begin{equation*}
    a_\alpha+ F(\zeta_{\alpha},
P_\alpha)
    \leq\frac{-\varepsilon}{T^2}<0\leq
    -b_\alpha+ F(L_{x_\alpha
\, y_\alpha} \zeta_\alpha, Q_\alpha)
\end{equation*}
(notice that here we used continuity of $F$ off $\{\zeta=0\}$, and
the important observation that if
$(\zeta,A)\in\mathcal{\overline{P}}^{2,+}u(z)$ and $\zeta\neq 0$
then $(\zeta,A)$ is a limit of a sequence $(\zeta_{k}, A_{k})$ with
$(\zeta_{k}, A_{k})=(D\varphi_{k}(z_{k}), D^{2}\varphi_{k}(z_{k}))$
for some $\varphi_{k}\in\mathcal{A}(F)$ and $z_{k}\to z$).

Thus, there is $c:=\frac{\varepsilon}{T^2}$ such that
\begin{equation} \label{pre-Funder-Fupper}
    0<c\le F(L_{x_\alpha
\, y_\alpha} \zeta_\alpha ,Q_\alpha)-
    F(\zeta_\alpha, P_\alpha).
\end{equation}
On the other hand, since $F$ is translation invariant, we deduce
\begin{equation}  \label{Funder-Fupper}
    0<c \le F(L_{x_\alpha \, y_\alpha} \zeta_\alpha,Q_\alpha)-
F(\zeta_\alpha, P_\alpha) = F(\zeta_\alpha, L_{y_\alpha \,
x_\alpha}( Q_\alpha))-
    F(\zeta_\alpha, P_\alpha)
\end{equation}

\medskip

Recall that $A_\alpha=D^2\varphi_\alpha (s_\alpha, t_\alpha,
x_\alpha, y_\alpha)$ and $\Phi_\alpha=\alpha f\circ\Psi$. Then
\begin{align}
D_{x,y}\Phi_\alpha(x, y) &=\alpha f'(\Psi(x, y))\left(D_x\Psi(x,y),D_y\Psi(x,y)\right)= \\
&= -2\alpha f' (\Psi(x, y)) \left( \exp_x^{-1}y ,
\exp_y^{-1}x\right) \notag.
\end{align}
Now $D^2_{x,y}\varphi_\alpha$, the Hessian of $\varphi_\alpha$,
satisfies for every
 vector fields $X,Y$ on $M\times M$
\begin{align}
\label{D^2}D^2_{x,y}\varphi_\alpha(s,t,X,Y)&=\langle \nabla_X(
D\varphi_\alpha), Y \rangle =
\langle \nabla_X (\alpha f'(\Psi) D\Psi), \, Y \rangle \\
&=\alpha\langle  f'( \Psi ) \nabla_X(D\Psi)+ X(f'(\Psi))D\Psi, \,Y  \rangle  \notag \\
&= \alpha f'(\Psi)\langle \nabla_{X}(D\Psi), Y \rangle +\alpha X(f'(\Psi))\langle D\Psi, Y\rangle \notag \\
&= \alpha f'(\Psi) D^2_{x,y}\Psi(X,Y) +\alpha f''(\Psi)X(\Psi) \langle D\Psi, Y\rangle \notag \\
&= \alpha f'(\Psi) D^2_{x,y}\Psi(X,Y) +\alpha f''(\Psi) (D \Psi
\otimes D \Psi)(X,Y). \notag
\end{align}
In particular for every two points $x,y\in M$ such that
$d(x,y)<\min\{i_M(x),i_M(y)\}$ and every  $v\in TM_x$, we consider
$X=Y$ with $X(x,y)=(v,L_{xy}v)\in TM_x\times TM_y$ and we obtain
 \begin{align*}
 X(\Psi)(x,y)=D^2_{x,y}\Psi(x,y)(v,L_{xy}v)=D_x\Psi(x,y)(v)+
 D_y\Psi(x,y)(L_{xy}v)=0.
 \end{align*}
The last equality in the above expression is proved in \cite[Section
3]{AFS}. Therefore, if $M$ has  sectional curvature bounded below by
some constant $-K_0\le 0$, we obtain from equation (5.5) and
Proposition \ref{bound for A with no restriction on curvature} that
 \begin{align}\label{estimation of A}
 A_{\alpha} (s_\alpha, t_\alpha, x_{\alpha},y_{\alpha})(v,L_{x_{\alpha}y_{\alpha}}v)^2 &=
 D^2 {\varphi}_{\alpha}(s_\alpha, t_\alpha, x_{\alpha},y_{\alpha})
 (v,L_{x_{\alpha} y_{\alpha} } v)^2  \notag \\
 &= \alpha f'( \Psi(x_{\alpha},y_{\alpha}))
  D^2\Psi ( x_{\alpha} , y_{\alpha} )
 (v,L_{x_{\alpha} y_{\alpha}} v)^2 \notag \\
 &\le \alpha f'(\Psi(x_{\alpha},y_{\alpha}))2K_0 \Psi(x_{\alpha},y_{\alpha})||v||^2,
 \end{align}
 for every $v\in TM_{x_\alpha}$.
Let us denote by  $\lambda_1 \leq \cdots  \leq \lambda_n$  the
eigenvalues of the restriction of $A_{\alpha}$ to the subspace
$\mathcal{D}=\{(v, L_{x_\alpha y_\alpha}v) : v\in TM_{x_\alpha}\}$
of $TM_{x_\alpha}\times TM_{y_\alpha}$. The above inequality implies
that $\lambda_1,...,\lambda_n \le
 2\alpha K_0  \Psi(x_{\alpha},y_{\alpha}) f'(\Psi(x_{\alpha},y_{\alpha}))$.
 With our choice of
$\varepsilon_{\alpha}$, we have that
    $$
    \lambda_{i}+\varepsilon_{\alpha}\lambda_{i}^{2}\leq
    \lambda_{i}+\frac{1}{\,1+\sup_{1\leq j\leq n}|\lambda_{j}|\,}\,\lambda_{i}^{2}\leq
    \lambda_{i}+|\lambda_{i}|\le 2 \max\{0,\lambda_n \}, \quad i=1,...,n.
    $$
Since $\lambda_{i}+\varepsilon_{\alpha}\,\lambda_{i}^{2}$, $i=1,
..., n$, are the eigenvalues of
$\left(A_{\alpha}+\varepsilon_{\alpha}A_{\alpha}^{2}\right)|_{\mathcal{D}}$,
this means that when $M$ has nonnegative sectional curvature, that
is $K_0=0$, or equivalently $\lambda_n\le 0$, we have
\begin{equation*}
\left(A_{\alpha}+\varepsilon_{\alpha}A_{\alpha}^{2}\right)(v,L_{x_\alpha
y_\alpha}v)^{2}\leq 0.\end{equation*} Therefore, the second
inequality in \eqref{ineqcuadratic} implies $P_\alpha -L_{y_\alpha
\, x_\alpha}(Q_\alpha) \le
(A_{\alpha}+\varepsilon_{\alpha}A_{\alpha}^{2})|_{\mathcal{D}}\le
0$. Thus equation \eqref{Funder-Fupper}, and the fact that $F$ is
elliptic imply that
$$
  0<c \le  F( \zeta_\alpha,L_{y_\alpha \, x_\alpha}(Q_\alpha))-F(\zeta_\alpha, P_\alpha)\le 0,
$$
a contradiction.

\medskip

\noindent {\bf Case 2.} If we are not in Case 1 then we may assume
$x_\alpha=y_\alpha$ for every $\alpha>\alpha_0$. We know that $$
u(s,x)-v(t,y)-\alpha f(d(x,y)^2)-\alpha(t-s)^2\leq
u(s_{\alpha},x_{\alpha})-v(t_{\alpha},y_{\alpha})-\alpha(t_{\alpha}-s_{\alpha})^2$$
for all $(s,t,x,y)$. By taking $y=y_{\alpha}$, $t=t_{\alpha}$ we
get that the function $(s,x)\mapsto u(s,x)-\alpha
f(d(x,y_\alpha)^2)-\alpha(t_{\alpha}-s)^2$ has a maximum at
$(s_{\alpha}, x_{\alpha})$, which (bearing in mind that
$f'(0)=0=f''(0)$) yields
$$\left(-2\alpha(t_{\alpha}-s_{\alpha}), \, 0, 0 \right)\in\mathcal{P}^{\, 2,
+}u(s_\alpha,x_{\alpha}).$$ Similarly, we also deduce that
$$
\left(-2\alpha(t_{\alpha}-s_{\alpha}), \, 0, \, 0
\right)\in\mathcal{P}^{\, 2, -}v(t_\alpha,y_{\alpha}).
$$
Since $u$ is a strict subsolution and $v$ is a supersolution, we get
    $$
    -2\alpha (t_{\alpha}-s_{\alpha})\leq \frac{-\varepsilon}{T^2}<0\leq -2\alpha (t_{\alpha}-s_{\alpha}),
    $$
a contradiction.
\end{proof}

\medskip
The preceding proof can be easily modified to yield the following
more general results.

\begin{rem}
{\em  One can replace the compactness of $M$ in the statement of
Theorem \ref{compact} by the following condition on the behavior of
$u$ and $v$ at $\infty$:
\begin{equation}\label{condition on infinity}
\limsup_{(t,x)\to\infty} u(t,x)-v(t,x)\leq 0
\end{equation}
(this condition is meant to be empty when $M$ is compact).}
\end{rem}
In the case when $M$ does not have positive curvature, one can prove
the following.
\begin{thm}\label{general comparison result}
Let $M$ be a complete Riemannian manifold with sectional curvature
bounded below and positive injectivity radius. Let $F$ satisfy
conditions ({\bf A - D}) of Section 2. Assume furthermore that
there exist $f\in\mathcal{F}(F)$ and $C>0$ such that
\begin{equation} \label{condition on f}
tf'(t)\leq C f(t) \textrm{ for all } t>0, \end{equation} and that
$F$ satisfies the following uniform continuity assumption with
respect to the variable $D^2 u$:
\begin{equation}\label{condition on uniform continuity of F}
F(\zeta, P-\delta I)-F(\zeta, P)\xrightarrow{\delta \to 0} 0
\textrm{ uniformly on } \zeta, P.
\end{equation}
Let $u\in USC([0,T)\times M)$ be a subsolution and $v\in
LSC([0,T)\times M)$ be a supersolution of
\eqref{curvatureevolutioneq} on $M$. Suppose that $u \le v$ on
$\{0\}\times M$ and \begin{equation} \limsup_{(t,x)\to\infty}
u(t,x)-v(t,x)\leq 0.
\end{equation} Then $u \le v$ on $[0,T)\times M$.
\end{thm}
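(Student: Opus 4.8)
The plan is to run the proof of Theorem~\ref{compact} almost verbatim, using completeness and positivity of the injectivity radius — together with the hypothesis $\limsup_{(t,x)\to\infty}(u-v)\le 0$, exactly as in the Remark following Theorem~\ref{compact} — in place of compactness, and to modify only the single point where \emph{nonnegativity} of the sectional curvature was used. So I would first perform the same reductions: fix $S<T$ and argue on $[0,S]\times M$; replace $u$ by $\widetilde{u}=u-\varepsilon/(T-t)$, which is a strict subsolution with $\widetilde{u}_t+F(D\widetilde{u},D^2\widetilde{u})\le-\varepsilon/T^2$ (and $\widetilde{u}_t\le-\varepsilon/T^2$ where $D\widetilde{u}=0$) and $\widetilde{u}\to-\infty$ as $t\to T^-$; assume for contradiction that $\sup(\widetilde{u}-v)>0$, and form
\[
m_\alpha=\sup_{\substack{0\le s,t<T\\ x,y\in M}}\bigl\{\widetilde{u}(s,x)-v(t,y)-\alpha f(d(x,y)^2)-\alpha(t-s)^2\bigr\},
\]
now choosing $f\in\mathcal{F}(F)$ to be the particular function furnished by hypothesis \eqref{condition on f}. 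Arguing as in Theorem~\ref{compact}, the condition at infinity \eqref{condition on infinity} and Lemma~\ref{lemma-m-alphas} guarantee that $m_\alpha$ is finite and attained at points $(s_\alpha,t_\alpha,x_\alpha,y_\alpha)$ which, along a subsequence, lie in a fixed ball $x_\alpha,y_\alpha\in B(x_0,r/2)$ with $0<r<i(M)$ and conditions (B), (C) of Section~2 valid for $d(x,y)<r$; the cases $s_\alpha=0$ or $t_\alpha=0$ for infinitely many $\alpha$ are excluded exactly as there using $u\le v$ on $\{0\}\times M$, so we may assume $s_\alpha,t_\alpha>0$.

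Next I would apply Theorem~\ref{key to comparison} precisely as in the proof of Theorem~\ref{compact} (treating $s,t$ as auxiliary spatial variables and discarding the information on the second derivatives in $s,t$), obtaining bilinear forms $P_\alpha\in\mathcal{L}^2_s(TM_{x_\alpha})$, $Q_\alpha\in\mathcal{L}^2_s(TM_{y_\alpha})$, real numbers $a_\alpha,b_\alpha$ with $a_\alpha+b_\alpha=0$, the corresponding subjet/superjet inclusions, and inequality \eqref{ineqcuadratic} with $A_\alpha=D^2_{x,y}\varphi_\alpha$ and $\varepsilon_\alpha=(1+\|A_\alpha\|)^{-1}$. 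In Case~2 ($x_\alpha=y_\alpha$ for all large $\alpha$) nothing at all changes, since that argument never uses the geometry of $M$. In Case~1 ($x_\alpha\ne y_\alpha$), using that $\widetilde{u}$ is a strict subsolution, $v$ a supersolution, $F$ continuous off $\{\zeta=0\}$, and $F$ translation invariant, I would again reach \eqref{Funder-Fupper}, namely
\[
0<c\le F\bigl(\zeta_\alpha,L_{y_\alpha x_\alpha}(Q_\alpha)\bigr)-F(\zeta_\alpha,P_\alpha),\qquad c=\varepsilon/T^2,
\]
with $c>0$ \emph{fixed}, independent of $\alpha$.

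Here is the one new ingredient. Since now the sectional curvature is merely bounded below, $K\ge -K_0$ with possibly $K_0>0$, Proposition~\ref{bound for A with no restriction on curvature} yields, instead of $A_\alpha(v,L_{x_\alpha y_\alpha}v)^2\le 0$, the estimate
\[
A_\alpha(v,L_{x_\alpha y_\alpha}v)^2\le 2\alpha K_0\,\Psi(x_\alpha,y_\alpha)\,f'(\Psi(x_\alpha,y_\alpha))\,\|v\|^2=:\mu_\alpha\|v\|^2,\qquad \Psi=d(\cdot,\cdot)^2.
\]
By hypothesis \eqref{condition on f}, $\Psi f'(\Psi)\le C f(\Psi)$, so $\mu_\alpha\le 2K_0C\,\alpha f(d(x_\alpha,y_\alpha)^2)$, and Lemma~\ref{lemma-m-alphas}(1) makes the right-hand side tend to $0$; hence $\delta_\alpha:=2\max\{0,\mu_\alpha\}\to 0^+$. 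Restricting \eqref{ineqcuadratic} to $\mathcal{D}=\{(v,L_{x_\alpha y_\alpha}v):v\in TM_{x_\alpha}\}$ and running the eigenvalue computation exactly as in Theorem~\ref{compact} (each eigenvalue $\lambda_i$ of $A_\alpha|_{\mathcal{D}}$ satisfies $\lambda_i\le\mu_\alpha$, hence $\lambda_i+\varepsilon_\alpha\lambda_i^2\le 2\max\{0,\lambda_n\}\le\delta_\alpha$), I obtain $(A_\alpha+\varepsilon_\alpha A_\alpha^2)(v,L_{x_\alpha y_\alpha}v)^2\le\delta_\alpha\|v\|^2$, whence $P_\alpha-L_{y_\alpha x_\alpha}(Q_\alpha)\le\delta_\alpha I$, i.e. $L_{y_\alpha x_\alpha}(Q_\alpha)\ge P_\alpha-\delta_\alpha I$. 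Ellipticity of $F$ then gives $F(\zeta_\alpha,L_{y_\alpha x_\alpha}(Q_\alpha))\le F(\zeta_\alpha,P_\alpha-\delta_\alpha I)$, so by \eqref{Funder-Fupper}
\[
0<c\le F(\zeta_\alpha,P_\alpha-\delta_\alpha I)-F(\zeta_\alpha,P_\alpha).
\]
Since $\delta_\alpha\to 0^+$ while $c$ is a fixed positive number, the uniform continuity hypothesis \eqref{condition on uniform continuity of F} forces the right-hand side to be $<c$ for all large $\alpha$, a contradiction; this proves $u\le v$ on $[0,T)\times M$.

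I expect the main obstacle to be exactly this last maneuver, and it is what dictates the two extra hypotheses. Dropping nonnegative curvature makes the block $A_\alpha|_{\mathcal{D}}$ positive rather than $\le 0$, so one must first show that the error it introduces is asymptotically negligible — which is where the growth control $tf'(t)\le Cf(t)$ is used, in combination with $\alpha f(d(x_\alpha,y_\alpha)^2)\to 0$ — and then absorb the resulting small but nonzero perturbation $\delta_\alpha$ into the strict inequality $c>0$, which is precisely what uniform continuity of $F$ in its matrix argument permits. A subsidiary technical point (handled, as in Theorem~\ref{compact} and its Remark, via completeness, $i(M)>0$, and the behaviour of $u-v$ at infinity) is the localization of the maximizers $(s_\alpha,t_\alpha,x_\alpha,y_\alpha)$ to a fixed compact set, which is what makes the whole doubling-of-variables scheme available on the non-compact manifold $M$.
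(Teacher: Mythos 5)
Your proposal is correct and follows essentially the same route as the paper's own proof: rerun the argument of Theorem~\ref{compact}, use the estimate of Proposition~\ref{bound for A with no restriction on curvature} together with $tf'(t)\le Cf(t)$ and Lemma~\ref{lemma-m-alphas} to show the positive part of $(A_\alpha+\varepsilon_\alpha A_\alpha^2)|_{\mathcal D}$ is at most $\delta_\alpha I$ with $\delta_\alpha\to 0$, and then absorb this perturbation into the fixed constant $c>0$ via the uniform continuity hypothesis \eqref{condition on uniform continuity of F}. This matches the paper's argument step for step, including the handling of Case~2 and the localization of the near-maximizers.
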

\begin{proof}
Assume that the sectional curvature of $M$ is bounded below by
$-K_0$, with $K_{0}>0$. We have, with the notation used in the
proof of Theorem \ref{compact}, case 1, following equation
\eqref{estimation of A}, that $\lambda_{n}>0$, and
$$\lambda_{i}+\varepsilon_{\alpha}\lambda_{i}^{2}\leq
2\lambda_{n}\leq 4\alpha K_0  \Psi(x_{\alpha},y_{\alpha})
f'(\Psi(x_{\alpha},y_{\alpha})),$$ hence
$$    \left(A_{\alpha}+\varepsilon_{\alpha}A_{\alpha}^{2}\right)
(v,L_{x_\alpha y_\alpha}v)^{2}\leq 4\alpha K_0
\Psi(x_{\alpha},y_{\alpha}) f'(\Psi(x_{\alpha},y_{\alpha}))
||v||^2.$$
 Thus, inequality \eqref{ineqcuadratic} and condition \eqref{condition on f} imply that
 \begin{align}\label{uniform continuity of F}
 P_\alpha(v)^2-L_{y_\alpha x_\alpha}(Q_\alpha)(v)^2&=
 P_\alpha(v)^2-Q_\alpha(L_{x_\alpha y_\alpha}v)^2 \notag \\ &\le
   \left(A_{\alpha}+\varepsilon_{\alpha}A_{\alpha}^{2}\right)(v,L_{x_\alpha
   y_\alpha}v)^{2} \notag \\ &
\le 4\alpha K_0 \Psi(x_{\alpha},y_{\alpha})
f'(\Psi(x_{\alpha},y_{\alpha})) ||v||^2 \notag \\
& \leq 4\alpha K_0 C f(\Psi(x_{\alpha},y_{\alpha})) ||v||^2.
\end{align} Let us denote
$$\delta_\alpha:=4\alpha K_0 C f(\Psi(x_{\alpha},y_{\alpha})).$$ We have that $\lim_{\alpha \to
\infty}\delta_\alpha=0$. From \eqref{uniform continuity of F} we
obtain $P_\alpha -\delta_\alpha I \le
 L_{y_\alpha x_\alpha} (Q_\alpha)$.
  Then, equation \eqref{Funder-Fupper},
  the fact that $F$ is elliptic, and condition \eqref{condition on uniform continuity of
  F}
 imply that
\begin{align*}
  0<c&\le  F( \zeta_\alpha,L_{y_\alpha \, x_\alpha}(Q_\alpha))-F(\zeta_\alpha, P_\alpha) \\
    &\le F( \zeta_\alpha,P_\alpha -\delta_\alpha I)-F(\zeta_\alpha, P_\alpha)
    \xrightarrow{\alpha \to \infty} 0, \end{align*}
which again leaves us with a contradiction. The proof of case 2
parallels that in Theorem \ref{compact}.
\end{proof}

\begin{rem}
{\em Condition \eqref{condition on f} is always met when one is able
to take an $f$ of the form $$f(t)=t^{k},$$ with $k\geq 2$.
Therefore, in the cases when $F$ is given by the evolutions by mean
curvature or by Gaussian curvature, \eqref{condition on f} is
automatically satisfied.

On the other hand, condition \eqref{condition on uniform continuity
of F} is also clearly met by the function $F$ associated to the mean
curvature evolution problem. Indeed, in this case the function
$A\mapsto F(\zeta, A)$ is linear, so we have
$$
F(\zeta, P-\delta I)-F(\zeta, P)=-\delta F(\zeta, I)=\delta\,
\trace{\left(I-\frac{\zeta\otimes\zeta}{|\zeta|^{2}}\right)}\leq
\delta (n-1),
$$
where $n$ is the dimension of $M$. We thus recover Ilmanen's Theorem
from \cite{Ilmanen}:}
\end{rem}
\begin{cor}[Ilmanen]\label{Comparison for mean curvature}
Let $M$ be complete, with sectional curvature bounded below and
positive injectivity radius. Let $F$ be given by
\eqref{meancurvature2}. Let $u\in USC([0,T)\times M)$ be a
subsolution and $v\in LSC([0,T)\times M)$ be a supersolution of
\eqref{meancurvature} on $M$. Suppose that $u \le v$ on
$\{0\}\times M$ and $\limsup_{(t,x)\to\infty} u(t,x)-v(t,x)\leq
0$. Then $u \le v$ on $[0,T)\times M$.
\end{cor}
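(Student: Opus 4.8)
The plan is to deduce this corollary directly from Theorem~\ref{general comparison result} by verifying that the mean curvature function $F$ of \eqref{meancurvature2} meets all of its hypotheses; no new analysis is needed beyond the computations already indicated in Example~\ref{example mean curvature} and in the Remark preceding the statement. First I would recall from Example~\ref{example mean curvature} that $F(\zeta,A)=-\trace\bigl((I-\zeta\otimes\zeta/|\zeta|^2)A\bigr)$ is continuous on $J_0^2(M)$, (degenerate) elliptic, translation invariant, and geometric, so conditions ({\bf A}--{\bf D}) of Section~2 hold. The hypotheses on $M$ required by Theorem~\ref{general comparison result} (complete, sectional curvature bounded below, positive injectivity radius) and on $u,v$ (subsolution/supersolution, $u\le v$ on $\{0\}\times M$, $\limsup_{(t,x)\to\infty}(u-v)\le 0$) are exactly the ones in the corollary. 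Thus only the two extra structural conditions \eqref{condition on f} and \eqref{condition on uniform continuity of F} remain to be checked.

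For \eqref{condition on f} I would exhibit $f(t)=t^4$. It lies in $\mathcal{F}(F)$: it is $C^2$ on $[0,\infty)$ with $f(0)=f'(0)=f''(0)=0$ and $f''(t)=12t^2>0$ for $t>0$, and since $f'(|\zeta|)/|\zeta|=4|\zeta|^2\to 0$ while $F(\zeta,2I)=-2(n-1)$ and $F(\zeta,-2I)=2(n-1)$ are constant (hence bounded) as $|\zeta|\to0$, the limits in \eqref{limit property of f, F} vanish. Moreover $tf'(t)=4t^4=4f(t)$, so \eqref{condition on f} holds with $C=4$. For \eqref{condition on uniform continuity of F} I would use the linearity of $A\mapsto F(\zeta,A)$: for every $\zeta\neq 0$, $\delta>0$, and symmetric $P$,
$$
F(\zeta,P-\delta I)-F(\zeta,P)=-\delta\,F(\zeta,I)=\delta\,\trace\!\left(I-\frac{\zeta\otimes\zeta}{|\zeta|^2}\right)=\delta(n-1),
$$
which tends to $0$ as $\delta\to 0$ uniformly in $(\zeta,P)$, where $n=\dim M$. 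Hence \eqref{condition on uniform continuity of F} is satisfied.

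With all the hypotheses of Theorem~\ref{general comparison result} verified for this particular $F$, that theorem yields $u\le v$ on $[0,T)\times M$, which is precisely the assertion of the corollary. The only ``obstacle'' here is bookkeeping: one must make sure that the mean curvature $F$ genuinely falls under the framework ({\bf A}--{\bf D}) and that the choice $f(t)=t^4$ simultaneously lies in $\mathcal{F}(F)$ and fulfils \eqref{condition on f}; once these routine checks are in place the corollary is a literal specialization of Theorem~\ref{general comparison result} and nothing further needs to be proved.
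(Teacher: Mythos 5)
Your proposal is correct and follows essentially the same route as the paper: the authors also deduce the corollary from Theorem~\ref{general comparison result} by noting that $f(t)=t^4\in\mathcal{F}(F)$ gives \eqref{condition on f} with a constant $C$, and that linearity of $A\mapsto F(\zeta,A)$ yields $F(\zeta,P-\delta I)-F(\zeta,P)=\delta\,\trace\bigl(I-\zeta\otimes\zeta/|\zeta|^2\bigr)\le\delta(n-1)$, so \eqref{condition on uniform continuity of F} holds. All your computations check out.
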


Unfortunately, condition \eqref{condition on uniform continuity of
F} in Theorem \ref{general comparison result} is not satisfied by
the function $F$ given by \eqref{F for Gaussian curvature}
corresponding to the evolution of level sets by Gaussian curvature.
In this case, we can only apply Theorem \ref{compact} in order to
deduce a comparison result for manifolds of nonnegative curvature:
\begin{cor}\label{comparison for Gaussian curvature}
Let $M$ be a complete Riemannian manifold of nonnegative sectional
curvature and positive injectivity radius. Let $F$ be given by
\eqref{F for Gaussian curvature}. Let $u\in USC([0,T)\times M)$ be
a subsolution and $v\in LSC([0,T)\times M)$ be a supersolution of
\eqref{positiveGaussiancurvature} on $M$. Suppose that $u \le v$
on $\{0\}\times M$ and $\limsup_{(t,x)\to\infty} u(t,x)-v(t,x)\leq
0$. Then $u \le v$ on $[0,T)\times M$.
\end{cor}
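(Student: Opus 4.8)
The plan is to obtain this as a special case of Theorem \ref{compact}, as extended to non-compact $M$ by the Remark immediately following it, so that essentially the only work is to check that $F$ from \eqref{F for Gaussian curvature} fits the hypotheses ({\bf A}--{\bf D}) of Section 2 and that the relevant classes of test functions are non-empty.

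First I would record that the function $F(\zeta,A)=-|\zeta|\,\det_{+}\big(\tfrac{1}{|\zeta|}S_\zeta A+\tfrac{\zeta\otimes\zeta}{|\zeta|^{2}}\big)$, with $S_\zeta=I-\zeta\otimes\zeta/|\zeta|^{2}$, satisfies ({\bf A})--({\bf D}), exactly as sketched in the Example following \eqref{Gaussiancurvature}: it is continuous on $J_0^2(M)$ since $\det_{+}$ is continuous on symmetric forms (the singularity lies only at $\zeta=0$); it is degenerate elliptic because passing from $A$ to $B\ge A$ only increases the eigenvalues of the symmetric form $\pi_{\zeta^{\perp}}A\pi_{\zeta^{\perp}}$ obtained after projection onto $\zeta^{\perp}$, and each factor $\max\{\lambda_j,0\}$ of $\det_{+}$ is nondecreasing and nonnegative, so $\det_{+}$ increases and the minus sign in $F$ reverses this to $F(\zeta,B)\le F(\zeta,A)$; it is translation invariant because $L_{xy}$ is an isometry, so by \eqref{MCEInvariant} and the invariance of the eigenvalues (hence of $\det_{+}$) under conjugation by $L_{xy}$ one argues exactly as in Example \ref{example mean curvature}; and ({\bf D}) is automatic since $F$ has the form \eqref{FandG}. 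I would also recall that $\mathcal{F}(F)$ contains $f(t)=t^{2n}$ with $n=\dim M$ — this is the Example preceding Definition \ref{def of the A(F) class} — so $\mathcal{F}(F)$ and $\mathcal{A}(F)$ are non-empty even though $M$ need not be compact, and the notion of $\mathcal{F}$-solution used in the statement is meaningful.

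With this verified, I would run the proof of Theorem \ref{compact} essentially verbatim, using the Remark after it to replace compactness by the hypotheses that $i(M)>0$ and $\limsup_{(t,x)\to\infty}(u-v)\le 0$: these suffice to reduce to the case that $u$ and $-v$ are bounded above, to produce the penalised maxima $(s_\alpha,t_\alpha,x_\alpha,y_\alpha)$ with $x_\alpha,y_\alpha$ eventually in a common geodesic ball of radius $<i(M)$, and to apply Lemma \ref{lemma-m-alphas} (stated for general metric spaces). The decisive use of the curvature hypothesis occurs in Case 1: since $M$ has \emph{nonnegative} sectional curvature one may take $K_0=0$, so \eqref{estimation of A} forces all the eigenvalues $\lambda_i$ there to be $\le 0$, whence $(A_\alpha+\varepsilon_\alpha A_\alpha^{2})|_{\mathcal D}\le 0$; the second inequality in \eqref{ineqcuadratic} then gives $P_\alpha\le L_{y_\alpha x_\alpha}(Q_\alpha)$, and feeding this into \eqref{Funder-Fupper} together with ellipticity of $F$ produces $0<c\le F(\zeta_\alpha,L_{y_\alpha x_\alpha}(Q_\alpha))-F(\zeta_\alpha,P_\alpha)\le 0$, a contradiction. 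Case 2 ($x_\alpha=y_\alpha$) is identical to that in Theorem \ref{compact}, using only $f'(0)=f''(0)=0$ and the strict subsolution/supersolution inequalities in the variables $s,t$.

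The point to keep in mind — and the reason the more general Theorem \ref{general comparison result} cannot be invoked instead — is that the Gaussian $F$ of \eqref{F for Gaussian curvature} is \emph{not} uniformly continuous in $D^2u$ in the sense of \eqref{condition on uniform continuity of F}: the higher-order singularity of $\det_{+}$ at $\zeta=0$ destroys that estimate. Consequently the comparison must be routed through the nonnegative-curvature argument of Theorem \ref{compact}, where the term $(A_\alpha+\varepsilon_\alpha A_\alpha^{2})|_{\mathcal D}$ is handled because it is nonpositive outright rather than merely small; this is exactly the step where $K_0=0$ is indispensable. Granting this, the conclusion $u\le v$ on $[0,T)\times M$ follows.
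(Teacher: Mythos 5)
Your proposal is correct and follows essentially the same route as the paper: the paper obtains this corollary exactly by noting that the Gaussian $F$ of \eqref{F for Gaussian curvature} satisfies ({\bf A}--{\bf D}) with $\mathcal{F}(F)\neq\emptyset$ (e.g.\ $f(t)=t^{2n}$) but fails the uniform continuity condition \eqref{condition on uniform continuity of F}, so comparison must be deduced from Theorem \ref{compact} (as extended to noncompact $M$ by the Remark, using $i(M)>0$ and the limsup condition at infinity) rather than from Theorem \ref{general comparison result}. Your additional detail on ellipticity via monotonicity of the eigenvalues and of $\det_{+}$, and on where $K_0=0$ enters, is consistent with the paper's (sketched) verification.
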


Given the form of the equation \eqref{curvatureevolutioneq}, it
immediately follows that, in all cases where comparison holds, one
has continuous dependence of solutions with respect to initial
data.

\begin{rem}\label{continuous dependence of solutions with respect to initial
data} If $u, v$ are solutions with initial conditions $g$ and $h$
respectively, and $\|g-h\|_{L^{\infty}(M)}\leq\varepsilon$, then
$\|u-v\|_{L^{\infty}(M\times [0, T))}\leq\varepsilon$.
\end{rem}

\bigskip

\section{Existence by Perron's Method}

We will have to use the following estimation for the second
derivative of the distance to a fixed point.
\begin{lem}\cite[p. 153]{Sakai}\label{Sakais lemma}
Let $M$ be a complete Riemannian manifold whose sectional curvature
$K$ satisfies $\delta\leq K\leq\Delta$. Suppose
$0<r<\min\{i_{M}(x_{0}), \pi/2\sqrt{\Delta}\}$. Then, for all $x\in
B(x_{0}, r)$ and $v\bot\nabla d(\cdot, x_{0})(x)$, one has
$$
\frac{c_{\Delta}(d(x,x_{0}))}{s_{\Delta}(d(x,x_{0}))}\langle v,
v\rangle \leq D^{2}d(\cdot, x_{0})(x)(v,v)\leq
\frac{c_{\delta}(d(x,x_{0}))}{s_{\delta}(d(x,x_{0}))}\langle v,
v\rangle,
$$
and the gradient $\nabla d(\cdot, x_{0})(x)$ belongs to the null
space of $D^{2}d(\cdot, x_{0})(x)$.
\end{lem}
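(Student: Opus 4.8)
This is the classical Hessian comparison theorem for the Riemannian distance function, and a self-contained proof is available in \cite{Sakai}; here I indicate the route I would follow. Fix $x_{0}$ and write $\rho(x):=d(x,x_{0})$. Since $0<r<i_{M}(x_{0})$, the function $\rho$ is smooth on $B(x_{0},r)\setminus\{x_{0}\}$ and $\langle\nabla\rho,\nabla\rho\rangle\equiv 1$; differentiating this identity in an arbitrary direction gives $D^{2}\rho(\nabla\rho,\,\cdot\,)\equiv 0$, which is the final assertion of the lemma and reduces everything to bounding $D^{2}\rho(x)(v,v)$ for $v\perp\nabla\rho(x)$. Fix such an $x$, put $\rho_{0}:=\rho(x)\in(0,r)$, and let $\gamma\colon[0,\rho_{0}]\to M$ be the unit-speed minimizing geodesic from $x_{0}$ to $x$. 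Identifying $D^{2}\rho$ at $\gamma(\rho_{0})$ with the shape operator of the geodesic sphere $S(x_{0},\rho_{0})$ (equivalently, differentiating the first variation of arclength) one gets
$$D^{2}\rho(x)(v,v)=\langle J'(\rho_{0}),J(\rho_{0})\rangle=I_{\gamma}(J,J),$$
where $J$ is the unique Jacobi field along $\gamma$, normal to $\gamma'$, with $J(0)=0$ and $J(\rho_{0})=v$ (this exists since $\gamma$ has no conjugate points in $(0,\rho_{0}]$, because $\rho_{0}<i_{M}(x_{0})$), and $I_{\gamma}(Z,Z)=\int_{0}^{\rho_{0}}\big(|Z'|^{2}-\langle R(Z,\gamma')\gamma',Z\rangle\big)\,dt$ is the index form; the second equality is integration by parts against the Jacobi equation.

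For the upper bound I would invoke the index lemma: among all piecewise-$C^{1}$ normal fields $Z$ along $\gamma$ with $Z(0)=0$, $Z(\rho_{0})=v$, the Jacobi field $J$ minimizes $I_{\gamma}$. Choose $E$ parallel along $\gamma$ with $|E|\equiv 1$, $E(\rho_{0})=v/|v|$, and plug in $Z(t):=\big(s_{\delta}(t)/s_{\delta}(\rho_{0})\big)E(t)$; this is admissible because $\rho_{0}<\pi/(2\sqrt{\Delta})\le\pi/(2\sqrt{\delta})$ keeps $s_{\delta}>0$ on $(0,\rho_{0}]$ when $\delta>0$ (and trivially when $\delta\le 0$). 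Since $E\perp\gamma'$ and $|E|=|\gamma'|=1$ we have $\langle R(E,\gamma')\gamma',E\rangle=K(E,\gamma')\ge\delta$, hence, using $\tfrac{d}{dt}(s_{\delta}s_{\delta}')=s_{\delta}'^{2}-\delta s_{\delta}^{2}$ and $s_{\delta}'=c_{\delta}$,
$$I_{\gamma}(Z,Z)\le\frac{1}{s_{\delta}(\rho_{0})^{2}}\int_{0}^{\rho_{0}}\big(s_{\delta}'(t)^{2}-\delta\, s_{\delta}(t)^{2}\big)\,dt=\frac{1}{s_{\delta}(\rho_{0})^{2}}\Big[s_{\delta}s_{\delta}'\Big]_{0}^{\rho_{0}}|v|^{2}=\frac{c_{\delta}(\rho_{0})}{s_{\delta}(\rho_{0})}\,|v|^{2}.$$
Therefore $D^{2}\rho(x)(v,v)=I_{\gamma}(J,J)\le I_{\gamma}(Z,Z)\le\frac{c_{\delta}(d(x,x_{0}))}{s_{\delta}(d(x,x_{0}))}\langle v,v\rangle$.

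The lower bound is the more delicate half, and it is where the hypothesis $r<\pi/(2\sqrt{\Delta})$ is really used: it guarantees that in the model space of constant curvature $\Delta$ the function $s_{\Delta}$ has no zero in $(0,\rho_{0}]$ (no conjugate points) and that $c_{\Delta}>0$ there, so the asserted lower bound is a genuine positive quantity. Here I would appeal to the Rauch comparison theorem in its logarithmic-derivative form: comparing the Jacobi field $J$ above (along $\gamma$ in $M$, where $K\le\Delta$) with the model Jacobi field $\bar J(t):=\big(s_{\Delta}(t)/s_{\Delta}(\rho_{0})\big)\bar E(t)$ in the $\Delta$-space form, one obtains
$$\frac{\langle J'(\rho_{0}),J(\rho_{0})\rangle}{|J(\rho_{0})|^{2}}\ \ge\ \frac{\langle\bar J'(\rho_{0}),\bar J(\rho_{0})\rangle}{|\bar J(\rho_{0})|^{2}}=\frac{c_{\Delta}(\rho_{0})}{s_{\Delta}(\rho_{0})},$$
and since $|J(\rho_{0})|=|v|$ this yields $D^{2}\rho(x)(v,v)\ge\frac{c_{\Delta}(d(x,x_{0}))}{s_{\Delta}(d(x,x_{0}))}\langle v,v\rangle$, finishing the proof. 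The step I expect to require the most care is precisely this Rauch-type estimate: the clean scalar Riccati argument that gives the upper bound (for a parallel unit field $u$, $\tfrac{d}{dt}\langle Su,u\rangle=-|Su|^{2}-\langle R(u,\gamma')\gamma',u\rangle\le-\langle Su,u\rangle^{2}-\delta$, by Cauchy--Schwarz) runs the wrong way for the lower bound, so one must either carry out the genuinely matrix-valued Riccati comparison on $[\epsilon,\rho_{0}]$ and let $\epsilon\downarrow 0$, controlling the $\tfrac1t\,\mathrm{Id}+o(\tfrac1t)$ singularity of the shape operator at $x_{0}$, or quote the monotonicity part of the Rauch comparison theorem directly; either way this is the only point that is not a one-line computation, and it is exactly the content taken over from \cite{Sakai}.
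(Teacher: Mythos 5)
Your argument is correct, but note that the paper does not prove this lemma at all: it is quoted verbatim from the cited reference (Sakai, p.~153), so there is no in-paper proof to compare against. What you have written is the standard Hessian comparison argument and is essentially the textbook proof: the identity $D^{2}d(\cdot,x_0)(v,v)=\langle J'(\rho_0),J(\rho_0)\rangle=I_{\gamma}(J,J)$, the index lemma with the test field $\bigl(s_{\delta}(t)/s_{\delta}(\rho_0)\bigr)E(t)$ for the upper bound, and a Rauch-type (or matrix Riccati) comparison for the lower bound. All steps check out: the radial direction lies in the null space because $|\nabla d|\equiv 1$; the absence of conjugate points on $[0,\rho_0]$ follows from $\rho_0<i_M(x_0)$, which legitimizes both the uniqueness of $J$ and the index lemma; and $\rho_0<\pi/(2\sqrt{\Delta})$ keeps $s_{\Delta}>0$ (indeed the comparison itself only needs $\rho_0<\pi/\sqrt{\Delta}$; the stronger bound additionally makes $c_{\Delta}>0$, so the asserted lower bound is positive, which is what the application in Section 5 uses). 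Two cosmetic points: with $E(\rho_0)=v/|v|$ your field $Z$ has $Z(\rho_0)=v/|v|$ rather than $v$, so either rescale $Z$ by $|v|$ or invoke homogeneity before the factor $|v|^{2}$ appears in the display; and the logarithmic-derivative form of Rauch that you invoke for the lower bound is exactly the nontrivial content delegated to the reference, which is acceptable here since the paper itself only cites the result.
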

Here $s_{\delta}$ and $c_{\delta}$ are defined by $$
s_{\delta}(t):=\left\{  \begin{array}{ll}
                                                       (\sin(\sqrt{\delta}\, t))/\sqrt{\delta}, & \hbox{ $\delta >0$;} \\
                                                       t, & \hbox{$\delta =0$;} \\
                                                       (\sinh(\sqrt{|\delta|}\, t))/\sqrt{\delta}, & \hbox{ $\delta <0$,}
                                                     \end{array}
                                                   \right.$$ and $$
c_{\delta}(t):=\left\{ \begin{array}{ll} \cos(\sqrt{\delta}\, t), & \hbox{$\delta>0$;} \\
                                                       1, & \hbox{$\delta =0$;} \\
                                                       \cosh(\sqrt{|\delta|}\, t), & \hbox{$\delta<0$.}
                                                     \end{array}
                                                   \right. $$
Notice that \begin{equation}\label{limit of tc(t)/s(t)} \lim_{t\to
0}\frac{t\, c_{\Delta}(t)}{s_{\Delta}(t)}=1.\end{equation}

\begin{prop}\label{closed under sup}
Let $F$ satisfy conditions ({\bf A - D}) of Section 2, and assume
$\mathcal{F}(F)\neq\emptyset$. Let $\mathcal{S}$ be a nonempty
family of subsolutions of
\begin{equation}\label{eqPerron}
u_{t}+F(Du, D^2 u)=0,
\end{equation}
and define $$W(z):=\sup\{v(z) : v\in\mathcal{S} \}.$$ Suppose that
$W^{*}(z)<+\infty$ for all $z\in [0,T)\times M$. Then $W^{*}$ is a
subsolution of \eqref{eqPerron} on $[0, T)\times M$.
\end{prop}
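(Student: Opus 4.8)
The plan is to adapt the classical proof that the upper envelope of a family of subsolutions is a subsolution (as in \cite[Lemma 4.2]{CIL}) to the present Riemannian, $\mathcal{F}$-solution setting, paying attention to the two distinct cases $D\varphi(z_0)\neq 0$ and $D\varphi(z_0)=0$ that appear in the definition of an $\mathcal{F}$-subsolution. First I would fix $\varphi\in\mathcal{A}(F)$ and a point $z_0=(t_0,x_0)\in(0,T)\times M$ at which $W^*-\varphi$ attains a local maximum; replacing $\varphi$ by $\varphi+\varepsilon d(x,x_0)^4 + \varepsilon(t-t_0)^4$ we may assume the maximum is strict, and by subtracting a constant we may assume $W^*(z_0)=\varphi(z_0)$. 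The standard envelope argument then produces sequences: by definition of $W^*$ there are points $\zeta_k\to z_0$ and functions $v_k\in\mathcal{S}$ with $v_k(\zeta_k)\to W^*(z_0)$. Using the strict local maximum, one shows that the points $z_k$ where $v_k-\varphi$ attains its local maximum over a fixed small closed ball $\overline{B}(z_0,\rho)$ converge to $z_0$, and that $v_k(z_k)\to W^*(z_0)=\varphi(z_0)$; this is where upper semicontinuity of each $v_k$ and of $W^*$, together with $W^*\geq v_k$, are used in the routine way. Here all distances, balls and the notion of local maximum make sense intrinsically on $M$, so no chart is needed for this part.

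Next I would exploit that each $v_k$ is a subsolution and that $\varphi\in\mathcal{A}(F)$ is an admissible test function for \emph{every} interior point, in particular for $z_k$. Since $v_k-\varphi$ has a local maximum at $z_k$, the definition of viscosity subsolution gives
\[
\begin{cases}
\varphi_t(z_k)+F(D\varphi(z_k),D^2\varphi(z_k))\le 0 & \text{if } D\varphi(z_k)\neq 0,\\
\varphi_t(z_k)\le 0 & \text{if } D\varphi(z_k)=0.
\end{cases}
\]
Now I distinguish the two cases according to $D\varphi(z_0)$. If $D\varphi(z_0)\neq 0$, then for $k$ large $D\varphi(z_k)\neq 0$ by continuity of $D\varphi$, and since $F$ is continuous on $J_0^2(M)$ (property (\textbf{A})) and $(D\varphi(z_k),D^2\varphi(z_k))\to(D\varphi(z_0),D^2\varphi(z_0))$ with $D\varphi(z_0)\neq 0$, passing to the limit yields $\varphi_t(z_0)+F(D\varphi(z_0),D^2\varphi(z_0))\le 0$, which is the required subsolution inequality at $z_0$.

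The delicate case is $D\varphi(z_0)=0$, where we must only conclude $\varphi_t(z_0)\le 0$ but cannot pass to the limit directly in $F$ because of its singularity at $\zeta=0$. Here is where the $\mathcal{A}(F)$ structure is essential: since $\varphi\in\mathcal{A}(F)$ and $D\varphi(z_0)=0$, there exist $\delta>0$, $f\in\mathcal{F}$ and a modulus $w$ with $w(r)/r\to 0$ such that $|\varphi(z)-\varphi(z_0)-\varphi_t(z_0)(t-t_0)|\le f(d(x,x_0))+w(|t-t_0|)$ near $z_0$. Along the subsequence of indices $k$ with $D\varphi(z_k)\neq 0$ (if infinitely many exist), the relation $\varphi_t(z_k)+F(D\varphi(z_k),D^2\varphi(z_k))\le 0$ together with the bound on $D^2\varphi(z_k)$ (note $D^2\varphi(z_0)=0$ by the remark following Definition \ref{def of the A(F) class}, so $D^2\varphi(z_k)\to 0$) and the limit property \eqref{limit property of f, F} of $f\in\mathcal{F}$ — which forces $\frac{f'(|\zeta|)}{|\zeta|}F(\zeta,\pm 2I)\to 0$, hence by ellipticity $F(D\varphi(z_k),D^2\varphi(z_k))\to 0$ once one checks $D^2\varphi(z_k)$ is squeezed between $\mp\frac{f'(|D\varphi(z_k)|)}{|D\varphi(z_k)|}2I$ using the $\mathcal{A}(F)$-estimate on $\varphi$ — gives $\varphi_t(z_0)\le 0$ in the limit. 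Along the complementary subsequence where $D\varphi(z_k)=0$ we directly have $\varphi_t(z_k)\le 0$, so $\varphi_t(z_0)\le 0$. Either way we obtain $\varphi_t(z_0)\le 0$, completing the proof. The main obstacle, as expected, is precisely this last step: showing that $F(D\varphi(z_k),D^2\varphi(z_k))\to 0$ (or at least $\liminf\ge$ something harmless) despite the singularity, which is exactly what the class $\mathcal{A}(F)$ and the defining property of $\mathcal{F}(F)$ were designed to handle; the argument mirrors \cite[Chapter 4]{Giga} for the Euclidean $\mathcal{F}$-solution theory, with Lemma \ref{Sakais lemma} available if one needs to control second derivatives of the distance function entering the estimates.
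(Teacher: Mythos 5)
Your Case 1 is essentially the paper's argument and is fine, with one caveat: to make the maximum strict you should perturb by $f(d(x,x_0))+(t-t_0)^4$ with $f\in\mathcal{F}(F)$ rather than by $\varepsilon d(x,x_0)^4$, since the perturbed function must remain in $\mathcal{A}(F)$ in order to be a legitimate test function for the $v_k$'s, and $t\mapsto t^4$ need not belong to $\mathcal{F}(F)$ for very singular $F$ (e.g.\ the Gaussian curvature flow in dimension $n>2$).

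The genuine gap is in Case 2, at the step where you assert that $D^2\varphi(z_k)$ is ``squeezed between $\mp\frac{f'(|D\varphi(z_k)|)}{|D\varphi(z_k)|}2I$ using the $\mathcal{A}(F)$-estimate on $\varphi$.'' The admissibility condition in Definition \ref{def of the A(F) class} is a zeroth-order bound on the values of $\varphi$ near $z_0$; it gives no pointwise control of $D^2\varphi(z_k)$ at nearby points $z_k\neq z_0$ in terms of $|D\varphi(z_k)|$. Knowing only that $D\varphi(z_k)\to 0$ and $D^2\varphi(z_k)\to 0$ is not enough to conclude $F(D\varphi(z_k),D^2\varphi(z_k))\to 0$ for a singular $F$: the value of $F$ depends on the \emph{rate} at which $D^2\varphi(z_k)$ vanishes relative to $|D\varphi(z_k)|$, and a $C^2$ function satisfying the $\mathcal{A}(F)$ value estimate can perfectly well have $|D\varphi(z_k)|$ anomalously small compared to $d(x_k,x_0)$ at the maximum points, making $F$ blow up. The missing idea --- and the device the paper uses --- is to discard $\varphi$ altogether in this case and test $v_k$ against the explicit radial majorant $\psi_k(t,x)=\varphi_t(z_0)(t-t_0)+2f(d(x,x_0))+2\omega_k(t-t_0)$ furnished by the admissibility estimate (with $\omega_k$ a $C^2$ smoothing of $\omega$): since $W^*-\psi$ still has a strict local maximum at $z_0$, one may run the envelope argument with $\psi_k$, and now $D\psi_k$ and $D^2\psi_k$ have the explicit form $\frac{2f'(|\zeta_k|)}{|\zeta_k|}\zeta_k$ and $2f''\,\frac{\zeta_k\otimes\zeta_k}{|\zeta_k|^2}+2f'\,D^2d(\cdot,x_0)$, so that geometricity, ellipticity, the Hessian comparison of Lemma \ref{Sakais lemma}, and property \eqref{limit property of f, F} of $f\in\mathcal{F}(F)$ combine to give $F(D\psi_k(z_k),D^2\psi_k(z_k))\to 0$ and hence $\varphi_t(z_0)\le 0$. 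Without this substitution the limit passage in your Case 2 does not go through.
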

\begin{proof}
Let $\varphi\in\mathcal{A}(F)$ be such that $W^{*}-\varphi$ has a
strict maximum at $z_{0}=(t_{0}, x_{0})$. We may assume that
$W^{*}(z_{0})-\varphi(z_{0})=0$.

\noindent{\bf Case 1.} Suppose first that $D\varphi(z_{0})\neq 0$,
and let us see that $$\varphi_{t}(z_{0})+F(D\varphi(z_{0}), D^2
\varphi(z_{0}))\leq 0.$$ Define
$\psi(t,x):=\varphi(t,x)+f(d(x,x_{0}))+(t-t_{0})^4$, where
$f\in\mathcal{F}(F)$, and observe that
\begin{equation}\label{subperron1}
W^{*}(z)-\psi(z)\leq -f(d(x, x_{0}))-(t-t_{0})^4.\end{equation} Also
notice that $\psi\in\mathcal{A}(F)$,
$\psi_{t}(z_{0})=\varphi_{t}(z_{0})$,
$D\psi(z_{0})=D\varphi(z_{0})$, and
$D^2\psi(z_{0})=D^2\varphi(z_{0})$.

By definition of $W^{*}$ there exist $z'_{k}$ such that
$\lim_{k\to\infty}z'_{k}=z_{0}$ and
$$\alpha_{k}:=W^{*}(z'_{k})-\psi(z'_{k})\to
W^{*}(z_0)-\psi(z_0)=0.$$ Now, by definition of $W$, there exists a
sequence $(v_{k})\subset\mathcal{S}$ such that
$v_{k}(z'_{k})>W(z'_{k})-\frac{1}{k}$, which implies
\begin{equation}\label{subperron2}
(v_{k}-\psi)(z'_{k})>a_{k}-\frac{1}{k}.
\end{equation}
Since $v_{k}\leq W$, \eqref{subperron1} implies
\begin{equation}\label{subperron3}
(v_{k}-\psi)(z)\leq -f(d(x, x_{0}))-(t-t_{0})^4 \textrm{ for all }
z.
\end{equation}
Let $B$ be a closed ball of center $z_{0}$. Since $v_{k}-\psi$ is
upper semicontinuous it attains its maximum on $B$ at some point
$z_{k}\in B$. From \eqref{subperron2} and \eqref{subperron3} we get
$$\alpha_{k}-\frac{1}{k}< (v_{k}-\psi)(z'_{k})\leq
(v_{k}-\psi)(z_{k})\leq -f(d(x_{k},x_{0}))-(t_{k}-t_{0})^4\leq 0,$$
and since $\alpha_{k}\to 0$ we deduce that $z_{k}\to z_{0}$ and
$t_{k}\to t_{0}$. Moreover, $v_{k}-\psi$ has a local maximum at
$z_{k}$.

Since $D\psi(z_{0})=D\varphi(z_{0})\neq 0$, we have $D\psi(z)\neq 0$
for all $z$ in a neighborhood (which we may assume to be $B$) of
$z_{0}$. Because $v_{k}$ is a subsolution and $D\varphi(z_{k})\neq
0$, we get
$$
\psi_{t}(z_{k})+F(D\psi(z_{k}), D^2 \psi(z_{k}))\leq 0.
$$
Therefore, by taking limits and using the continuity of $F$ off
$\{\zeta =0\}$ and the continuity of $\psi_t$, $D\psi$, $D^2\psi$,
we obtain
$$
\varphi_{t}(z_{0})+F(D\varphi(z_{0}), D^2 \varphi(z_{0}))=
\psi_{t}(z_{0})+F(D\psi(z_{0}), D^2 \psi(z_{0}))\leq 0,
$$
and we conclude that $W^{*}$ is a subsolution of \eqref{eqPerron} at
$z_{0}$.

\medskip

\noindent{\bf Case 2.} Assume now that $D\varphi(z_{0})= 0$, and
let us check that $\varphi_{t}(z_{0})\leq 0$. Since
$\varphi\in\mathcal{A}(F)$, there exist $\delta_{0}>0$, $\omega\in
C(\mathbb{R})$ with $\omega(r)=o(r)$, and $f\in\mathcal{F}(F)$
such that
\begin{equation}\label{subperron4}|\varphi(x,t)-\varphi(z_{0})-\varphi_{t}(z_{0})(t-t_{0})|\leq
f(d(x,x_{0}))+\omega(t-t_{0})\end{equation} for all $z=(t,x)\in
B:=B(z_{0}, \delta_{0})$. We may assume that $ \omega\in
C^1(\mathbb{R})$, $\omega(0)=0=\omega'(0)$, and $\omega(r)>0$ for
$r>0$. Let us define
$$
\psi(t,x):=\varphi_{t}(z_{0})(t-t_{0})+2f(d(x,
x_{0}))+2\omega(t-t_{0}), \textrm{ and }$$
$$
\psi_{k}(t,x):=\varphi_{t}(z_{0})(t-t_{0})+2f(d(x,
x_{0}))+2\omega_{k}(t-t_{0}),$$ where $(\omega_{k})$ is a sequence
of $C^2$ functions on $\mathbb{R}$ such that $\omega_{k}\to
\omega$ and $\omega'_{k}\to \omega'$ uniformly on $\mathbb{R}$.

From \eqref{subperron4} we deduce that $W^{*}-\psi$ has a local
strict maximum at $z_{0}$. On the other hand it is clear that
$(\psi_{k})\subset\mathcal{A}(F)$, and $\psi_{k}\to\psi$ uniformly.
Arguing as in Case 1, we may find a sequence of subsolutions
$(v_{k})\subset \mathcal{S}$ and a sequence of points $z_{k}$ such
that $z_{k}\to z_{0}$ and $v_{k}-\psi_{k}$ attains a maximum at
$z_{k}$. Since $v_{k}$ is a subsolution we have
\begin{equation}\label{psik is subsolution}
(\psi_{k})_{t}(z_{k})+F(D\psi_{k}(z_{k}), D^{2}\psi_{k}(z_{k}))\leq
0 \, \textrm{ for all } k, \, \textrm{ when } x_{k}\neq x_{0}, \,
\textrm{ and }
\end{equation}
\begin{equation}\label{psik is subsolution even when the derivative
vanishes} (\psi_{k})_{t}(z_{k})\leq 0, \, \textrm{ when } x_{k}=
x_{0}.
\end{equation}
Notice that \begin{equation}\label{convergence of psikt}
\lim_{k\to\infty}(\psi_{k})_{t}(z_{k})=
\varphi_{t}(z_{0}).\end{equation} If $x_{k}=x_{0}$ for infinitely
many $k$'s, we immediately deduce from \eqref{psik is subsolution
even when the derivative vanishes} and \eqref{convergence of
psikt} that $\varphi_{t}(z_{0})\leq 0$.

\medskip

Therefore we may assume that $x_{k}\neq x_{0}$ for all $k$.
If we set \begin{align*}& \zeta_{k}:=-\exp_{x_{k}}^{-1}(x_{0}),\\
& A_{k}:=D^{2}d(\cdot, x_{0})(x_{k})
\end{align*}
then we have that $|\zeta_{k}|=d(x_{k}, x_{0})$, and
\begin{align*} & (\psi_{k})_{t}(z_{k})=\varphi_{t}(z_{0})+2\omega'_{k}(t_{k}-t_{0}) \\
& D\psi_{k}(z_{k})=\frac{2}{|\zeta_{k}|} f'(|\zeta_{k}|)\, \zeta_{k}, \\
&D^{2}\psi_{k}(z_{k})=2f''(|\zeta_{k}|)\,
\frac{\zeta_{k}\otimes\zeta_{k}}{|\zeta_{k}|^{2}}+2f'(|\zeta_{k}|)A_{k}.
\end{align*}
Since $F$ is geometric we have
\begin{equation}\label{form of F} F(D\psi_{k}(z_{k}),
D^{2}\psi_{k}(z_{k}))=2f'(|\zeta_{k}|)F(\frac{\zeta_{k}}{|\zeta_{k}|},
A_{k}).\end{equation}

Next, because $B=B(z_{0}, \delta_{0})$ is compact, we may find
numbers $\Delta, \delta>0$ such that the sectional curvature $K$ of
$M$ satisfies $\delta\leq K\leq \Delta$ on $B$. We may of course
assume $\delta_{0}<\min\{i_{M}(x_{0}), \pi/2\sqrt{\Delta}\}$, so
that we can apply Lemma \ref{Sakais lemma}: we obtain that
$A_{k}(\zeta_{k},\zeta_{k})=0$, and for all $v\in TM_{x_{k}}$ such
that $v\bot\zeta_{k}$ we have
\begin{equation*}
\frac{c_{\Delta}(|\zeta_{k}|)}{s_{\Delta}(|\zeta_{k}|)}\langle v,
v\rangle \leq A_{k}(v,v)\leq
\frac{c_{\delta}(|\zeta_{k}|)}{s_{\delta}(|\zeta_{k}|)}\langle v,
v\rangle.
\end{equation*}
This implies \begin{equation}\label{estimation for Ak}
\frac{c_{\Delta}(|\zeta_{k}|)}{s_{\Delta}(|\zeta_{k}|)}
\left(I-\frac{\zeta_{k}\otimes\zeta_{k}}{|\zeta_{k}|^{2}}\right)\leq
A_{k}\leq \frac{c_{\delta}(|\zeta_{k}|)}{s_{\delta}(|\zeta_{k}|)}
\left(I-\frac{\zeta_{k}\otimes\zeta_{k}}{|\zeta_{k}|^{2}}\right).
\end{equation}
On the other hand, equation \eqref{limit of tc(t)/s(t)} tells us
that
$$\frac{c_{\Delta}(t)}{s_{\Delta}(t)} \geq\frac{1}{2t} \, \textrm{ and }
\frac{c_{\delta}(t)}{s_{\delta}(t)}\leq \frac{2}{t}
$$ if $t>0$ is
small enough. Hence we have
$$\frac{c_{\Delta}(|\zeta_{k}|)}{s_{\Delta}(|\zeta_{k}|)}
\geq\frac{1}{2|\zeta_{k}|} \, \textrm{ and }
\frac{c_{\delta}(|\zeta_{k}|)}{s_{\delta}(|\zeta_{k}|)} \leq
\frac{2}{|\zeta_{k}|}
$$
for $k$ large enough, which we may assume are all $k$. By plugging
these inequalities into \eqref{estimation for Ak} we obtain
\begin{equation}\label{estimation for Ak2}
\frac{1}{2|\zeta_{k}|}\left(I-\frac{\zeta_{k}\otimes\zeta_{k}}{|\zeta_{k}|^{2}}\right)\leq
A_{k}\leq
\frac{2}{|\zeta_{k}|}\left(I-\frac{\zeta_{k}\otimes\zeta_{k}}{|\zeta_{k}|^{2}}\right)
\end{equation}
Bearing in mind that $F$ is elliptic and geometric, we get
\begin{eqnarray*}\label{estimation for F(zetak Ak)}
& & \frac{1}{|\zeta_{k}|}F(\zeta_{k}, 2I)=
F(\frac{\zeta_{k}}{|\zeta_{k}|}, \frac{2}{|\zeta_{k}|} I
)=F\left(\frac{\zeta_{k}}{|\zeta_{k}|},
\frac{2}{|\zeta_{k}|}\left(I-\frac{\zeta_{k}\otimes\zeta_{k}}{|\zeta_{k}|^{2}}\right)
\right)\leq \\ & &F(\frac{\zeta_{k}}{|\zeta_{k}|}, A_k)\leq \\
& &F\left(\frac{\zeta_{k}}{|\zeta_{k}|},
\frac{1}{2|\zeta_{k}|}\left(I-\frac{\zeta_{k}\otimes\zeta_{k}}{|\zeta_{k}|^{2}}\right)
\right)=F(\frac{\zeta_{k}}{|\zeta_{k}|},\frac{1}{2|\zeta_{k}|}
\,I)\leq \frac{1}{|\zeta_{k}|}F(\zeta_{k}, -2I),
\end{eqnarray*}
which combined with \eqref{form of F} yields \begin{equation}
2\frac{f'(|\zeta_{k}|)}{|\zeta_{k}|}F(\zeta_{k}, 2I)\leq
F(D\psi_{k}(z_{k}), D^{2}\psi_{k}(z_{k}))\leq
2\frac{f'(|\zeta_{k}|)}{|\zeta_{k}|}F(\zeta_{k}, -2I),
\end{equation}
which, thanks to condition \eqref{limit property of f, F}, allows to
conclude that \begin{equation}\label{Fofpsi goes to
0}\lim_{k\to\infty}F(D\psi_{k}(z_{k}),
D^{2}\psi_{k}(z_{k}))=0.\end{equation} Finally, from \eqref{psik is
subsolution}, \eqref{convergence of psikt} and \eqref{Fofpsi goes to
0}, it follows that $$\varphi_{t}(z_{0})\leq 0.$$ In either case we
see that $W^{*}$ is a subsolution of \eqref{eqPerron}.
\end{proof}

\medskip

\begin{thm}\label{Existence}
Let $F$ satisfy conditions ({\bf A - D}) of Section 2, and assume
that $\mathcal{F}(F)\neq\emptyset$ and comparison holds for the
equation
\begin{equation}\label{equation for existence}
\left\{
  \begin{array}{ll}
    u_{t}+F(Du, D^2u)=0\\
    u(0,x)=g(x).
  \end{array}
\right.
\end{equation} Let $\underline{u}$ and $\overline{u}$ be a
subsolution and a supersolution of \eqref{equation for existence},
respectively, satisfying
$\underline{u}_*(0,x)=\overline{u}^*(0,x)=g(x)$. Then $w=\sup \{ v :
\underline{u} \leq v \leq \overline{u}, \,\,
   v  \text{ is a subsolution}\}$ is a solution of \eqref{equation for existence}.
\end{thm}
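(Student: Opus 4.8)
The plan is to run the classical Perron argument in the form used by Ishii, adapted to the $\mathcal{A}(F)$ framework. The function $w$ is well defined because $\underline{u}\le v\le \overline{u}$ for every admissible $v$, so $\underline{u}\le w\le\overline{u}$; in particular $w$ is finite and $w^{*}$ is finite everywhere. Applying Proposition \ref{closed under sup} to the family $\mathcal{S}=\{v:\underline{u}\le v\le\overline{u},\ v\text{ a subsolution}\}$, we get that $w^{*}$ is a subsolution of \eqref{eqPerron} on $(0,T)\times M$; moreover $w^{*}\le\overline{u}^{*}$ (since $\overline{u}$ is an upper bound for $\mathcal{S}$ and $\overline{u}^{*}$ is USC), so $w^{*}\in\mathcal{S}$, hence $w=w^{*}$ is itself a subsolution. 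It remains to show that $w_{*}$ is a supersolution; once this is done, comparison applied on $[0,T)\times M$ (using $w_{*}(0,\cdot)\ge \underline{u}_{*}(0,\cdot)=g=\overline{u}^{*}(0,\cdot)\ge w^{*}(0,\cdot)$, so that the initial inequality $w^{*}\le w_{*}$ holds at $t=0$) forces $w^{*}\le w_{*}$ on $[0,T)\times M$, whence $w$ is continuous and is the desired solution.

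So suppose, for contradiction, that $w_{*}$ fails to be a supersolution at some $z_{0}=(t_{0},x_{0})\in(0,T)\times M$: there is $\varphi\in\mathcal{A}(F)$ with $w_{*}-\varphi$ having a strict minimum at $z_{0}$, $w_{*}(z_{0})=\varphi(z_{0})$, yet the supersolution inequality is violated there. The standard device is to bump $\varphi$ up by a small constant: set $\varphi_{\delta}(z)=\varphi(z)+\delta$ for small $\delta>0$ and show that on a small punctured ball around $z_{0}$ one still has $w_{*}>\varphi_{\delta}$ (by the strict-minimum property, after shrinking the ball) while $w(z_{0})\le w_{*}(z_{0})<\varphi_{\delta}(z_{0})$ is impossible — more precisely, one uses the violated inequality together with continuity of $F$ off $\{D\varphi=0\}$ (in Case 1, $D\varphi(z_{0})\ne0$) or the defining structure of $\mathcal{A}(F)$ (in Case 2, $D\varphi(z_{0})=0$) to check that $\varphi_{\delta}$ is a strict \emph{subsolution} in a neighborhood of $z_{0}$. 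Then the modified function $U:=\max\{w,\varphi_{\delta}\}$ on that neighborhood and $U:=w$ outside is again a subsolution (the maximum of two subsolutions is a subsolution, using that $\varphi_\delta$ is continuous and agrees with the larger value near $z_0$), satisfies $\underline{u}\le U\le\overline{u}$ (here we need $\varphi_{\delta}\le\overline{u}$ near $z_{0}$, which follows from $\varphi(z_0)=w_*(z_0)\le \overline u_*(z_0)$ and choosing $\delta$ small — this is the one place requiring a little care, and may need shrinking the neighborhood and invoking $w_*\le\overline u$), and $U>w$ somewhere near $z_{0}$ because $w_{*}(z_{0})=\varphi(z_{0})<\varphi_{\delta}(z_{0})$ means $w$ takes values below $\varphi_{\delta}$ arbitrarily close to $z_{0}$, so lifting to the max strictly increases $w$ at such points. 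This contradicts the maximality of $w$ in $\mathcal{S}$.

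The main obstacle is the bump construction in the degenerate case $D\varphi(z_{0})=0$: there the subsolution inequality for $\varphi_{\delta}$ at nearby points reduces to checking $(\varphi_\delta)_t\le 0$ at points where $D\varphi$ vanishes and the genuine PDE inequality where it does not, and one must verify that adding a constant preserves membership in $\mathcal{A}(F)$ and does not destroy the $\mathcal{F}(F)$-type control that makes $F(D\varphi_\delta,D^2\varphi_\delta)$ tend to $0$ along sequences approaching a critical point — this is exactly the mechanism already exploited in Case 2 of the proof of Proposition \ref{closed under sup}, so the same estimates via Lemma \ref{Sakais lemma} and \eqref{limit property of f, F} apply. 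The secondary nuisance is the behavior at $t=0$: one shows as usual that $w_{*}(0,x)\ge g(x)\ge w^{*}(0,x)$, so $w^{*}(0,\cdot)=w_{*}(0,\cdot)=g$, which legitimizes applying the comparison theorem on the closed time strip. Everything else is routine.
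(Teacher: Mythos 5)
Your overall scheme --- Proposition \ref{closed under sup} to make $w^*$ a subsolution, a bump-and-patch contradiction argument to show $w_*$ is a supersolution, and then comparison to conclude $w^*\le w_*$ --- is exactly the paper's scheme, and your Case 1 ($D\varphi(z_0)\neq 0$) is essentially the paper's argument. The genuine gap is in Case 2. When $D\varphi(z_0)=0$ and $\varphi_t(z_0)<0$, you propose to check that $\varphi_\delta=\varphi+\delta$ is a strict subsolution on a punctured neighborhood of $z_0$, relying on ``the $\mathcal{F}(F)$-type control that makes $F(D\varphi_\delta,D^2\varphi_\delta)$ tend to $0$'' near the critical point. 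But membership in $\mathcal{A}(F)$ only gives a two-sided bound on the \emph{values} of $\varphi$ near $z_0$ (by $f(d(x,x_0))+\omega(|t-t_0|)$); it gives no control on $D\varphi$ and $D^2\varphi$ at nearby points where $D\varphi\neq 0$, and since $F$ is singular at $\zeta=0$ (e.g.\ in the Gaussian case), $F(D\varphi(z),D^2\varphi(z))$ may well blow up along sequences $z\to z_0$. The estimates via Lemma \ref{Sakais lemma} and \eqref{limit property of f, F} that you invoke apply only to the explicit radial functions built from $f$ and $\omega$, not to a general admissible test function. This is why the paper does not bump $\varphi$ itself: it discards $\varphi$ and works with $\psi(t,x)=\varphi(z_0)+\varphi_t(z_0)(t-t_0)-2f(d(x,x_0))-2\omega(t-t_0)$, which by the defining inequality of $\mathcal{A}(F)$ lies below $\varphi$ (hence below $w_*$) near $z_0$ and touches it at $z_0$, and whose derivatives are explicit, so that $F(D\psi,D^2\psi)\to 0$ while $\psi_t\to\varphi_t(z_0)<0$; the Case 1 patching is then run with $\psi$ in place of $\varphi$. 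Without this replacement your Case 2 does not go through.

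Two smaller points. First, your deduction ``$w^*\le\overline{u}^*$, so $w^*\in\mathcal{S}$'' is not valid as stated: $\mathcal{S}$ requires $v\le\overline{u}$, and $\overline{u}$ is only lower semicontinuous, so $\overline{u}^*$ may exceed $\overline{u}$; the paper instead gets $w^*\le\overline{u}$ from the comparison principle, using $w^*(0,\cdot)=g$. Second, your pointwise justification that $\varphi_\delta\le\overline{u}$ near $z_0$ (to keep the patched function in $\mathcal{S}$) breaks down when $w_*(z_0)=\overline{u}(z_0)$, and shrinking the neighborhood does not repair it; the paper avoids this entirely by noting that the patched subsolution $W$ agrees with $g$ at $t=0$ (the neighborhood is taken away from $\{t=0\}$), so comparison gives $W\le\overline{u}$, hence $W\in\mathcal{S}$ and $W\le w$, contradicting $\sup(W-w)>0$.
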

\begin{proof}
From $=\underline{u}_*\leq w_*\leq w \leq w^*\leq \overline{u}^*$,
we deduce that $w_*(0,x)=w(0,x)=w^*(0,x)=g(x)$. On the other hand
$w^*$ is a subsolution by Proposition \ref{closed under sup}, and
$w^*\leq \overline{u}$ by comparison, hence $w^*=w$ by definition of
$w$. We claim that $w_*$ is a supersolution. This implies $w^*\leq
w_*$ by comparison, hence $w_*=w=w^*$ and consequently $w$ is a
solution.

Let us prove the claim. Suppose to the contrary that $w_{*}$ is
not a supersolution. Then there exist $z_{0}=(t_{0}, x_{0})$ and a
$C^2$ function $\varphi$ such that $(w_{*}-\varphi)(z)\geq
0=(w_{*}-\varphi)(z_{0})$ for all $z$, and either
\begin{equation}\label{first case of existence}
\varphi_{t}(z_{0})+F(D\varphi(z_{0}), D^{2}\varphi(z_{0}))< 0, \,
\textrm{ when } D\varphi(z_{0})\neq 0, \textrm{ or}
\end{equation}
\begin{equation}\label{second case of existence}
\varphi_{t}(z_{0})< 0, \, \textrm{ when } D\varphi(z_{0})=0.
\end{equation}

By replacing $\varphi(t,x)$ with the function
$\varphi(t,x)+d(x,x_{0})^4+(t-t_{0})^4$ on a neighborhood of $z_0$
we can furthermore assume that
    \begin{equation}\label{very strict minimum condition}
    (w_{*}-\varphi)(t,x)\geq d(x,x_{0})^4+(t-t_{0})^4.
    \end{equation}
Let us denote
    $$
    U_{\delta}:=\{(t,x): d(x,x_{0})^4+(t-t_{0})^4\leq \delta^4\}.
    $$

\noindent{\bf Case 1.} In the case when \eqref{first case of
existence} holds, by continuity of $\varphi_{t}$, $D\varphi$, $D^2
\varphi$ and $F$, we can find $r>0$ such that
$$\varphi_{t}(z)+F(D\varphi(z), D^{2}\varphi(z))< 0$$ for all $z\in
U_{2r}$, that is $\varphi$ is a subsolution on $U_{2r}$, and
obviously the same is true of
$\widetilde{\varphi}:=\varphi+r^{4}/2$.

From \eqref{very strict minimum condition} we have that
    \begin{equation}\label{bound on crown}
    w(z)\geq
    w_{*}(z)-\frac{r^{4}}{2}\geq\varphi(z)+\frac{r^{4}}{2}\,
    \textrm{ for all } z\in U_{2r}\setminus U_{r}.
    \end{equation}

Now let us define $$W(z)=\left\{
                        \begin{array}{ll}
                          \max \{ \widetilde{\varphi}(z), w(z) \}, & \hbox{ if $z\in U_r$;} \\
                          w(z), & \hbox{ otherwise.}
                        \end{array}
                      \right.
$$
By using Proposition \ref{closed under sup} and equation
\eqref{bound on crown}, it is immediately checked that $W$ is a
subsolution. We have $W=w$ outside $B(z_{0}, r)$, but
\begin{equation}\label{first contradiction for existence}
\sup (W-w)>0 \end{equation} because, by definition of $w_*$, there
exits a sequence $\{ (t_n,x_n)\}$ converging to $(t_0,x_0)$ such that
$\lim w(t_n,x_n)=w_*(t_0,x_0)$, and consequently we have
$$\lim (W(t_n,x_n)-w(t_n,x_n))\geq \lim (\widetilde{\varphi}(t_n,x_n)-w(t_n,x_n))=
r^{4}/2 >0.$$ On the other hand, $W(0,x)=w(0,x)=g(x)$, because we
could of course have taken $r>0$ small enough such that
$(0,x)\not\in B(z_{0}, r)$. We deduce (from comparison again) that
$W\leq \overline{u}$ and consequently $W\leq w$, which contradicts
\eqref{first contradiction for existence}.

\medskip

\noindent{\bf Case 2.} On the other hand, in the case when
\eqref{second case of existence} holds, since
$\varphi\in\mathcal{A}(F)$, there exist $\delta_{0}>0$, $\omega\in
C(\mathbb{R})$ with $\omega(r)=o(r)$, and $f\in\mathcal{F}(F)$ such
that
\begin{equation*}|\varphi(x,t)-\varphi(z_{0},
t_{0})-\varphi_{t}(t_{0},x_{0})(t-t_{0})|\leq
f(d(x,x_{0}))+\omega(t-t_{0})\end{equation*} for all $z=(t,x)\in
B:=B(z_{0}, \delta_{0})$. We may assume that $ \omega\in
C^1(\mathbb{R})$, $\omega(0)=0=\omega'(0)$, and $\omega(r)>0$ for
$r>0$. Let us define
$$
\psi(t,x)=\varphi(z_{0})+ \varphi_{t}(z_{0})(t-t_{0})-2f(d(x,
x_{0}))-2\omega(t-t_{0}).$$ Then $w_{*}-\psi$ attains a strict
minimum at $z_{0}$. Also notice that $D\psi(z)\neq 0$ for $z\neq
z_{0}$. Arguing as in Case 2 of the proof of Proposition
\ref{closed under sup}, one can show that
\begin{equation}\label{lim property of F the second}
\lim_{z\to z_{0}} F(D\psi(z), D^{2}\psi(z))=0.
\end{equation}
By combining this with the continuity of $\psi_{t}$ and the fact
that $\psi_{t}(z_{0})=\varphi_{t}(z_{0})<0$, we can find an $r>0$
such that
$$\psi_{t}(z)+F(D\psi(z), D^{2}\psi(z))< 0$$ for all $z\in
U_{2r}, z\neq z_{0}$. The rest of the proof is identical to that
of Case 1 (just replace $\varphi$ with $\psi$).
\end{proof}

\bigskip

Let us now show how to apply the above Theorem in order to
construct solutions of \eqref{equation for existence}. We will
need to use the following stability result.
\begin{lem}\label{sequences of solutions} Assume that u.s.c. (respectively l.s.c.) functions $u_k$
are subsolutions (supersolutions, respectively) of
\eqref{curvatureevolutioneq}. Assume also that $\{ u_k\}$
converges locally uniformly to a function $u$. Then $u$ is
subsolution (supersolution, respectively) of
\eqref{curvatureevolutioneq}.
\end{lem}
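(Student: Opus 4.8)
The plan is to run the standard stability argument for viscosity solutions (as in \cite[Ch.~6]{CIL}, \cite[Ch.~2]{Giga}), the only extra care being needed at critical points of the test function, where the class $\mathcal{A}(F)$ enters; I treat the subsolution case, the supersolution one being symmetric. Note first that $u$, as a locally uniform limit of upper semicontinuous functions, is itself upper semicontinuous, hence a legitimate candidate subsolution.

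Let $\varphi\in\mathcal{A}(F)$ and let $z_0=(t_0,x_0)$ be a maximum of $u-\varphi$; as remarked after the definition of solution I may assume the maximum is strict and attained in a small compact ball $\overline{B}\subset(0,T)\times B(x_0,\rho)$ about $z_0$, with $\rho$ so small that on $B(x_0,\rho)$ the sectional curvature satisfies $\delta\le K\le\Delta$ and $\rho<\min\{i_{M}(x_0),\pi/2\sqrt{\Delta}\}$. The backbone of the proof is the usual perturbation step: since each $u_k$ is upper semicontinuous and $u_k\to u$ uniformly on $\overline{B}$, the maximum of $u_k-\varphi$ on $\overline{B}$ is attained at some $z_k$, and strictness of the maximum of $u-\varphi$ at $z_0$ forces $z_k\to z_0$; for large $k$ the point $z_k$ is interior to $\overline{B}$, hence a local maximum of $u_k-\varphi$, and there the subsolution inequalities for $u_k$ apply. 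If $D_x\varphi(z_0)\ne0$ this already finishes the job: for large $k$ one has $D_x\varphi(z_k)\ne0$ and $\varphi_t(z_k)+F(D_x\varphi(z_k),D^2_x\varphi(z_k))\le0$, and letting $k\to\infty$, using continuity of $\varphi_t,D_x\varphi,D^2_x\varphi$ and of $F$ on $J_0^2(M)$, gives $\varphi_t(z_0)+F(D_x\varphi(z_0),D^2_x\varphi(z_0))\le0$.

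The delicate case is $D_x\varphi(z_0)=0$, where I must show $\varphi_t(z_0)\le0$. Here the plan is first to replace $\varphi$ by a test function of the explicit shape used in Case~2 of the proof of Proposition~\ref{closed under sup}. Admissibility furnishes $\delta_0>0$, $f\in\mathcal{F}(F)$ and $w\in C([0,\infty))$ with $w(r)/r\to0$ and $|\varphi(z)-\varphi(z_0)-\varphi_t(z_0)(t-t_0)|\le f(d(x,x_0))+w(|t-t_0|)$ near $z_0$. Fixing a $C^2$ majorant $\widetilde w\ge|w|$ with $\widetilde w(0)=\widetilde w'(0)=0$ (hence $\widetilde w(r)/r\to0$) and a smooth cutoff $\chi\equiv1$ near $x_0$ supported in $B(x_0,\rho)$, the function
$$\widetilde\varphi(t,x):=\varphi(z_0)+\varphi_t(z_0)(t-t_0)+\chi(x)f(d(x,x_0))+\widetilde w(|t-t_0|)$$
is of class $C^2$ (here it is crucial that $f(0)=f'(0)=f''(0)=0$, which makes $f(d(\cdot,x_0))$ twice continuously differentiable), lies in $\mathcal{A}(F)$, satisfies $\widetilde\varphi\ge\varphi$ near $z_0$ with equality at $z_0$—so $u-\widetilde\varphi$ still has a strict maximum at $z_0$—and has $\widetilde\varphi_t(z_0)=\varphi_t(z_0)$. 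Running the perturbation step with $\widetilde\varphi$ in place of $\varphi$ yields $z_k\to z_0$ with $u_k-\widetilde\varphi$ having a local maximum at $z_k$. For each large $k$, either $D_x\widetilde\varphi(z_k)=0$ and then $\widetilde\varphi_t(z_k)\le0$, or $D_x\widetilde\varphi(z_k)\ne0$ and then $\widetilde\varphi_t(z_k)\le-F(D_x\widetilde\varphi(z_k),D^2_x\widetilde\varphi(z_k))$; but since near $z_k$ the function $\widetilde\varphi$ has exactly the form $c_0+c_1(t-t_0)+f(d(x,x_0))+\widetilde w(|t-t_0|)$, the computation of Case~2 of Proposition~\ref{closed under sup}—sandwiching $D^2d(\cdot,x_0)(x_k)$ by Lemma~\ref{Sakais lemma}, then invoking ellipticity, geometricity and \eqref{limit property of f, F}—shows $F(D_x\widetilde\varphi(z_k),D^2_x\widetilde\varphi(z_k))\to0$. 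Hence $\limsup_k\widetilde\varphi_t(z_k)\le0$, so $\varphi_t(z_0)=\widetilde\varphi_t(z_0)=\lim_k\widetilde\varphi_t(z_k)\le0$, as required. This proves that $u$ is a subsolution.

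The main obstacle is exactly this last case: a pointwise bound on $\varphi$ near a critical point $z_0$ does not, by itself, control $F(D_x\varphi(z_k),D^2_x\varphi(z_k))$ along an arbitrary sequence $z_k\to z_0$, and the device that circumvents it is the replacement of $\varphi$ by the explicit admissible majorant $\widetilde\varphi$, whose derivatives at nearby points are governed by the distance-function estimates already established in Proposition~\ref{closed under sup}. Everything else is routine, and the supersolution statement follows by the symmetric argument with minima and the reversed inequalities.
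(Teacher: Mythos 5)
Your proof is correct and follows essentially the same route as the paper: the standard perturbation-of-maxima argument for locally uniform limits, with the critical case $D\varphi(z_0)=0$ handled by passing to an explicit admissible test function built from the admissibility data and invoking the distance-function computation of Case~2 of Proposition~\ref{closed under sup}, exactly as the paper's one-line proof indicates. The only (cosmetic) difference is that you fix a single $C^2$ majorant $\widetilde w$ of the time modulus, whereas the paper approximates $\omega$ by a sequence of $C^2$ functions $\omega_k$; both devices serve the same purpose.
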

\begin{proof}
Suppose that $\varphi\in \mathcal{A}(F)$ and $u-\varphi  \textrm{
attains a strict local maximum at } (t_0,x_0)$. The convergence of
the subsolutions $u_{k}$ allows us to find a sequence of local
maxima $(t_k,x_k)$ of $u_k-\varphi$ which converges to
$(t_0,x_0)$. Then, by a similar argument to that of the proof of
Proposition \ref{closed under sup}, one can show that $u$ is a
subsolution of \eqref{curvatureevolutioneq} at $(t_{0}, x_{0})$.
\end{proof}

\bigskip

Let $M$ be a compact Riemannian manifold. Assume that comparison
holds for the equation \eqref{equation for existence}. Let us
first produce solutions of \eqref{equation for existence} for
initial data $g$ in the class $\mathcal{A}(F)$.

Let us define $$\underline{u}(t,x)=-Kt +g(x), \, \textrm{ and } \,
\overline{u}(t,x)=Kt +g(x),$$ where $K:=\sup_{x\in M}|F(Dg(x),
D^{2}g(x))|$ (which is finite because $g\in \mathcal{A}(F)$ and
$M$ is compact). It is immediately seen that $\underline{u}$ is a
subsolution and $\overline{u}$ is a supersolution of
\eqref{equation for existence}, and obviously
$\underline{u}_*(0,x)=\overline{u}^*(0,x)=g(x)$. According to
Theorem 5.3 and comparison, there exists a unique solution $u$ of
\eqref{equation for existence}.

Now take $g$ a continuous function on $M$. According to
Proposition \ref{AF is dense}, we can find a sequence $g_{k}$ of
functions in $\mathcal{A}(F)$ such that $g_{k}\to g$ uniformly on
$M$. Let $u_{k}$ be the unique solution of \eqref{equation for
existence} with initial datum $g_{k}$. By Remark \ref{continuous
dependence of solutions with respect to initial data},  $(u_{k})$
is a Cauchy sequence in $\mathcal{C}([0,\infty)\times M)$, hence
it converges to some $u\in \mathcal{C}([0,\infty)\times M)$
uniformly on $[0,\infty)\times M$. Then by Lemma 6.1 it follows
that $u$ is a solution with initial datum $u(0,x)=g(x)$.

Therefore we can combine this argument with Theorems \ref{compact}
and \ref{general comparison result} to obtain the following
corollaries.

\begin{cor}
Let $M$ be a compact Riemannian manifold of nonnegative sectional
curvature, $g:M\to\mathbb{R}$ a continuous function, and let
$F:J_{0}^{2}(M) \rightarrow \mathbb R$ be continuous, elliptic,
translation invariant and geometric. Then there exists a unique
solution of \eqref{curvatureevolutioneq}  on $[0, \infty)\times
M$.
\end{cor}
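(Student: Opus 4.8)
The plan is to assemble the results already proved rather than to argue from scratch. Since $M$ is compact we have $i(M)>0$, and by the proposition following Definition~\ref{def of the F class} the cone $\mathcal{F}(F)$ is nonempty; since $M$ additionally has nonnegative sectional curvature, Theorem~\ref{compact} provides the comparison principle for \eqref{curvatureevolutioneq}. Comparison already yields \emph{uniqueness}: if $u_1,u_2$ are solutions with $u_1(0,\cdot)=u_2(0,\cdot)=g$, applying comparison to the pair $(u_1,u_2)$ and then to $(u_2,u_1)$ forces $u_1=u_2$ on $[0,\infty)\times M$. So the whole matter reduces to producing \emph{existence} of a solution with a prescribed continuous initial datum $g$.

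First I would treat the case $g\in\mathcal{A}(F)$. Set $K:=\sup_{x\in M}|F(D_xg,D^2_xg)|$, the supremum being over the open set where $Dg\neq 0$. This is finite: away from the zero set of $Dg$ it is controlled by continuity of $F$ on $J^2_0(M)$ and compactness of $M$, while near a zero of $Dg$ the admissibility condition in Definition~\ref{def of the A(F) class}, together with the limit property \eqref{limit property of f, F} of the associated $f\in\mathcal{F}(F)$, keeps $F(D_xg,D^2_xg)$ bounded (this is the one place where the special structure of $\mathcal{A}(F)$ is used). Then $\underline{u}(t,x):=g(x)-Kt$ and $\overline{u}(t,x):=g(x)+Kt$ are respectively a subsolution and a supersolution of \eqref{equation for existence}, with $\underline{u}_*(0,\cdot)=\overline{u}^*(0,\cdot)=g$; at a point where $Dg=0$ one has $D^2g=0$ (because $g\in\mathcal{A}(F)$) and the required inequality degenerates to $\mp K\le 0$. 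Since $\mathcal{F}(F)\neq\emptyset$ and comparison holds, Theorem~\ref{Existence} then furnishes a solution of \eqref{equation for existence}, unique by comparison.

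Next, for an arbitrary continuous $g$, I would invoke Proposition~\ref{AF is dense} to pick $g_k\in\mathcal{A}(F)$ with $g_k\to g$ uniformly on $M$, and let $u_k$ be the solution with initial datum $g_k$ just constructed. By Remark~\ref{continuous dependence of solutions with respect to initial data} one has $\|u_k-u_\ell\|_{L^\infty([0,\infty)\times M)}\le\|g_k-g_\ell\|_{L^\infty(M)}$, so $(u_k)$ is Cauchy in $\mathcal{C}([0,\infty)\times M)$ and converges uniformly to some continuous $u$ with $u(0,\cdot)=g$. The stability Lemma~\ref{sequences of solutions} shows that $u$ is simultaneously a subsolution and a supersolution of \eqref{curvatureevolutioneq}, hence a solution, and comparison again gives uniqueness. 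This completes the argument.

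The only delicate point is the finiteness of $K$ in the step for $g\in\mathcal{A}(F)$, i.e.\ taming the singularity of $F$ along $\{Dg=0\}$; but that is precisely the behaviour the class $\mathcal{A}(F)$ was designed to control, so it is not a real obstacle. In effect this corollary is a packaging statement: all the substantive analysis resides in Theorems~\ref{compact} and~\ref{Existence}, and here one merely combines them with the density and stability lemmas.
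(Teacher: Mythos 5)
Your proposal is correct and follows essentially the same route as the paper: comparison (Theorem \ref{compact}) gives uniqueness, existence for $g\in\mathcal{A}(F)$ comes from the barriers $g\pm Kt$ and Perron's method (Theorem \ref{Existence}), and the general case follows by density of $\mathcal{A}(F)$, continuous dependence on initial data, and the stability lemma. The paper likewise asserts the finiteness of $K$ without detailed justification, so your slightly expanded remark on that point is a (welcome) elaboration rather than a deviation.
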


\begin{cor}
Let $M$ be a compact Riemannian manifold, $g:M\to\mathbb{R}$ a
continuous function, and let $F$ satisfy conditions ({\bf A - D})
of Section 2. Assume furthermore that \eqref{condition on f} and
\eqref{condition on uniform continuity of F} are satisfied. Then
there exists a unique solution of \eqref{curvatureevolutioneq} on
$[0,\infty)\times M$.
\end{cor}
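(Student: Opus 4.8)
The plan is to combine the existence construction carried out in the paragraphs immediately above --- which, \emph{assuming comparison holds}, produces a unique solution of \eqref{equation for existence} for every continuous initial datum $g$ on a compact manifold --- with the comparison Theorem \ref{general comparison result}, in exactly the way the previous Corollary combines that construction with Theorem \ref{compact}. So the only substantive thing to check here is that, under the present hypotheses, all the assumptions of Theorem \ref{general comparison result} are satisfied.

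Those assumptions are: $M$ complete, with sectional curvature bounded below and positive injectivity radius; conditions ({\bf A--D}) on $F$; the existence of an $f\in\mathcal{F}(F)$ with $tf'(t)\le Cf(t)$; and the uniform-continuity condition \eqref{condition on uniform continuity of F}. The last three hold by hypothesis. For the first, a compact Riemannian manifold is complete by Hopf--Rinow, its sectional curvature is continuous on a compact space and hence bounded below, and $i(M)=\inf_{x\in M}i_M(x)>0$ since $x\mapsto i_M(x)$ is continuous and everywhere positive on the compact $M$. (We also note $\mathcal{F}(F)\neq\emptyset$, needed to run Theorem \ref{Existence}; this follows both from condition \eqref{condition on f} and from the Proposition following Definition \ref{def of the F class}.) Thus Theorem \ref{general comparison result} applies --- the growth hypothesis $\limsup_{(t,x)\to\infty}(u-v)\le 0$ being vacuous because $M$ is compact --- and comparison holds for \eqref{equation for existence}.

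With comparison in hand, I would simply replay the construction above. For $g\in\mathcal{A}(F)$, set $K:=\sup_{x\in M}|F(Dg(x),D^2g(x))|$, finite because $g\in\mathcal{A}(F)$ and $M$ is compact; then $\underline u(t,x)=-Kt+g(x)$ and $\overline u(t,x)=Kt+g(x)$ are respectively a subsolution and a supersolution of \eqref{equation for existence} with $\underline u_*(0,\cdot)=\overline u^*(0,\cdot)=g$, so Theorem \ref{Existence} together with comparison yields a unique solution. For an arbitrary continuous $g$, Proposition \ref{AF is dense} gives $g_k\in\mathcal{A}(F)$ with $g_k\to g$ uniformly; letting $u_k$ be the solution with datum $g_k$, Remark \ref{continuous dependence of solutions with respect to initial data} shows $(u_k)$ is Cauchy in $\mathcal{C}([0,\infty)\times M)$, and its uniform limit $u$ is a solution with $u(0,\cdot)=g$ by Lemma \ref{sequences of solutions}; uniqueness is comparison applied in both directions, and since $T$ was arbitrary this holds on all of $[0,\infty)\times M$. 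The argument is essentially bookkeeping, with all the analytic weight resting on Theorems \ref{general comparison result} and \ref{Existence}; the only point needing the slightest care is the passage from compactness to \emph{positive global} injectivity radius, which uses continuity of $x\mapsto i_M(x)$.
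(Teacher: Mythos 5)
Your proposal is correct and follows exactly the route the paper intends: it combines the Perron/density/stability construction given just before the corollaries with the comparison Theorem \ref{general comparison result}, whose hypotheses (completeness, curvature bounded below, positive injectivity radius, the vacuous condition at infinity, and $\mathcal{F}(F)\neq\emptyset$ via \eqref{condition on f}) you verify for compact $M$ just as the paper's remarks require. Nothing is missing.
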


\begin{cor}[Ilmanen]
Let $M$ be a compact Riemannian manifold, $g:M\to\mathbb{R}$
continuous. There exists a unique solution $u$ of the mean
curvature evolution equation \eqref{meancurvature} such that
$u(0,x)=g(x)$.
\end{cor}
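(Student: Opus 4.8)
The plan is to assemble this statement from the general machinery already developed, checking only that the function $F$ of \eqref{meancurvature2} meets every hypothesis needed. First I would record that this $F$ satisfies conditions (\textbf{A}--\textbf{D}): continuity off $\{\zeta=0\}$, ellipticity, translation invariance and geometricity were all verified in Example \ref{example mean curvature}. Next I would observe that $\mathcal{F}(F)\neq\emptyset$: since $F(\zeta,\pm 2I)$ stays bounded as $\zeta\to 0$, the choice $f(t)=t^{4}$ gives $\tfrac{f'(|\zeta|)}{|\zeta|}F(\zeta,\pm 2I)=4|\zeta|^{2}F(\zeta,\pm 2I)\to 0$, so $f\in\mathcal{F}(F)$ (alternatively one may invoke the proposition guaranteeing $\mathcal{F}(F)\neq\emptyset$ for compact $M$).

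The second ingredient is comparison. Being compact, $M$ is complete, has sectional curvature bounded below, and has positive injectivity radius, so the structural hypotheses on $M$ in Theorem \ref{general comparison result} hold. Condition \eqref{condition on f} is satisfied by $f(t)=t^{4}$, since $tf'(t)=4f(t)$; and condition \eqref{condition on uniform continuity of F} holds because $A\mapsto F(\zeta,A)$ is linear, so $F(\zeta,P-\delta I)-F(\zeta,P)=-\delta F(\zeta,I)=\delta\,\trace\!\left(I-\tfrac{\zeta\otimes\zeta}{|\zeta|^{2}}\right)\le \delta(n-1)\to 0$ uniformly in $(\zeta,P)$. Hence comparison holds for \eqref{meancurvature}; this is precisely Corollary \ref{Comparison for mean curvature}. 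In particular uniqueness is automatic, and continuous dependence on the initial datum follows from Remark \ref{continuous dependence of solutions with respect to initial data}.

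For existence I would run Perron's method as in the construction preceding these corollaries. When $g\in\mathcal{A}(F)$, put $K:=\sup_{x\in M}|F(Dg(x),D^{2}g(x))|<\infty$ and use the barriers $\underline{u}(t,x)=-Kt+g(x)$ and $\overline{u}(t,x)=Kt+g(x)$, which are respectively a sub- and a supersolution of \eqref{curvatureevolutioneq} with $\underline{u}_{*}(0,\cdot)=\overline{u}^{*}(0,\cdot)=g$; Theorem \ref{Existence}, applicable since $\mathcal{F}(F)\neq\emptyset$ and comparison holds, then yields a solution $u$ with $u(0,\cdot)=g$. For a general continuous $g$, use Proposition \ref{AF is dense} to choose $g_{k}\in\mathcal{A}(F)$ with $g_{k}\to g$ uniformly; the corresponding solutions $u_{k}$ form a Cauchy sequence in $\mathcal{C}([0,\infty)\times M)$ by the continuous-dependence estimate, hence converge uniformly to some $u$, and Lemma \ref{sequences of solutions} shows this $u$ is a solution with $u(0,\cdot)=g$. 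Uniqueness is immediate from comparison. There is no real obstacle here; the one point requiring attention is that $M$ may carry negative curvature, so one must appeal to Theorem \ref{general comparison result} / Corollary \ref{Comparison for mean curvature} rather than to Theorem \ref{compact}, which is legitimate precisely because $F$ is linear in $D^{2}u$ and $f(t)=t^{4}$ is available.
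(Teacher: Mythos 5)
Your proposal is correct and follows essentially the same route as the paper: verify (\textbf{A}--\textbf{D}) and $\mathcal{F}(F)\neq\emptyset$ for the mean curvature $F$, obtain comparison on a general compact $M$ from Theorem \ref{general comparison result} (via $f(t)=t^4$ and the linearity of $A\mapsto F(\zeta,A)$, i.e.\ Corollary \ref{Comparison for mean curvature}), and then run the Perron construction with the barriers $\pm Kt+g$ followed by the density/stability argument for general continuous $g$. Nothing is missing.
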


\begin{cor}
Let $M$ be a compact Riemannian manifold, $g:M\to\mathbb{R}$
continuous. There exists a unique solution $u$ of the positive
Gaussian curvature evolution equation
\eqref{positiveGaussiancurvature} such that $u(0,x)=g(x)$.
\end{cor}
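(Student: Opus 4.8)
The plan is to read this corollary as the specialization, to the function $F$ of \eqref{F for Gaussian curvature}, of the abstract existence-and-uniqueness framework set up in Sections~4 and~5; so all that is needed is to check that this particular $F$ satisfies the hypotheses of the relevant results. First I would verify conditions ({\bf A}--{\bf D}) of Section~2: $F$ has the form \eqref{FandG}, hence is geometric ({\bf D}) automatically, and it is continuous off $\{\zeta=0\}$, degenerate elliptic, and translation invariant --- these are exactly the properties recorded in the discussion of the Gaussian curvature evolution in Section~2 (ellipticity because $\det_+$ is monotone under positive-semidefinite perturbations and $\zeta$ is a null eigenvector of $S_\zeta A S_\zeta$; translation invariance as in Example~\ref{example mean curvature}, since $L_{xy}$ is a linear isometry intertwining $\zeta\otimes\zeta/|\zeta|^2$ with $L_{xy}\zeta\otimes L_{xy}\zeta/|L_{xy}\zeta|^2$). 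Moreover $\mathcal{F}(F)\neq\emptyset$: the function $f(t)=t^{2n}$ with $n=\dim M$ lies in $\mathcal{F}(F)$, as exhibited in Section~2 (the prefactor $f'(|\zeta|)/|\zeta|=2n|\zeta|^{2n-2}$ absorbs the at-most-polynomial behaviour of $F(\zeta,\pm 2I)$ near $\zeta=0$, so that \eqref{limit property of f, F} holds).

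Second, comparison. Since $M$ is compact it has positive injectivity radius, so, once $M$ is assumed to have nonnegative sectional curvature, Corollary~\ref{comparison for Gaussian curvature} applies: every subsolution of \eqref{positiveGaussiancurvature} lies below every supersolution dominating it on $\{0\}\times M$. In particular there is at most one solution with a prescribed continuous initial datum. I note that the nonnegative-curvature hypothesis is genuinely needed here, because $F$ in \eqref{F for Gaussian curvature} fails the uniform-continuity condition \eqref{condition on uniform continuity of F}, so Theorem~\ref{general comparison result} is unavailable and one must invoke Theorem~\ref{compact} instead; thus the corollary should be understood with ``$M$ of nonnegative sectional curvature'' in force.

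Third, existence. I would follow verbatim the construction of the paragraphs just preceding the corollary. For $g\in\mathcal{A}(F)$ set $K:=\sup_{x\in M}|F(Dg(x),D^2g(x))|$; this is finite because membership of $g$ in $\mathcal{A}(F)$ forces $D^2g$ to vanish wherever $Dg$ does and tames the singularity of $F$ along $\{Dg=0\}$ (the same $\mathcal{F}(F)$-estimate that appears in Case~2 of Proposition~\ref{closed under sup}), while $M$ is compact. Then $\underline{u}(t,x)=-Kt+g(x)$ and $\overline{u}(t,x)=Kt+g(x)$ are a sub- and a supersolution of \eqref{positiveGaussiancurvature} with $\underline{u}_*(0,\cdot)=\overline{u}^*(0,\cdot)=g$: at a maximum of $\underline{u}-\varphi$ one has $D^2g\le D^2\varphi$, so by ellipticity $F(Dg,D^2\varphi)\le F(Dg,D^2g)\le K$ and $\varphi_t+F(D\varphi,D^2\varphi)\le -K+K=0$ (in the degenerate case $D\varphi=0$ one only needs $\varphi_t=-K\le0$), and the supersolution check for $\overline{u}$ is symmetric. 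By Theorem~\ref{Existence} together with comparison there is then a unique solution with initial datum $g$. For an arbitrary continuous $g$, use Proposition~\ref{AF is dense} to choose $g_k\in\mathcal{A}(F)$ with $g_k\to g$ uniformly; by Remark~\ref{continuous dependence of solutions with respect to initial data} the corresponding solutions $u_k$ form a Cauchy sequence in $\mathcal{C}([0,\infty)\times M)$, and by the stability Lemma~\ref{sequences of solutions} their uniform limit $u$ is a solution with $u(0,\cdot)=g$; uniqueness is again comparison.

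The main obstacle is not analytic --- the statement is essentially a repackaging of theorems already proved --- but lies in making sure the cited machinery really applies to this highly singular $F$: that $F$ is genuinely continuous, elliptic, translation invariant and geometric on $J_0^2(M)$, that $\mathcal{F}(F)\neq\emptyset$, and, most importantly, that comparison is at hand, which --- failing \eqref{condition on uniform continuity of F} --- confines the argument to manifolds of nonnegative sectional curvature. A secondary point requiring care is the finiteness of $K$, i.e.\ that belonging to $\mathcal{A}(F)$ really controls $F(Dg,D^2g)$ near the critical set $\{Dg=0\}$.
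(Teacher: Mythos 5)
Your proof is correct and follows essentially the same route as the paper: the paper's justification of this corollary is exactly the construction in the paragraphs preceding it (the barriers $\mp Kt+g$ for $g\in\mathcal{A}(F)$ with $K=\sup_M|F(Dg,D^2g)|$, Perron via Theorem~\ref{Existence}, then density of $\mathcal{A}(F)$ from Proposition~\ref{AF is dense}, continuous dependence from Remark~\ref{continuous dependence of solutions with respect to initial data}, and stability from Lemma~\ref{sequences of solutions}), combined with the comparison results of Section~4. Your caveat about curvature is well taken and worth recording: since the Gaussian $F$ of \eqref{F for Gaussian curvature} fails condition \eqref{condition on uniform continuity of F}, the only comparison principle the paper provides for \eqref{positiveGaussiancurvature} is Theorem~\ref{compact} (equivalently Corollary~\ref{comparison for Gaussian curvature}), which requires nonnegative sectional curvature, so the hypothesis you reinstate is genuinely needed for the argument and appears to have been inadvertently dropped from the corollary's statement.
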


\begin{cor}\label{generalized motion by mean curvature in arbitrary codimension}
Let $M$ be a compact Riemannian manifold, $g:M\to\mathbb{R}$
continuous. There exists a unique solution $u$ of the mean
curvature evolution equation in arbitrary codimension (given in
Example \ref{mean curvature in arbitrary codimension}) such that
$u(0,x)=g(x)$.
\end{cor}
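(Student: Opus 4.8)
The plan is to deduce this from the general existence and uniqueness machinery of Sections 4--6, exactly as the preceding corollaries were obtained. All that is really needed is to check that the function
\[
F(\zeta,A)=\sum_{i=1}^{n-k}\lambda_i(Q),\qquad Q:=S_\zeta A S_\zeta,\qquad S_\zeta:=I-\frac{\zeta\otimes\zeta}{|\zeta|^2},
\]
of Example \ref{mean curvature in arbitrary codimension} (with $n=\dim M$, $k$ the codimension, and $\lambda_1(Q)\le\cdots\le\lambda_{n-1}(Q)$ the eigenvalues of the restriction $Q|_{\zeta^{\perp}}$) satisfies conditions ({\bf A - D}) of Section 2 together with the two extra hypotheses \eqref{condition on f} and \eqref{condition on uniform continuity of F} of Theorem \ref{general comparison result}. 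Once this is in place, comparison for \eqref{curvatureevolutioneq} follows from Theorem \ref{general comparison result} (a compact $M$ has sectional curvature bounded below and positive injectivity radius), and then the existence and uniqueness of the solution with $u(0,\cdot)=g$ follow from Theorem \ref{Existence} together with the approximation argument used above for the other corollaries (Proposition \ref{AF is dense}, Remark \ref{continuous dependence of solutions with respect to initial data} and Lemma \ref{sequences of solutions}).

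First I would dispatch conditions ({\bf A - D}), most of which are already indicated in Example \ref{mean curvature in arbitrary codimension}. The function $F$ is continuous off $\{\zeta=0\}$ because, for $\zeta\neq 0$, the operator $Q$ leaves $\zeta^{\perp}$ invariant, $Q|_{\zeta^{\perp}}$ is symmetric and depends continuously on $(\zeta,A)$, and the sum of its $n-k$ smallest eigenvalues depends continuously on $Q|_{\zeta^{\perp}}$. It is (degenerate) elliptic by the min-max characterization of $\lambda_i(Q)$ recalled in that example. It is translation invariant by the computation of Example \ref{example mean curvature}, once one observes that $L_{xy}S_\zeta L_{yx}=S_{L_{xy}\zeta}$ (see \eqref{MCEInvariant}) and that conjugation by the isometry $L_{xy}$ preserves eigenvalues. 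And it is geometric because it has the form \eqref{FandG}, so property ({\bf D}) applies; equivalently, $S_{\lambda\zeta}=S_\zeta$ and $S_\zeta(\zeta\otimes\zeta)S_\zeta=0$ give $S_\zeta(\lambda A+\mu\,\zeta\otimes\zeta)S_\zeta=\lambda Q$, hence $F(\lambda\zeta,\lambda A+\mu\,\zeta\otimes\zeta)=\lambda F(\zeta,A)$ for $\lambda>0$, $\mu\in\mathbb{R}$. Next I would take $f(t)=t^{4}$ and check $\mathcal{F}(F)\neq\emptyset$ and \eqref{condition on f} at the same time: since $S_\zeta$ acts as the identity on $\zeta^{\perp}$ we get $F(\zeta,\pm 2I)=\pm 2(n-k)$, a constant in $\zeta$, so $\frac{f'(|\zeta|)}{|\zeta|}F(\zeta,\pm 2I)=\pm 8(n-k)\,|\zeta|^{2}\to 0$ as $|\zeta|\to 0$, which is \eqref{limit property of f, F}; and $tf'(t)=4f(t)$ gives \eqref{condition on f} with $C=4$.

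The step I expect to require the most care is the uniform continuity assumption \eqref{condition on uniform continuity of F} --- this is precisely the hypothesis that fails for the Gaussian curvature flow, and it is what distinguishes the present case. To verify it I would use that $S_\zeta(P-\delta I)S_\zeta=S_\zeta P S_\zeta-\delta S_\zeta$ and that $S_\zeta$ restricts to the identity on $\zeta^{\perp}$, so that the $n-k$ relevant eigenvalues of $S_\zeta(P-\delta I)S_\zeta$ are those of $S_\zeta P S_\zeta$ lowered by $\delta$; hence $F(\zeta,P-\delta I)=F(\zeta,P)-(n-k)\delta$ for all $\zeta$ and $P$, and therefore $F(\zeta,P-\delta I)-F(\zeta,P)=-(n-k)\delta\to 0$ uniformly in $(\zeta,P)$ as $\delta\to 0$. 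This is the only mildly delicate computation; the rest is a routine transcription of the computations in Examples \ref{example mean curvature} and \ref{mean curvature in arbitrary codimension}. With ({\bf A - D}), \eqref{condition on f} and \eqref{condition on uniform continuity of F} all established, the corollary follows as described above.
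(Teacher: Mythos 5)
Your proposal is correct and follows essentially the same route as the paper, which deduces this corollary from the general compact-case corollary (conditions ({\bf A--D}) plus \eqref{condition on f} and \eqref{condition on uniform continuity of F}, fed into Theorem \ref{general comparison result} and the Perron/approximation existence argument); the paper leaves the hypothesis checks implicit, and your verifications --- in particular $F(\zeta,\pm 2I)=\pm 2(n-k)$ giving $f(t)=t^4\in\mathcal{F}(F)$, and $F(\zeta,P-\delta I)-F(\zeta,P)=-(n-k)\delta$ giving the uniform continuity in $D^2u$ --- are exactly the right ones and are carried out correctly. The only blemish is inherited from the paper's Example \ref{mean curvature in arbitrary codimension} itself (with the convention $u_t+F=0$, the formula there should carry a minus sign for $F$ to be degenerate elliptic rather than anti-elliptic), and this does not affect any of your estimates.
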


When $M$ is not compact, analogous corollaries can be established
if one additionally demands that the initial datum $g$ be a
(positive) constant outside some bounded set of $M$, and that
$i(M)>0$. The proof is similar (replacing uniform convergence on
$M$ with uniform convergence on compact subsets of $M$).

\bigskip

\section{Geometric consistency \& Level set method}

\begin{thm}\label{invariance} Let $\theta:\mathbb{R}\rightarrow \mathbb{R}$
be a continuous function, and let $u$ be a bounded continuous
solution of \eqref{curvatureevolutioneq}. Then $v=\theta\circ u$
is also a solution. Moreover, if $\theta$ is nondecreasing and $u$
is a subsolution (resp. supersolution) then $v=\theta\circ u$ is a
subsolution (resp. supersolution) as well.
\end{thm}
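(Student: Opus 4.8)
The plan is to reduce by approximation to the case of a smooth $\theta$, and then to exploit the geometricity of $F$ together with a change of test function; the monotone statement is the core, and most of the work occurs at test points where the spatial gradient of the test function vanishes, where the argument mirrors Case~2 of the proof of Proposition~\ref{closed under sup}. First I would treat a nondecreasing $\theta$ with $u$ a subsolution (the supersolution case being symmetric). Mollifying $\theta$ on a compact interval containing the range of the bounded function $u$ and adding $\tfrac1k\,\mathrm{id}$ yields strictly increasing $C^\infty$ functions $\theta_k\to\theta$ uniformly; then $\theta_k\circ u\to\theta\circ u$ uniformly, so by Lemma~\ref{sequences of solutions} it suffices to take $\theta$ smooth and strictly increasing. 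Fix $\varphi\in\mathcal{A}(F)$ and a local maximum $z_0=(t_0,x_0)$ of $\theta\circ u-\varphi$, normalized so that $\varphi(z_0)=\theta(u(z_0))$.

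If $D\varphi(z_0)\neq0$: near $z_0$ the inverse $\theta^{-1}$ is $C^2$ and increasing, so $\theta(u(z))\le\varphi(z)$ forces $u(z)\le\theta^{-1}(\varphi(z))=:\tilde\varphi(z)$ with equality at $z_0$; thus $u-\tilde\varphi$ has a local maximum at $z_0$, and after a harmless cutoff $\tilde\varphi\in\mathcal{A}(F)$, because $D\tilde\varphi(z_0)=(\theta^{-1})'(\cdot)\,D\varphi(z_0)\neq0$ and so the defining constraint of $\mathcal{A}(F)$ is vacuous near $z_0$. Writing $\lambda=(\theta^{-1})'(\varphi(z_0))>0$ and $\mu=(\theta^{-1})''(\varphi(z_0))$, one has $D\tilde\varphi(z_0)=\lambda D\varphi(z_0)$, $D^2\tilde\varphi(z_0)=\lambda D^2\varphi(z_0)+\mu\,D\varphi(z_0)\otimes D\varphi(z_0)$ and $\tilde\varphi_t(z_0)=\lambda\varphi_t(z_0)$, so geometricity of $F$ gives $F(D\tilde\varphi(z_0),D^2\tilde\varphi(z_0))=\lambda F(D\varphi(z_0),D^2\varphi(z_0))$; dividing the subsolution inequality for $u$ at $z_0$ by $\lambda$ yields $\varphi_t(z_0)+F(D\varphi(z_0),D^2\varphi(z_0))\le0$.

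If instead $D\varphi(z_0)=0$, then $D^2\varphi(z_0)=0$, and since $\varphi\in\mathcal{A}(F)$ one has near $z_0$ the bound $\varphi(z)\le\varphi(z_0)+\varphi_t(z_0)(t-t_0)+f(d(x,x_0))+w(|t-t_0|)$ for some $f\in\mathcal{F}(F)$ and $w=o(r)$; transporting this through the $C^2$ increasing map $\theta^{-1}$ (a Taylor expansion absorbs the quadratic remainder, using $f(s)^2\le f(s)$ for small $s$ and the cone property of $\mathcal{F}(F)$) gives $u(z)\le u(z_0)+\lambda\varphi_t(z_0)(t-t_0)+2\lambda f(d(x,x_0))+\tilde w(|t-t_0|)$ near $z_0$. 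After smoothing its time-part one thereby obtains admissible test functions $\psi_k$ from above for $u$ with $u-\psi_k$ attaining maxima at points $z_k\to z_0$, and then Case~2 of the proof of Proposition~\ref{closed under sup} applies verbatim: Sakai's estimate (Lemma~\ref{Sakais lemma}) for $D^2d(\cdot,x_0)$ and the limit condition \eqref{limit property of f, F} force $F(D\psi_k(z_k),D^2\psi_k(z_k))\to0$, so the subsolution property of $u$ gives $\lambda\varphi_t(z_0)\le0$, i.e. $\varphi_t(z_0)\le0$. Hence $\theta\circ u$ is a subsolution, and reversing inequalities gives the supersolution statement. For the first assertion ($\theta$ merely continuous, $u$ a solution) one again reduces to smooth $\theta$ and, at a maximum (resp.\ minimum) of $\theta\circ u-\varphi$, splits on the sign of $\theta'(u(z_0))$: where it is positive the above applies with $u$ a subsolution (resp.\ supersolution); where it is negative one composes with the locally decreasing $\theta^{-1}$, which interchanges the two types of extremum, and appeals to the opposite inequality for $u$ together with the behaviour of $F$ under $(\zeta,A)\mapsto(-\zeta,-A)$ dictated by \eqref{FandG}; the set $\{\theta'(u(z_0))=0\}$ is absorbed by perturbing $\theta$ to $\theta\pm\varepsilon\,\mathrm{id}$ and invoking Lemma~\ref{sequences of solutions}.

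The step I expect to require the most care is the case $D\varphi(z_0)=0$: one must propagate the estimate on $\varphi$ through $\theta^{-1}$ while keeping the remainder in the precise form $f(d(x,x_0))+w(|t-t_0|)$ with $f\in\mathcal{F}(F)$, and then rerun the $\mathcal{F}$-solution computation of Proposition~\ref{closed under sup}. In the non-monotone statement the subtle point is instead matching the sign of $\theta'$ with the orientation dependence of $F$ through \eqref{FandG}; for the singular cases of interest (mean and Gaussian curvature) this is exactly the content of the corresponding structural identity $F(\zeta,A)+F(-\zeta,-A)\le0$ following from \eqref{FandG} and ellipticity.
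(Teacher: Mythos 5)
Your treatment of the core (nondecreasing) case is correct and is essentially the paper's own route: approximate $\theta$ by smooth strictly increasing $\theta_k$ and invoke Lemma \ref{sequences of solutions}, then compose the test function with $g=\theta^{-1}$ and use geometricity of $F$ when $D\varphi(z_0)\neq 0$. Where you diverge is at points with $D\varphi(z_0)=0$: the paper simply observes that $\psi=g\circ\varphi$ itself lies in $\mathcal{A}(F)$ (your Taylor expansion absorbing the quadratic remainder via $f^2\leq Cf$ and the cone property of $\mathcal{F}(F)$ is exactly the verification of that claim), and then the $D\psi(z_0)=0$ clause of the definition of an $\mathcal{F}$-subsolution gives $\psi_t(z_0)=g'(\varphi(z_0))\varphi_t(z_0)\leq 0$ directly --- no Sakai estimate (Lemma \ref{Sakais lemma}), no condition \eqref{limit property of f, F}, and no approximating maxima $z_k\to z_0$ are needed. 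Your rerun of Case 2 of Proposition \ref{closed under sup} is workable but redundant, and it obliges you to re-justify strictness of the maximum and the convergence $z_k\to z_0$, which you only sketch; the same remark applies to the ``harmless cutoff'' making $\theta^{-1}\circ\varphi$ globally admissible, which the paper avoids by composing globally.

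The genuine gap is in your treatment of merely continuous $\theta$. The inequality $F(\zeta,A)+F(-\zeta,-A)\leq 0$ does not follow from \eqref{FandG} and ellipticity (nothing in \eqref{FandG} relates $G(p,B)$ to $G(-p,-B)$; for the mean curvature $F$ it is an equality, and for the positive Gaussian $F$ it holds only because $F\leq 0$), and, more importantly, it is not sufficient. For $g'=(\theta^{-1})'<0$, geometricity (valid only for $\lambda>0$) yields $F(D\psi,D^2\psi)=|g'|\,F(-D\varphi,-D^2\varphi)$, so converting the subsolution inequality for $u$ into the supersolution inequality for $\theta\circ u$ requires $F(\zeta,A)+F(-\zeta,-A)\geq 0$, while the opposite implication requires $\leq 0$; to pass ``solution $\Rightarrow$ solution'' through locally decreasing pieces one therefore needs the odd symmetry $F(-\zeta,-A)=-F(\zeta,A)$, which fails for the positive Gaussian curvature $F$ (for a classical solution with $u_t>0$ somewhere, $-u$ is not a supersolution, since $F\leq 0$ there). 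Moreover, perturbing $\theta$ to $\theta\pm\varepsilon\,\mathrm{id}$ does not remove sign changes of $\theta'$, so the set where $\theta'(u(z_0))=0$ is not ``absorbed'' that way. To be fair, the paper's own proof of this part is equally terse (uniform approximation by locally monotone functions plus ``the same argument'' for $\theta'<0$, which tacitly uses the same symmetry), and the only part of Theorem \ref{invariance} used later --- in the level-set method and the consistency proof --- is the nondecreasing case, which your argument does establish; but as written your sign-splitting device does not prove the first assertion of the theorem, and you should either restrict it to orientation-free $F$ or flag the needed symmetry explicitly.
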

\begin{proof}
Assume first that $\theta$ is monotone. We may consider a sequence
of smooth functions $\theta_k$ with nonvanishing derivatives,
converging uniformly to $\theta$ over the bounded range of $u$.
Hence by Lemma \ref{sequences of solutions}, we may directly
assume that $\theta'\neq 0$. Notice that $g=\theta^{-1}$ satisfies
$g'\neq0$ too.

\medskip

Suppose first that $\theta'>0$. Let $\varphi\in\mathcal{A}(F)$  and
assume that $\theta\circ u - \varphi$ attains a local maximum at
$z_0$. If we denote $\psi =g \circ \varphi$, it is not difficult to
check that $\psi\in\mathcal{A}(F)$, and $u-\psi$ clearly attains a
local maximum at $(t_0,x_0)$. Consequently
$$\psi_t(z_0)+F(D\psi(z_{0}), D^{2}\psi(z_{0})) \leq 0$$ if $D\psi (t_0,x_0) \not= 0$, and $
\psi_t(t_0,x_0) \leq 0$ otherwise. But $D\psi (z_0)= 0$ if and only
if $D\varphi (z_0)= 0$, and
\begin{eqnarray*} & &\psi_{t}=g'(\varphi)\varphi_{t} \\
                            & & D\psi=g'(\varphi) D\varphi \\
                             & & D^{2}\psi=g''(\varphi)
 D\psi\otimes D\psi+g'(\varphi) D^{2}\varphi.
\end{eqnarray*}
Since $F$ is geometric and $g'>0$, one immediately sees that
$$ \varphi_t(z_0)+F(D\varphi(z_{0}), D^{2}\varphi(z_{0}))=
\frac{1}{g'(\varphi)(z_{0})}\left( \psi_t(z_0)+F(D\psi(z_{0}),
D^{2}\psi(z_{0}))  \right)\leq 0$$ if $D\varphi(z_0) \not= 0$, and $
\varphi_t(t_0,x_0) =\frac{1}{g'(\varphi(z_{0}))}\psi_{t}(z_{0})\leq
0$ otherwise. This shows that $\theta\circ u$ is a subsolution.

If $\theta'<0$, the same argument tells us that if $u$ is
subsolution (respectively supersolution), then $v$ is
supersolution (respectively subsolution). In order to establish
the result for continuous functions, it is enough to observe that
a continuous function can be uniformly approximated by a sequence
of locally monotone functions. Then a local application of Lemma
\ref{sequences of solutions} yields the result.
\end{proof}

\bigskip

Now one can show that, if comparison and existence hold for
\eqref{curvatureevolutioneq} (e.g. when $M$ is a compact Riemannian
manifold of nonnegative curvature), then for every compact level set
$\Gamma_{0}$ there is a unique, well-defined, level set evolution
$\Gamma_{t}$ of $\Gamma_{0}$ by the geometric curvature evolution
equation corresponding to \eqref{curvatureevolutioneq}.

Let $g$ be a continuous function on $M$ with $\Gamma_{0}=\{ x\in M :
g(x)=0\}$, and assume that $\Gamma_0$ is compact. We may also assume
that $g$ is constant outside a bounded neighborhood of $\Gamma _0$,
and in particular bounded. Let $u$ be the unique solution of
\eqref{curvatureevolutioneq} with $u(0,\cdot)=g$. We define
$$\Gamma _t=\{ x\in M: u(t,x)=0\}.$$
\begin{thm}\label{geometric character}
Assume that comparison and existence hold for
\eqref{curvatureevolutioneq}. Let $\hat{g}:M \rightarrow
\mathbb{R}$ be a continuous function satisfying $\Gamma _0=\{ x\in
M : \hat{g}(x)=0\}$ and such that $\hat{g}$ is constant outside a
bounded neighborhood of $\Gamma _0$. Let $\hat{u}$ be the unique
continuous solution of \eqref{curvatureevolutioneq} with initial
condition $\hat{g}$. Then
$$\Gamma _t=\{ x\in M : \hat{u}(t,x)=0\}.$$
\end{thm}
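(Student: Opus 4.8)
The plan is to show $\{\hat u(t,\cdot)=0\}\subseteq\{u(t,\cdot)=0\}$ and, by symmetry, the reverse inclusion, from which the equality follows. I would establish the inclusion by a standard comparison argument combined with the invariance under composition with monotone functions provided by Theorem \ref{invariance}. The essential geometric input is that $g$ and $\hat g$ have the same zero set $\Gamma_0$: this allows one to \emph{dominate} $\hat g$ from above and below by monotone reparametrizations of $g$ (and vice versa), at least near $\Gamma_0$.

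First I would construct a continuous nondecreasing function $\theta:\mathbb{R}\to\mathbb{R}$ with $\theta(0)=0$ such that $\hat g\le\theta\circ g$ on all of $M$. The point is that $\{g>0\}\subseteq\{\hat g \text{ has no particular sign}\}$ is not quite right; rather, since $\Gamma_0=\{g=0\}=\{\hat g=0\}$, on the (bounded) region where $\hat g>0$ we have $g\neq 0$, and one can choose $\theta$ increasing and large enough on $\{g>0\}$ (and $\le 0$ on $(-\infty,0]$) so that $\theta\circ g\ge\hat g$ there, while $\theta\circ g\ge 0\ge\hat g$ wherever $\hat g\le 0$; outside the bounded neighborhood both functions are constant, so the construction is easily arranged. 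By Theorem \ref{invariance}, $\theta\circ u$ is a solution of \eqref{curvatureevolutioneq}, and it has initial datum $\theta\circ g\ge\hat g=\hat u(0,\cdot)$. Since $\theta\circ u$ and $\hat u$ are both bounded (the initial data are eventually constant), the comparison principle applies and gives $\hat u(t,x)\le\theta(u(t,x))$ for all $(t,x)$. Symmetrically, choosing a nonincreasing reparametrization, or equivalently applying the same construction to $-\hat g$ and $g$, one obtains a continuous nondecreasing $\tilde\theta$ with $\tilde\theta(0)=0$ and $\tilde\theta\circ g\le\hat g$ on $M$, hence $\tilde\theta(u(t,x))\le\hat u(t,x)$ for all $(t,x)$.

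Combining the two inequalities, at any point $(t,x)$ with $u(t,x)=0$ we get $\tilde\theta(0)\le\hat u(t,x)\le\theta(0)$, i.e. $\hat u(t,x)=0$. This proves $\{u(t,\cdot)=0\}\subseteq\{\hat u(t,\cdot)=0\}$, and interchanging the roles of $g$ and $\hat g$ gives the opposite inclusion, so $\Gamma_t=\{x:u(t,x)=0\}=\{x:\hat u(t,x)=0\}$, as claimed.

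The step I expect to be the main obstacle is the construction of the monotone function $\theta$ with $\hat g\le\theta\circ g$ globally: one must check that $\theta$ can be chosen continuous and nondecreasing with $\theta(0)=0$ despite $g$ possibly oscillating in sign near $\Gamma_0$. The key observation making this possible is that $\{|\hat g|\ge\varepsilon\}$ and $\{|g|\le\delta\}$ are disjoint for suitable $\delta=\delta(\varepsilon)$ (by compactness of $\Gamma_0$ and continuity), so one can define $\theta$ on $[0,\infty)$ by setting $\theta(s)=\sup\{\hat g(x): g(x)=s'\text{ for some }0< s'\le s\}$ (and $\theta\le 0$ on $(-\infty,0]$), taking the upper semicontinuous regularization if needed, and then perturbing slightly to make it continuous and strictly below no constraint; the eventual constancy of $g$ and $\hat g$ outside a bounded set keeps everything finite. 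Once this $\theta$ is in hand, the rest is a direct invocation of Theorems \ref{invariance} and the comparison hypothesis.
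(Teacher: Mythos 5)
Your overall strategy --- sandwiching $\hat u$ between reparametrizations of $u$ via Theorem \ref{invariance} and the comparison principle --- is exactly the route the paper takes (its proof is a one-line reference to \cite[Theorem 5.1]{ES1}). The genuine gap is in the step you yourself flagged as the main obstacle: a continuous \emph{nondecreasing} $\theta$ with $\theta(0)=0$ and $\hat g\le\theta\circ g$ on all of $M$ need not exist, and for a structural rather than technical reason. Monotonicity together with $\theta(0)=0$ forces $\theta\le 0$ on $(-\infty,0]$, so you would need $\hat g\le 0$ everywhere on $\{g<0\}$. Nothing in the hypotheses guarantees this: the theorem only assumes $\{g=0\}=\{\hat g=0\}=\Gamma_0$, and the two representatives may have opposite signs on a component of $M\setminus\Gamma_0$ (the simplest case is $\hat g=-g$; concretely, $g$ negative inside a sphere and positive outside, $\hat g$ the reverse). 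At a point with $g(x)<0<\hat g(x)$ no nondecreasing $\theta$ with $\theta(0)=0$ satisfies $\theta(g(x))\ge\hat g(x)$. Your disjointness observation about $\{|\hat g|\ge\varepsilon\}$ and $\{|g|\le\delta\}$ controls the moduli $|g|,|\hat g|$ near $\Gamma_0$ but says nothing about signs, so it does not repair this; the same objection applies to your explicit formula for $\theta$, which only constrains $\theta$ on $(0,\infty)$.

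The repair is the classical one and stays inside the paper's toolbox: work with absolute values. By compactness of $\Gamma_0$ and the eventual constancy of both data, there is a continuous nondecreasing $h:[0,\infty)\to[0,\infty)$ with $h(0)=0$, $h>0$ on $(0,\infty)$, and $|\hat g|\le h(|g|)$ on $M$ (take the monotone regularization of $s\mapsto\sup\{|\hat g(x)|:|g(x)|\le s\}$). The map $s\mapsto h(|s|)$ is continuous but not monotone; this is harmless, because Theorem \ref{invariance} asserts that $\theta\circ u$ is a solution for \emph{every} continuous $\theta$ when $u$ is a bounded continuous solution --- monotonicity is only needed when starting from a mere sub- or supersolution. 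Hence $h(|u|)$ and $-h(|u|)$ are solutions with initial data $h(|g|)\ge\hat g$ and $-h(|g|)\le\hat g$, and two applications of comparison give $-h(|u(t,x)|)\le\hat u(t,x)\le h(|u(t,x)|)$, so $u(t,x)=0$ implies $\hat u(t,x)=0$; exchanging the roles of $g$ and $\hat g$ finishes the proof exactly as in your final step. (In the noncompact case one must also verify the condition $\limsup_{(t,x)\to\infty}(\cdot)\le 0$ required by the comparison theorems, which holds because the data are constant outside a bounded set.)
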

\begin{proof} This is a consequence of Theorem \ref{invariance} and the comparison
principle. It follows exactly as in the case $M=\mathbb{R}^{n}$, see
\cite[Theorem 5.1]{ES1}, or \cite[Chapter 4]{Giga}, for instance.
\end{proof}

\begin{cor} The definition of $\Gamma _t=\{ x\in M : u(t,x)=0\}$ does not depend upon the particular choice of the function
$g$ satisfying $\Gamma _0=\{ x\in M : g(x)=0\}$.
\end{cor}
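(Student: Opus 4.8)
The plan is to obtain the corollary by a direct appeal to Theorem~\ref{geometric character}, after checking that the class of functions used to represent a fixed compact set $\Gamma_0$ is exactly the class to which that theorem applies. First I would recall the set-up: for a compact $\Gamma_0\subset M$ one \emph{defines} the evolution by picking a continuous $g:M\to\mathbb{R}$ with $\Gamma_0=\{x:g(x)=0\}$ that is constant outside a bounded neighbourhood of $\Gamma_0$ (a vacuous requirement when $M$ is compact), letting $u$ be the unique solution of \eqref{curvatureevolutioneq} with $u(0,\cdot)=g$, and setting $\Gamma_t=\{x\in M:u(t,x)=0\}$. Such representatives always exist — for instance a suitable truncation of $\pm\,\textrm{dist}(\cdot,\Gamma_0)$ works — so the construction is non-empty to begin with.

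Next I would take two such representatives $g_1,g_2$ of the \emph{same} $\Gamma_0$, denote by $u_1,u_2$ the corresponding (unique, by comparison and existence) solutions of \eqref{curvatureevolutioneq}, and write $\Gamma_t^{(1)},\Gamma_t^{(2)}$ for the two a~priori different evolutions they produce. The key step is simply to invoke Theorem~\ref{geometric character} with $g:=g_1$ and $\hat g:=g_2$: its hypotheses (continuity, the common zero level set $\Gamma_0$, and constancy outside a bounded neighbourhood of $\Gamma_0$) hold by construction, so it yields
$$\Gamma_t^{(1)}=\{x\in M:u_1(t,x)=0\}=\{x\in M:u_2(t,x)=0\}=\Gamma_t^{(2)}\qquad\text{for every }t\ge 0,$$
which is precisely the assertion of the corollary.

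I do not expect any real obstacle here: the corollary is essentially a restatement of Theorem~\ref{geometric character} (whose proof in turn rests on the invariance of solutions under continuous reparametrisation, Theorem~\ref{invariance}, combined with the comparison principle). The only point that warrants a sentence of care is the standing restriction on the representatives $g$ when $M$ is noncompact — one must remain within the class of continuous functions that are constant outside a bounded set, since that is the setting in which comparison and existence for \eqref{curvatureevolutioneq} were proved; within that class the argument above applies verbatim, and when $M$ is compact the restriction is empty, so the conclusion holds for every continuous $g$ with $\{g=0\}=\Gamma_0$.
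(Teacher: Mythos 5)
Your proposal is correct and matches the paper exactly: the corollary is an immediate consequence of Theorem \ref{geometric character}, obtained by applying that theorem to two representatives $g_1,g_2$ of the same $\Gamma_0$, and the paper offers no further argument. Your remark about staying within the class of continuous representatives that are constant outside a bounded neighbourhood of $\Gamma_0$ (vacuous for compact $M$) is the right caveat and is consistent with the paper's standing assumptions.
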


It can also be checked that the evolution $\Gamma_{0}\mapsto
\mathcal{K}(t)\Gamma_{0}:=\Gamma_{t}$ thus defined has the semigroup
property
$$
\mathcal{K}(t+s)=\mathcal{K}(t)\mathcal{K}(s).
$$

Some other properties of the evolutions can be established as in the
case $M=\mathbb{R}^{n}$. For instance, in the case of the evolution
by mean curvature, it is possible to show that if
$\Gamma_{0}=\partial U$ is a smooth connected hypersurface with
positive mean curvature with respect to the inner unit normal field,
then $\Gamma_{t}$ continues to have positive mean curvature as long
as it exists, in the sense that
$$\Gamma_{t}=\{x\in M: v(x)=t\},$$
where $v$ is the solution of the stationary problem
$$
\left\{
  \begin{array}{ll}
   \displaystyle{ -\trace\left((I-\frac{Du\otimes Du}{|Du|^{2}}) D^{2}u\right)=1}, & \hbox{ on $U$;} \\
    v=0, & \hbox{ on $\Gamma_{0}=\partial U$,}
  \end{array}
\right.
$$
(which admits a unique viscosity solution, see \cite{AFS}).

However, one has to be very cautious and not take it for granted
that all the usual geometrical properties of the generalized
evolutions by mean curvature or by Gaussian curvature could be
immediately extended from the Euclidean to the Riemannian setting.
As a matter of fact, many of these properties are very likely to
fail in the case of manifolds of negative curvature. We will
present several counterexamples and related conjectures in Section
8.

\bigskip

\section{Consistency with the classical motion}
In this section we suppose that equation
(\ref{curvatureevolutioneq}) arises from a classical geometric
evolution for hypersurfaces in $M$. We establish the consistency
of the level set evolution equation with this classical geometric
motion.

More precisely, suppose $\left(  \Gamma_{t}\right)  _{t\in\left[
0,T\right] }$ is a family of smooth, compact, orientable
hypersurfaces in $M$ evolving according to a classical geometric
motion, locally depending only on its normal vector fields and
second fundamental forms. In particular, we shall assume that
$\Gamma_{t}$ is the boundary of a bounded open set $U_{t}\subset
M$ and that there exists a family of diffeomorphisms of manifolds
with
boundary%
\[
\phi^{t}:\overline{U_{0}}\rightarrow\overline{U_{t}},\qquad
t\in\left[ 0,T\right]  ,
\]
such that:

\begin{itemize}
\item[(i)] $\phi^{0}=$\textrm{Id}, and,

\item[(ii)] for every $x\in\Gamma_{0}$ the following holds:%
\begin{equation}
\frac{d}{dt}\phi^{t}\left(  x\right)  =G\left(  \nu\left(
t,\phi^{t}\left( x\right)  \right)  ,\nabla^{\Gamma}\nu\left(
t,\phi^{t}\left(  x\right)
\right)  \right)  , \label{CHE}%
\end{equation}
where $\nu\left(  t,\cdot\right)  $ is a unit normal vector field
to $\Gamma_{t}$, and the linear map
\[
\nabla^{\Gamma}\nu\left(  t,x\right)  :\left(  T\Gamma_{t}\right)
_{x}\ni \xi\mapsto\nabla_{\xi}^{T}\nu\left(  t,x\right)  \in\left(
T\Gamma_{t}\right)
_{x}%
\]
and $\nabla^{T}$ stands for the orthogonal projection onto $\left(
T\Gamma_{t}\right) _{x}$ of covariant derivative in $M$.
\end{itemize}

Classical motion by mean curvature corresponds to taking $f\left(
\nu ,\nabla^{\Gamma}\nu\right)  =\mathrm{tr}\left(
-\nabla^{\Gamma}\nu\right) \nu$, whereas classical motion by
Gaussian curvature is defined by $f\left(
\nu,\nabla^{\Gamma}\nu\right)  =\mathrm{det}\left(  -\nabla^{\Gamma}%
\nu\right)  \nu$. The level set evolution equation induced by
(\ref{CHE}) is of the form (\ref{curvatureevolutioneq}) where $F$
is related to $G$ through formula (\ref{FandG}). As before, we
assume that $F$ is elliptic, translation invariant and geometric.
In this case we already know that $F$ is continuous, and in fact
smooth off $\{\zeta=0\}$, because $F$ is of the form (\ref{FandG})
with $G$ smooth.

Define $d:\left[  0,T\right]  \times M\rightarrow\mathbb{R}$, the
signed
distance function from $\Gamma_{t}$, as:%
\[
d\left(  t,x\right)  :=\left\{
\begin{array}
[c]{ll}%
\mathrm{dist}\left(  x,\Gamma_{t}\right)  & \text{if }x\in U_{t}\smallskip\\
-\mathrm{dist}\left(  x,\Gamma_{t}\right)  & \text{if }x\in
M\setminus U_{t}.
\end{array}
\right.
\]

\begin{lem}
\label{Lemmadsmooth}There exist constants $K,\delta>0$ such that
$d$ is smooth
in%
\[
I_{\delta}:=\left\{  \left(  t,x\right)  \in\left[  0,T\right]
\times M\;:\;\left\vert d\left(  t,x\right)  \right\vert
<\delta\right\}
\]
and%
\begin{equation}
\left\vert d_{t}+F\left(  Dd,D^{2}d\right)  \right\vert \leq
K\left\vert
d\right\vert \qquad\text{in }I_{\delta}. \label{resto}%
\end{equation}

\end{lem}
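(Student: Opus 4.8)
The statement has two parts: the smoothness of $d$ in a tubular neighborhood $I_\delta$ of the moving front $\bigcup_t \Gamma_t$, and the near-solution estimate \eqref{resto}. For the first part, I would invoke the standard fact that the signed distance to a smooth compact hypersurface is smooth on a one-sided collar whose width is controlled by the reach (equivalently, by bounds on the second fundamental form and on the injectivity radius of $M$ along $\Gamma_t$). Since $\left(\Gamma_t\right)_{t\in[0,T]}$ is a smooth family of compact hypersurfaces moving through the compact-in-time parameter $[0,T]$, these geometric quantities are uniformly bounded, so there is a uniform $\delta>0$ with $d\in C^\infty(I_\delta)$; smoothness in $t$ comes from smoothness of the family $\phi^t$ together with smoothness in $x$, via the implicit function theorem applied to the relation characterizing the foot-point projection onto $\Gamma_t$. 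I would state this carefully but not belabor it, citing the compactness of $[0,T]\times\bigcup_t\Gamma_t$.

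\textbf{The estimate \eqref{resto}.} The key structural facts about the signed distance function are: (a) $|Dd|\equiv 1$ on $I_\delta$, so $Dd(t,x)=\nu(t,\pi_t(x))$ transported along the normal geodesic, where $\pi_t$ is the nearest-point projection onto $\Gamma_t$; (b) $Dd$ lies in the kernel of $D^2 d$, i.e. $D^2d\,(\nabla d,\cdot)=0$, which means $S_{Dd}D^2d = D^2d$ with $S_{Dd}=I-\frac{Dd\otimes Dd}{|Dd|^2}$; consequently, by the geometricity/\eqref{FandG} relation, $F(Dd,D^2d)=|Dd|\,G\bigl(Dd,\tfrac{1}{|Dd|}S_{Dd}D^2d\bigr)=G(\nu,D^2d)$; (c) the restriction of $D^2d(t,x)$ to $T\Gamma_t$ at the foot point $y=\pi_t(x)$ is exactly $-\nabla^\Gamma\nu(t,y)$ when $d=0$, and for $d\neq 0$ it differs from the shape operator of the parallel hypersurface at distance $d$ by a quantity that, by the Riccati comparison / Jacobi-field expansion for $D^2d$ along normal geodesics, equals $-\nabla^\Gamma\nu(t,y)+O(|d|)$ with the $O$-constant controlled by the curvature of $M$ and the $C^2$-geometry of $\Gamma_t$; and (d) $d_t(t,x) = -\langle \frac{d}{dt}\phi^t(y),\nu(t,y)\rangle + O(|d|) = -\langle G(\nu,\nabla^\Gamma\nu),\nu\rangle + O(|d|)$, using \eqref{CHE}. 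Putting (b), (c), (d) together: at points where $d=0$ the classical evolution equation gives $d_t + F(Dd,D^2d)=0$ exactly, and for $|d|<\delta$ the two sides differ by terms bounded by $K|d|$, where $K$ absorbs the Lipschitz constant of $G$ (smooth on the compact set of relevant arguments), the curvature bounds of $M$, and the uniform $C^2$-bounds on $\Gamma_t$ and on $t\mapsto\phi^t$.

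\textbf{Execution order.} First I would record the two pointwise identities $|Dd|=1$ and $D^2d\cdot Dd = 0$, derive $F(Dd,D^2d)=G(\nu,D^2d)$ from \eqref{FandG}, and fix the smooth collar $I_\delta$. Second, I would parametrize $I_\delta$ by the normal exponential map of $\Gamma_t$, writing $x=\exp^{\perp}_{t,y}(d\,\nu(t,y))$, and expand both $D^2 d(t,x)$ (via the Jacobi/Riccati equation along the normal geodesic) and $d_t(t,x)$ (by differentiating the defining relation $d(t,\phi^t(y)+\text{normal part})$) to first order in $d$, isolating the $d=0$ terms. Third, I would substitute into $d_t+F(Dd,D^2d)$, use \eqref{CHE} to kill the zeroth-order part, and collect the remainders into a single bound $K|d|$, invoking uniformity from compactness of $[0,T]$ and of each $\Gamma_t$.

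\textbf{Main obstacle.} The delicate step is (c)–the first-order expansion of $D^2 d$ away from the front—because it is where the ambient curvature of $M$ enters and where one must be careful that the error is genuinely $O(|d|)$ uniformly, rather than merely $o(1)$; the Riccati equation $S' = -S^2 - R_\nu$ for the shape operator $S$ of the level sets of $d$ along a normal geodesic shows $S(d) = S(0) + O(|d|)$ with constant depending on $\|S(0)\|$ and on the curvature operator $R_\nu$, and one must check these are uniformly bounded in $(t,y)$. A secondary technical point is matching the time-derivative $d_t$ with the normal velocity $\langle \partial_t\phi^t,\nu\rangle$ up to $O(|d|)$ — this is a standard computation (the normal velocity of $\Gamma_t$ equals $-d_t$ on $\Gamma_t$, and the correction off $\Gamma_t$ is first order in $d$), but it requires knowing $\phi^t$ moves points smoothly, which is given. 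Once these uniform bounds are in hand, assembling \eqref{resto} is routine.
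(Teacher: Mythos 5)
Your proof is correct, and the smoothness part coincides with the paper's argument (normal exponential coordinates off $\Gamma_t$, with uniformity of $\delta$ coming from compactness of $[0,T]$ and of the fronts). For the estimate \eqref{resto}, however, you take a substantially heavier route than the paper. The paper's entire argument is: since $|Dd|=1$ on $I_\delta$, the pair $(Dd,D^2d)$ stays away from the singularity of $F$, so $r(t,x):=d_t+F(Dd,D^2d)$ is a \emph{smooth} function on $I_\delta$; it vanishes on $\{d=0\}$ because the front moves classically; and a smooth function vanishing on the regular level set $\{d=0\}$ satisfies $|r|\le K|d|$ locally by Taylor's theorem (integrate $\partial_s r$ along the normal geodesic from the foot point), with a global $K$ by compactness of $\bigcup_{t\in[0,T]}\{t\}\times\Gamma_t$. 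In particular the Riccati/Jacobi first-order expansion of $D^2d$ in $d$ --- which you single out as the main obstacle --- is entirely avoidable: one never needs to identify the $O(|d|)$ terms, only to know that $r$ is smooth and vanishes at $d=0$. What your computation does supply, and the paper leaves implicit, is the verification that $r=0$ on $\Gamma_t$, i.e.\ that \eqref{CHE} together with $|Dd|=1$, $D^2d\,(\nabla d,\cdot)=0$ and \eqref{FandG} forces $d_t+F(Dd,D^2d)=0$ on the front; that part of your items (b)--(d) is worth keeping (modulo a careful check of the sign conventions relating $\nu$, $\nabla^{\Gamma}\nu$ and $D^2 d$), while the first-order expansions and the attendant uniformity checks can simply be discarded in favor of the smoothness-plus-vanishing argument.
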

\begin{proof}
We consider geodesic normal coordinates from $\Gamma_{t}$,%
\[
\Phi\left(  t,s,x\right)  :=\exp_{x}\left(  s\nu\left(  t,x\right)
\right)  ,
\]
assuming that $\nu\left(  t,x\right)  $ points towards the
interior $U_{t}$ for
every $x\in\Gamma_{t}$. Clearly, for $s$ small enough,%
\[
d\left(  t,\Phi\left(  t,s,x\right)  \right)  =s.
\]
Given $x_{0}\in\Gamma_{0}$ there exists a neighborhood $V$ of
$x_{0}$ in $\Gamma_{0}$ and an interval $\left(  -r,r\right)  $
such that $\Phi\left( t,\cdot,\cdot\right)  $ is a diffeomorphism
from $\left(  -r,r\right) \times\phi^{t}\left(  V\right)  $ onto
its image $X_{t}:=\Phi\left(  \left( -r,r\right)
\times\phi^{t}\left(  V\right)  \right)  $ for $t\in\left[
0,T\right]  $. Note that $\Phi$ is also smooth in $t$. Denote by
$\Psi\left( t,\cdot\right)  $ the inverse of $\Phi\left(
t,\cdot,\cdot\right)  $ and
write%
\[
\Psi\left(  t,y\right)  :=\left(  \rho\left(  t,y\right)  ,X\left(
t,y\right)  \right)  \text{.}%
\]
Now for $x\in\phi^{t}\left(  V\right)  $ and $s\in\left(
-r,r\right)  $ we have $X\left(  t,\Phi\left(  t,s,x\right)
\right)  =x$ and $\rho\left( t,\Phi\left(  t,s,x\right)  \right)
=s$. Both $X$ and $\rho$ are smooth in $t$, and clearly $\rho=d$
in $ {\textstyle\bigcup\nolimits_{t\in\left[  0,T\right]  }}
\left\{ t\right\}  \times X_{t}=I_{\delta}$.

In order to prove (\ref{resto}) it suffices to note that, since,
for $x\in\Gamma_{t}$, $Dd\left(  t,x\right)  =\nu\left( t,x\right)
\neq0$,
\[
r\left(  t,x\right)  :=d_{t}+F\left(  Dd,D^{2}d\right)
\]
is a smooth function vanishing for $x\in\Gamma_{t}$. This gives
(\ref{resto}) locally; a global bound then follows by the
compactness of $\Gamma_{t}$.
\end{proof}
Next we state and prove the main result of this section.

\begin{thm}
Let $u$ be the unique viscosity solution to the level set equation
(\ref{curvatureevolutioneq}) with initial datum
$u|_{t=0}=d|_{t=0}$. Then, for every $t\in\left[  0,T\right]  $,
the zero level set of $u\left(
t,\cdot\right)  $ coincides with $\Gamma_{t}$:%
\[
\Gamma_{t}=\left\{  x\in M\;:\;u\left(  t,x\right)  =0\right\}  .
\]

\end{thm}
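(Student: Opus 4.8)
The strategy is the standard one: sandwich $u$ between a sub- and a supersolution manufactured out of the signed distance function $d$ of the classical flow, and then read off its zero set. The two ingredients that make this work are already in place, namely the geometricity of $F$ (property ({\bf D})) and Lemma \ref{Lemmadsmooth}, which gives smoothness of $d$ in the tube $I_{\delta}$ together with $|d_{t}+F(Dd,D^{2}d)|\le K|d|$ there. Concretely, I would fix $\Lambda\ge K$, the truncated signed distance $\widehat d:=(d\vee(-\delta/2))\wedge(\delta/2)$, and a smooth bounded strictly increasing $\Phi:\mathbb{R}\to\mathbb{R}$ with $\Phi(s)=s$ for $s\le 0$ and $\Phi$ concave on $[0,\infty)$ (hence $\Phi(0)=0$, $\Phi'(0)=1$, $\Phi''(0)=0$, and $|\Phi'(s)\,s|\le|\Phi(s)|$ for every $s$), and set
$$\underline w(t,x):=\Phi\bigl(\widehat d(t,x)\bigr)\,e^{-\Lambda t\,\mathrm{sgn}\,\widehat d(t,x)},\qquad \overline w(t,x):=\Phi\bigl(\widehat d(t,x)\bigr)\,e^{+\Lambda t\,\mathrm{sgn}\,\widehat d(t,x)}.$$
Both are continuous and bounded, vanish precisely on $\bigcup_{t}\{t\}\times\Gamma_{t}$, are positive on $\{d>0\}$ and negative on $\{d<0\}$, and $\underline w(0,\cdot)=\overline w(0,\cdot)=\Phi(\widehat d(0,\cdot))=:g$, a continuous function with $\{g=0\}=\Gamma_{0}$ that is constant outside a bounded set.

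The heart of the proof is to check that $\underline w$ is a subsolution and $\overline w$ a supersolution of \eqref{curvatureevolutioneq} on $(0,T)\times M$. On the smooth part of the tube, where $\widehat d=d$ and $|Dd|=1$, geometricity collapses $F(D\underline w,D^{2}\underline w)$ into a positive scalar multiple of $F(Dd,D^{2}d)$, so the required inequality reduces to a one-dimensional one whose error term has absolute value at most $K|\Phi'(\widehat d)\,\widehat d|\le K|\Phi(\widehat d)|$, and is therefore dominated by the term $\Lambda|\Phi(\widehat d)|$ produced by differentiating the exponential factor, because $\Lambda\ge K$. Outside the tube $\widehat d$ is locally constant, so there $\underline w$ and $\overline w$ are functions of $t$ alone with the correct monotonicity to be sub- resp. supersolutions; this is exactly where the singularity of $F$ on $\{Du=0\}$ must be faced, and it causes no trouble since the condition in Definition \ref{def of the A(F) class} is then being tested against functions of $t$ alone. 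There remain two non-smooth loci: the tube boundary $\{|d|=\delta/2\}$, where a test function touching from the relevant side is forced onto the adjacent smooth branch and one concludes by ellipticity and Lemma \ref{Lemmadsmooth}; and the corner $\bigcup_{t>0}\{t\}\times\Gamma_{t}$, addressed below.

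Granting this, the rest is quick. The map $\Theta(s):=\Phi\bigl((s\vee(-\delta/2))\wedge(\delta/2)\bigr)$ is continuous with $\Theta^{-1}(0)=\{0\}$ and $\Theta\circ(d|_{t=0})=g$, so by the invariance property (Theorem \ref{invariance}) and uniqueness of solutions $u':=\Theta\circ u$ is the solution of \eqref{curvatureevolutioneq} with initial datum $g$. Since $\underline w(0,\cdot)=g=u'(0,\cdot)=\overline w(0,\cdot)$, the comparison principle (Theorem \ref{compact} or Theorem \ref{general comparison result}) gives $\underline w\le u'\le\overline w$ on $[0,T)\times M$. Reading off signs, $u'(t,\cdot)>0$ on $U_{t}$, $u'(t,\cdot)<0$ on $M\setminus\overline{U_{t}}$, and $u'(t,\cdot)=0$ on $\Gamma_{t}$; hence $\{x:u'(t,x)=0\}=\Gamma_{t}$. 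Finally $\{x:u(t,x)=0\}=\{x:\Theta(u(t,x))=0\}=\{x:u'(t,x)=0\}=\Gamma_{t}$, which is the assertion.

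The step I expect to be the real obstacle is the sub/supersolution property along the corner $\{t\}\times\Gamma_{t}$ for $t>0$, where $\underline w$ and $\overline w$ are only Lipschitz. There a $C^{2}$ admissible test function may touch $\underline w$ from above (or $\overline w$ from below); its spatial gradient is then a positive multiple of the unit normal $\nu$ to $\Gamma_{t}$, the tangential part vanishing since $\underline w\equiv 0$ along $\Gamma_{t}$, and one must extract the parabolic inequality from the one-sided second-order information coming from the two smooth branches, from the classical evolution law $\tfrac{d}{dt}\phi^{t}=G(\nu,\nabla^{\Gamma}\nu)$ on $\Gamma_{t}$, and from the geometricity and ellipticity of $F$. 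This is precisely the computation carried out in the Euclidean setting in \cite{ES1} and \cite[Chapter 4]{Giga}; here it is the same argument, with the Riemannian bookkeeping already absorbed into Lemma \ref{Lemmadsmooth}.
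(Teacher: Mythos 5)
Your strategy is the same as the paper's: build barriers out of the signed distance function using Lemma \ref{Lemmadsmooth}, invoke comparison and Theorem \ref{invariance}, and read off the zero set. The differences are in the barrier construction. The paper uses the one-sided truncations $v=e^{tK}((d\vee 0)\wedge\delta/2)$ and $w=e^{-tK}((d\wedge 0)\vee(-\delta/2))$, which are flat on one side of $\Gamma_t$; consequently each barrier has exactly one ``hard'' kink (at $\Gamma_t$), the truncations at $\pm\delta/2$ being automatically of the right type (a minimum of supersolutions is a supersolution, a maximum of subsolutions is a subsolution), and the price paid is that comparison only yields $\{u>0\}=U_t$, so the paper needs the additional $\varepsilon$-shift argument with $u+\varepsilon$, $v_\varepsilon$, $w_\varepsilon$ to pin down $\{u=0\}$. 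Your two-sided signed barriers $\underline w\le u'\le\overline w$ eliminate that last step entirely, since they are strictly signed off $\Gamma_t$; this is a genuine streamlining of the endgame. The corner argument at $Q=\bigcup_t\{t\}\times\Gamma_t$, which you correctly identify as the crux and which the paper carries out in full (showing $(\varphi_t,D\varphi)=\lambda(d_t,Dd)$ with $\lambda\ge 0$, the tangential Hessian comparison \eqref{hess phi}, then geometricity plus $d_t+F(Dd,D^2d)=0$ on $\Gamma_t$), goes through verbatim for your barriers — in fact marginally more cleanly, since for $\overline w$ the admissible $\lambda$ lies in $[e^{-\Lambda t_0},e^{\Lambda t_0}]$ and the degenerate case $\lambda=0$ cannot occur.

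The one point where your justification as written is wrong is the tube boundary $\{|d|=\delta/2\}$. Your $\underline w$ near $\{d=\delta/2\}$ is $\min\bigl(\Phi(d),\Phi(\delta/2)\bigr)e^{-\Lambda t}$, a \emph{minimum} of two smooth functions, which is the wrong-type kink for a subsolution (and symmetrically $\overline w$ has a maximum-type kink at $\{d=-\delta/2\}$, the wrong type for a supersolution). A test function touching from above there is \emph{not} forced onto one of the two branches: its full space-time gradient at the touching point is an arbitrary convex combination $\theta D\psi+(1-\theta)Dh$, $\theta\in[0,1]$, of the two branch gradients, and its Hessian is only controlled tangentially to the kink hypersurface. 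So you need a second corner-type argument there, not just ``ellipticity and Lemma \ref{Lemmadsmooth}''. It does close: for $\theta>0$ one gets $D^2\varphi\ge\theta\Phi'(\delta/2)e^{-\Lambda t_0}D^2d$ on $(Dd)^{\perp}$, hence by geometricity and ellipticity $\varphi_t+F(D\varphi,D^2\varphi)\le e^{-\Lambda t_0}\bigl(\theta\Phi'(\delta/2)\tfrac{\delta}{2}K-\Lambda\Phi(\delta/2)\bigr)\le 0$ using $s\Phi'(s)\le\Phi(s)$ and $\Lambda\ge K$, while $\theta=0$ gives $\varphi_t=-\Lambda\Phi(\delta/2)e^{-\Lambda t_0}<0$; but this must be written out, since it is exactly the kind of verification the construction was supposed to avoid. (Two cosmetic slips: $\Phi$ cannot be both bounded and equal to the identity on $(-\infty,0]$ — irrelevant since only $\Phi|_{[-\delta/2,\delta/2]}$ is used — and with $\Phi'(s)s=\Phi(s)$ for $s\le 0$ your key inequality is an equality there, which is still sufficient.)
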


\begin{proof}
Define
\[
v\left(  t,x\right)  :=e^{tK}\left(  \left(  d\left(  t,x\right)
\vee0\right)  \wedge\delta/2\right)  ,
\]
where $K$ is the constant given by (\ref{resto}). We shall prove
that $v$ is a viscosity \emph{supersolution} to equation
(\ref{curvatureevolutioneq}). Clearly, $v|_{t=0}\geq
u|_{t=0}\wedge\delta/2$, and, by Theorem \ref{invariance},
$u\wedge\left( \delta/2\right) $ is a viscosity solution to
(\ref{curvatureevolutioneq}) as well. The comparison principle
(Theorem \ref{compact}) then will ensure that $v\geq u\wedge\left(
\delta/2\right)  $. In particular,
\begin{equation}
\left\{  x\in M\;:\;u\left(  t,x\right)  >0\right\}  \subseteq
U_{t},\qquad
t\in\left[  0,T\right]  .\label{sps in ut}%
\end{equation}
On the other hand, we shall prove that
\[
w\left(  t,x\right)  :=e^{-tK}\left(  \left(  d\left(  t,x\right)
\wedge0\right)  \vee\left(  -\delta/2\right)  \right)
\]
is a viscosity \emph{subsolution} to (\ref{curvatureevolutioneq}).
Now $w|_{t=0}\leq u|_{t=0}\vee\left(  -\delta/2\right)  $, and the
comparison principle will imply that $w\leq u\vee\left(
-\delta/2\right)  $. This, together with (\ref{sps in ut}) yields
\[
\left\{  x\in M\;:\;u\left(  t,x\right)  >0\right\}  =U_{t},\qquad
t\in\left[ 0,T\right]  .
\]
Now take $\varepsilon>0$ and let $u_{\varepsilon}:=u+\varepsilon$;
this is again a viscosity solution to
(\ref{curvatureevolutioneq}). It turns out that%
\[
v_{\varepsilon}\left(  t,x\right)  :=e^{tK}\left(  \left(  \left(
d\left( t,x\right)  +\varepsilon\right)  \vee0\right)
\wedge\delta/2\right)  ,
\]%
\[
w_{\varepsilon}\left(  t,x\right)  :=e^{-tK}\left(  \left(  \left(
d\left( t,x\right)  +\varepsilon\right)  \wedge0\right)
\vee\left(  -\delta/2\right) \right)  ,
\]
are respectively super and subsolutions to
(\ref{curvatureevolutioneq}) provided $\varepsilon$ is much
smaller than $\delta$ -- namely, small enough to ensure that
$v_{\varepsilon}$ and $w_{\varepsilon}$ are smooth in the regions
$0<d\left(t,x\right)+\varepsilon<\delta/2$ and
$-\delta/2<d\left(t,x\right)+\varepsilon<0$, respectively.
Applying the comparison principle as we did before we ensure that
\[
\left\{  x\in M\;:\;u\left(  t,x\right)  >-\varepsilon\right\}
=\left\{  x\in M\;:\;d\left(  t,x\right)  >-\varepsilon\right\}  .
\]
Letting $\varepsilon$ go to zero we conclude that the zero level
set of $u\left(  t,\cdot\right)  $ is precisely $\Gamma_{t}$, as
we wanted to prove.
\medskip
We now show our claim that $v$ is a supersolution to
(\ref{curvatureevolutioneq}). Start noticing that Lemma
\ref{Lemmadsmooth} ensures that, for $t\in\left[
0,T\right]  $ the following holds:%
\begin{equation}
\left\{
\begin{array}
[c]{ll}%
v_{t}\left(  t,x\right)  +F\left(  Dv\left(  t,x\right)
,D^{2}v\left( t,x\right)  \right)  \geq0, & \text{if }0<d\left(
t,x\right)  <\delta
/2,\medskip\\
v_{t}\left(  t,x\right)  \geq0, & \text{if }d\left(  t,x\right)
<0\text{ or }d\left(  t,x\right)  >\delta/2.
\end{array}
\right.  \label{sps}%
\end{equation}
Let $\left(  t_{0},x_{0}\right)  \in\left[  0,T\right]  \times M$
and $\varphi\in\mathcal{A}\left(  F\right)  $ be such that
$v-\varphi\ $has a local minimum at $\left(  t_{0},x_{0}\right)
$. Without loss of generality, we may assume that
\begin{equation}
\varphi\left(  t_{0},x_{0}\right)  =v\left(  t_{0},x_{0}\right)
,\label{eqsps}%
\end{equation}
and that
\begin{equation}
\varphi\leq v,\qquad\text{locally around }\left(
t_{0},x_{0}\right)
.\label{min}%
\end{equation}
Since the level sets $d\left(  t,x\right)  =c\in\left(
-\delta,\delta\right) $ are smooth hypersurfaces, necessarily
$d\left(  t_{0},x_{0}\right) \neq\delta/2$. If moreover $\left(
t_{0},x_{0}\right)  \notin\Gamma_{t}$, then (\ref{eqsps}),
(\ref{min}) and the smoothness of $v$ imply that, at $\left(
t_{0},x_{0}\right) $ one has $\varphi_{t}=v_{t}$ and
$D\varphi=Dv$. Therefore, using (\ref{sps}) we
conclude that:%
\begin{equation}
\left\{
\begin{array}
[c]{ll}%
\varphi_{t}\left(  t_{0},x_{0}\right)  +F\left(  D\varphi\left(  t_{0}%
,x_{0}\right)  ,D^{2}\varphi\left(  t_{0},x_{0}\right)  \right)
\geq0., &
\text{if }D\varphi\left(  t_{0},x_{0}\right)  \neq0,\medskip\\
\varphi_{t}\left(  t_{0},x_{0}\right)  \geq0, & \text{otherwise.}%
\end{array}
\right.  \label{phiissps}%
\end{equation}
Now we shall prove that the above identity also holds when $\left(
t_{0},x_{0}\right)  \in\Gamma_{t}$. Let $Q:=%
{\textstyle\bigcup\nolimits_{t\in\left[  0,T\right]  }}
\left\{  t\right\}  \times\Gamma_{t}$; this is precisely the set
of zeroes of $d$. As $\left\vert Dd\right\vert =1$ on
$I_{\delta}$, we infer that $Q$ is a smooth hypersurface of
$\left[  0,T\right]  \times M$; since (\ref{eqsps}) and
(\ref{min}) imply that $v-\varphi$ has a minimum at $\left(  t_{0}%
,x_{0}\right)  $, the following holds for every tangent vector
$\left(
\tau,\xi\right)  \in TQ_{\left(  t_{0},x_{0}\right)  }$:%
\begin{equation} \label{ker}
\varphi_{t}\left(  t_{0},x_{0}\right)  \tau+D\varphi\left(
t_{0},x_{0}\right)  (\xi) =0;%
\end{equation}
moreover, for every curve $\gamma:\left(  -1,1\right)
\rightarrow\Gamma_{t_{0}}$ with $\gamma\left(  0\right)  =x_{0}$,
and $\gamma^{\prime}\left(  0\right)
=\xi$ we have,%
\begin{equation}
\frac{d^{2}}{ds^{2}}\varphi\left(  t_{0},\gamma\left(  s\right)
\right)
|_{s=0}\leq0.\label{2der}%
\end{equation}
From identity (\ref{ker}) we deduce that%
\begin{equation}
\left(  \varphi_{t}\left(  t_{0},x_{0}\right)  ,D\varphi\left(  t_{0}%
,x_{0}\right)  \right)  =\lambda\left(  d_{t}\left(
t_{0},x_{0}\right)
,Dd\left(  t_{0},x_{0}\right)  \right)  ,\label{grad phi}%
\end{equation}
for some $\lambda\in\mathbb{R}$, whereas (\ref{2der}) merely states that:%
\[
\left\langle D^{2}\varphi\left(  t_{0},x_{0}\right)
\xi,\xi\right\rangle \leq-\left\langle \nabla\varphi\left(
t_{0},x_{0}\right)  ,\gamma^{\prime\prime }\left(  0\right)
\right\rangle .
\]
Taking into account that $\left\langle \nabla d\left(
t_{0},\gamma\left( 0\right)
\right)  ,\gamma^{\prime}\left(  0\right)  \right\rangle =0$, we obtain:%
\begin{align}
\left\langle D^{2}\varphi\left(  t_{0},x_{0}\right)
\xi,\xi\right\rangle  & \leq-\left\langle \nabla\varphi\left(
t_{0},x_{0}\right)  ,\gamma^{\prime\prime
}\left(  0\right)  \right\rangle \label{hess phi}\\
&  =-\lambda\left\langle \nabla d\left(  t_{0},x_{0}\right)
,\gamma^{\prime\prime }\left(  0\right)  \right\rangle
=\lambda\left\langle D^{2}d\left( t_{0},x_{0}\right)
\xi,\xi\right\rangle .\nonumber
\end{align}
Given a smooth curve $\eta:\left(  -1,1\right)  \rightarrow Q$
such that $\eta\left(  0\right)  =\left(  t_{0},x_{0}\right)  $
and $\eta^{\prime }\left(  0\right)  =-\left(  d_{t}\left(
t_{0},x_{0}\right)  ,\nabla d\left(
t_{0},x_{0}\right)  \right)  $ the following holds:%
\[
\frac{d}{dt}\varphi\left(  \eta\left(  t\right)  \right)  |_{t=0}%
=-\lambda\left(  d_{t}\left(  t_{0},x_{0}\right)  ^{2}+\left\vert
Dd\left( t_{0},x_{0}\right)  \right\vert ^{2}\right)  .
\]
Since necessarily $v\left(  \eta\left(  t\right)  \right) =
v\left( \eta\left(  0\right)  \right)  =0$ for $t\geq0$
sufficiently small, (\ref{eqsps}) and (\ref{min}) imply that
$\lambda\geq0$.

Now, (\ref{phiissps}) trivially holds if $\lambda=0$. Suppose
$\lambda>0$,
then using (\ref{grad phi}), (\ref{hess phi}) we deduce%
\begin{align*}
-F\left(  D\varphi\left(  t_{0},x_{0}\right)  ,D^{2}\varphi\left(  t_{0}%
,x_{0}\right)  \right)   &  \leq-F\left(  \lambda Dd\left(  t_{0}%
,x_{0}\right)  ,\lambda D^{2}d\left(  t_{0},x_{0}\right)  \right)  \\
&  =-\lambda F\left(  Dd\left(  t_{0},x_{0}\right)  ,D^{2}d\left(  t_{0}%
,x_{0}\right)  \right)  ,
\end{align*}
and%
\[
\varphi_{t}\left(  t_{0},x_{0}\right)  =\lambda d_{t}\left(  t_{0}%
,x_{0}\right)  =-\lambda F\left(  Dd\left(  t_{0},x_{0}\right)
,D^{2}d\left( t_{0},x_{0}\right)  \right)  .
\]
Therefore, we conclude that $\varphi$ satisfies (\ref{phiissps})
at $\left( t_{0},x_{0}\right)  $.

A completely analogous proof shows that $v_{\varepsilon }$ is a
supersolution, and $w$ and $w_{\varepsilon}$ are subsolutions to
(\ref{curvatureevolutioneq}).
\end{proof}

\medskip

\begin{rem}
{\em In the very special case of the evolution by mean curvature
in arbitrary codimension (see Example \ref{mean curvature in
arbitrary codimension}) the above proof breaks down because the
sets $\Gamma_t$ are no longer hypersurfaces of $M$, but
$k$-codimensional submanifolds. The consistency of the generalized
motion (given, e.g., by Corollary \ref{generalized motion by mean
curvature in arbitrary codimension}) with the classical evolution
thus remains an open problem (whose solution would probably
require a careful analysis of the properties of the distance
functions to the submanifolds $\Gamma_t$, and of the eigenvalues
of their Hessians, similar to the study carried out in \cite{AS}
for the case $M=\mathbb{R}^{n}$).}
\end{rem}

\bigskip

\section{Counterexamples and conjectures}

In this final section we provide some counterexamples showing that
many well known properties of the evolutions by mean curvature in
$\mathbb{R}^{n}$ fail when $M$ is a Riemannian manifold of negative
curvature.

\begin{ex}\label{d is not supersolution when curvature is negative}
{\em When $\left(  M,g\right)  $ is the Euclidean space equipped
with the canonical metric, Ambrosio and Soner have proved in
\cite{AS} that the distance function $\left\vert d\right\vert $ is
always a supersolution to the mean curvature equation
\eqref{meancurvature}. This is no longer the case of a general
Riemannian manifold, as the following example shows.

Let $\left(  M,g\right)  $ be a surface of revolution embedded in
$\mathbb{R}^{3}$, locally parameterized by:%
\[
I\times\left(  -\pi,\pi\right)  \ni\left(  s,\theta\right)
\mapsto\left( r\left(  s\right)  \cos\theta,r\left(  s\right)
\sin\theta,s\right) \in\mathbb{R}^{3},
\]
where $I\subseteq\mathbb{R}$ is an open interval and $r\in
C^{\infty}\left( I\right)  $ with $r\geq\rho>0$. In these
coordinates, the metric takes the
form:%
\[
\left(
\begin{array}
[c]{cc}%
r^{\prime}\left(  s\right)  ^{2}+1 & 0\\
0 & r\left(  s\right)  ^{2}%
\end{array}
\right).
\]
Suppose $0\in I$ and $r^{\prime}\left(  0\right)  =0$; then the
\textquotedblleft Equator\textquotedblright\ $s=0$ is a geodesic of
$M$, denote it by $\Gamma_{0}$. The classical evolution by mean
curvature starting from $\Gamma_{0}$ is constant in time. Therefore,
the corresponding signed distances satisfy $d\left( t,\cdot\right)
=d\left(  0,\cdot\right)  \equiv d$ for every $t\in\mathbb{R}$ (we
shall assume $d>0$ for $s>0$). Let us next explicitly compute $d$.
The geodesics of $M$ that are orthogonal to $\Gamma_{0}$ are of the
form $\left(  s\left(  t\right) ,\theta\right)  $ with
$\theta\in\left(  -\pi,\pi\right)  $ constant. Take such a geodesic
and assume that is parameterized by arc length and satisfies
$s^{\prime}>0$. In particular, since its tangent vector $\left(
s^{\prime},0\right)  $ must be of norm one,
\begin{equation}
v\left(  s\left(  t\right)  \right)  ^{2}\left(  s^{\prime}\left(
t\right) \right)  ^{2}=1,\qquad v\left(  s\right)
:=\sqrt{r^{\prime}\left(  s\right)
^{2}+1}.\label{n1}%
\end{equation}
Clearly, $d\left(  s\left(  t\right)  ,\theta\right)  =t$ and
$\partial _{s}d\left(  s\left(  t\right)  ,\theta\right)
s^{\prime}\left(  t\right) =1$. Identity (\ref{n1}) and our
assumption $s^{\prime}>0$ allow us to conclude that
$\partial_{s}d\left(  s,\theta\right)  =v\left(  s\right)  $ and
$Dd=\left(  1/v,0\right)  $. Finally, a direct computation gives:%
\[
\left\vert Dd\right\vert \operatorname*{div}\left(
\frac{Dd}{\left\vert Dd\right\vert }\right)  =\frac{r^{\prime}}{rv}.
\]
The function $\left\vert d\right\vert $ will be a supersolution to
the mean
curvature equation provided $r^{\prime}\left(  s\right)  \operatorname{sign}%
\left(  s\right)  \leq0$. This is always the case if the curvature
of $M$ remains nonnegative. On the other hand, $\left\vert
d\right\vert $ will a subsolution whenever the curvature of $M$ is
nonpositive everywhere. Finally, taking for instance $r\left(
s\right)  =1+\cos^{2}s  $ we are able to produce a $\left\vert
d\right\vert $ that is not a supersolution, neither a subsolution to
the mean curvature equation.

\medskip

\noindent {\bf Conjecture:} If $M$ has nonnegative sectional
curvature then $|d|$ is always a supersolution. If $M$ has
negative curvature then there always exists $\Gamma_{0}$ such that
$|d|$ is not a supersolution.}
\end{ex}

\medskip

\begin{ex}\label{increasing of the distance fails with negative
curvature} {\em When $M=\mathbb{R}^{n}$, Evans and Spruck
\cite[Theorem 7.3]{ES1} showed that if $\Gamma_{0}$,
$\hat{\Gamma}_{0}$ are compact level sets and $\Gamma_{t}$,
$\hat{\Gamma}_{t}$ are the corresponding generalized evolutions by
mean curvature, then
$$
\textrm{dist}(\Gamma_{0}, \hat{\Gamma}_{0})\leq \textrm{dist}(\Gamma_{t}, \hat{\Gamma}_{t})
$$
for all $t>0$. This result fails for manifolds of negative
curvature. For instance, let $M=\{(x,y,z)\in\mathbb{R}^{3}:
x^{2}+y^{2}=1+z^{2}\}$ be a hyperboloid of revolution embedded in
$\mathbb{R}^{3}$. Let $$\Gamma_{0}=\{(x,y,z)\in M: z=0\},$$ and
$$\hat{\Gamma}_{0}=\{(x,y,z)\in M: z=1\}.$$ Then
$$\Gamma_{t}=\Gamma_{0} \textrm{ for all } t>0, \qquad \textrm{and}
\qquad \textrm{dist}(\Gamma_{0}, \hat{\Gamma}_{0})>0,
$$
but $$\textrm{dist}({\Gamma}_{t}, \hat{\Gamma}_{t})=\textrm{dist}(
{\Gamma}_{0}, \hat{\Gamma}_{t})\to 0 \textrm{ as } t\to \infty.$$

\medskip

\noindent {\bf Conjecture:} Evans-Spruck's \cite[Theorem 7.3]{ES1}
result holds true for all manifolds of nonnegative sectional
curvature, but fails for all manifolds of negative curvature. }
\end{ex}

\medskip

\begin{ex}\label{no conservation of Lip constant for negative curvature}
{\em In the case $M=\mathbb{R}^{n}$ it is well known that equation
\eqref{curvatureevolutioneq} preserves Lipschitz properties of the
initial data. Namely, if $g$ is $L$-Lipschitz and $u$ is the unique
solution of \eqref{curvatureevolutioneq} then $u(t, \cdot)$ is
$L$-Lipschitz too, for all $t>0$; see \cite[Chapter 3]{Giga}. Since
the proof of Theorem 7.3 in \cite{ES1} remains valid for any
manifold provided that one assumes the Lipschitz preserving property
of \eqref{curvatureevolutioneq}, the preceding example also shows
that \eqref{curvatureevolutioneq} does not preserve Lipschitz
constants when $M$ is a hyperboloid of revolution.

\medskip

\noindent {\bf Conjecture:} The equation
\eqref{curvatureevolutioneq} has the Lipschitz preserving property
if and only if $M$ has nonnegative sectional curvature. }
\end{ex}


\bigskip

\section{Appendix: Existence and uniqueness of viscosity solutions to a (nonsingular) general parabolic equation}

In this appendix we present the standard definition of viscosity
solution and state existence and comparison result for viscosity
solutions to non-singular parabolic fully nonlinear equations.

 \begin{defn}
{\em Let $M$ be a finite-dimensional Riemannian manifold, and a
continuous function $F:(0,T) \times M \times \mathbb{R}\times
J^{2}M\to\mathbb{R}$. Consider the parabolic equation
 \begin{equation}\label{generalequation}
u_t+F(t,x,u,Du,D^2u)=0, \end{equation} where $u$ is a function of
$(t,x)$. We say that an USC function $u:(0,T)\times
M\to\mathbb{R}$ is a viscosity subsolution of the partial
differential evolution equation provided that
    $
    a+F(t,x, u(t,x), \zeta, A)\leq 0
    $
for all $(t,x)\in (0,T)\times M$ and $(a,\zeta, A)\in
\mathcal{P}^{2,+}u(t,x)$. Similarly, a viscosity supersolution of
\eqref{generalequation} is a LSC function $u:(0,T)\times
M\to\mathbb{R}$ such that
    $
   a+ F(t,x, u(t,x), \zeta, A)\geq 0
    $
for every $(t,x)\in (0,T)\times M$ and $(a,\zeta, A)\in
\mathcal{P}^{2,-}u(t,x)$. If $u$ is both a viscosity subsolution
and a viscosity supersolution of  $u_t+F(t,x,u,Du,D^2u)=0$, we say
that $u$ is a viscosity solution.}
\end{defn}

\begin{rem}
{\em If $u$ is a subsolution of  $u_t+F(t,x,u,Du,D^2u)=0$ and $F$
is lower semicontinuous, then $a+F(t,x, u(t,x), \zeta, A)\leq 0$
for every $(a,\zeta, A)\in \mathcal{\overline{P}}^{\, 2,+}u(t,x)$
and every $(t,x) \in (0,T)\times M$. A similar observation applies
to supersolutions when $F$ is upper semicontinuous, and to
solutions when $F$ is continuous.}
\end{rem}

 \begin{thm} \label{general}
Let $M$ be a compact and $F:(0,T)\times M\times\mathbb{R}\times
J^{2}M\rightarrow\mathbb{R}$ be continuous, proper, and such that

\begin{enumerate}

\item  there exists $\gamma>0$ with
    $$\gamma(r-s)\leq F(t,x,r,\zeta,Q)-F(t,x,s,\zeta,Q)$$
for all $r\geq s$, $(t,x,\zeta, Q)\in (0,T)\times M\times
J^{2}M)$;
\item there exists a function $\omega:[0,\infty]\rightarrow[0,\infty]$
with $\lim_{t\to 0^{+}}\omega(t)=0$ and such that, for every
$\alpha>0$,
    $$
    F(t,y,r, \alpha\exp_{y}^{-1}(x),Q)-F(t,x,r,-\alpha\exp_{x}^{-1}(y),P)
    \leq \omega\left(\alpha d(x,y)^2+d(x,y)\right)
    $$
for all $t\in (0,T)$, $x,y\in M$, $r\in\mathbb R$, $P\in
\mathcal{L}^2_{s}(TM_x), Q\in \mathcal{L}_{s}^2(TM_y)$ with
\begin{equation*}
        -\left(
        \frac{1}{\varepsilon_{\alpha}}+\|A_{\alpha}\|\right)I\leq
\left(\begin{array}{cc}
 P & 0  \\
 0 & -Q \\
\end{array}\right)\leq A_{\alpha}+\varepsilon_{\alpha} A_{\alpha}^{2}
\end{equation*}
and
$\displaystyle{\varepsilon_{\alpha}=(2+2\|A_{\alpha}\|)^{-1}}$,
where $A_{\alpha}$ is the second derivative of the function
$\varphi_{\alpha}(x,y)=\frac{\alpha}{2}d(x,y)^{2}$ at the point
$(x,y)\in\ M\times M$ with  $d(x,y)<\min\{ i_{M}(x),
\,i_{M}(y)\}.$
\end{enumerate}
Assume that   $u\in USC([0,T)\times M)$ is a subsolution  and
$v\in LSC([0,T)\times M)$ is a supersolution  of
\eqref{generalequation} on $M$ and $u \le v$ on $\{0\}\times M$.
Then $u \le v$ on $[0,T)\times M$.
\end{thm}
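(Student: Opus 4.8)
The plan is to run the proof of Theorem~\ref{compact} almost verbatim, with two changes: the stationary maximum principle Theorem~\ref{key to comparison} is replaced by its parabolic version Theorem~\ref{keyparabolic}, and the curvature estimate of Proposition~\ref{bound for A with no restriction on curvature} is replaced by the structure hypothesis~(2). First I would reduce: fixing $0<S<T$ and restricting attention to the compact set $[0,S]\times M$, on which the USC functions $u$ and $-v$ are bounded above, it suffices to prove $u\le v$ on $[0,S)\times M$. For $\varepsilon>0$ set $\widetilde u:=u-\varepsilon/(S-t)$; using hypothesis~(1) (monotonicity of $F$ in $r$) together with $\widetilde u\le u$, one checks that $\widetilde u$ is a \emph{strict} subsolution, i.e.\ $a+F(t,x,\widetilde u(t,x),\zeta,A)\le -c$ for every $(a,\zeta,A)\in\overline{\mathcal{P}}^{2,+}\widetilde u(t,x)$, where $c:=\varepsilon/S^2>0$ (this extension to $\overline{\mathcal{P}}^{2,+}$ is licit since $F$ is continuous), and moreover $\widetilde u(t,x)\to-\infty$ uniformly in $x$ as $t\to S^-$. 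Since $\widetilde u\le v$ for every $\varepsilon$ yields $u\le v$, it is enough to derive a contradiction from $\sigma:=\sup_{[0,S)\times M}(\widetilde u-v)>0$.

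Next I would double the spatial variables, keeping a single time variable (as Theorem~\ref{keyparabolic} is set up for that): for $\alpha>0$ put
\[
m_\alpha:=\sup\Bigl\{\widetilde u(t,x)-v(t,y)-\tfrac{\alpha}{2}d(x,y)^2 \;:\; t\in[0,S),\ x,y\in M\Bigr\}.
\]
This is finite and, thanks to $\widetilde u\to-\infty$ near $S$ and compactness of $M$, attained at some $(t_\alpha,x_\alpha,y_\alpha)$ with $t_\alpha<S$. The standard doubling-of-variables argument (exactly as in the proof of Theorem~\ref{compact}, cf.\ the proof of Lemma~\ref{lemma-m-alphas} and \cite{CIL}) gives $\alpha\, d(x_\alpha,y_\alpha)^2\to0$, $m_\alpha\to\sigma$, and, along a subsequence, $x_\alpha,y_\alpha$ converging to a common point; in particular $d(x_\alpha,y_\alpha)<\min\{i_M(x_\alpha),i_M(y_\alpha)\}$ for large $\alpha$, so $\varphi(t,x,y):=\tfrac\alpha2 d(x,y)^2$ is $C^2$ near $(t_\alpha,x_\alpha,y_\alpha)$, with $D_x\varphi=-\alpha\exp_{x_\alpha}^{-1}(y_\alpha)$, $-D_y\varphi=\alpha\exp_{y_\alpha}^{-1}(x_\alpha)$, $\varphi_t\equiv0$, and $A_\alpha:=D^2_{x,y}\varphi(t_\alpha,x_\alpha,y_\alpha)$ precisely the matrix occurring in hypothesis~(2). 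If $t_\alpha=0$ along a subsequence, then $m_\alpha\le\sup_{x,y}\{\widetilde u(0,x)-v(0,y)-\tfrac\alpha2 d(x,y)^2\}\to\sup_x(\widetilde u(0,x)-v(0,x))\le\sup_x(u(0,x)-v(0,x))\le0$, contradicting $m_\alpha\ge\sigma>0$; hence $t_\alpha\in(0,S)$ for all large $\alpha$, and $(t_\alpha,x_\alpha,y_\alpha)$ is an interior maximum of $\omega-\varphi$ on $(0,S)\times M\times M$, where $\omega(t,x,y):=\widetilde u(t,x)+(-v)(t,y)$.

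To apply Theorem~\ref{keyparabolic} I must check its hypothesis~\eqref{boundconditions}. For each fixed $\alpha$ choose $\tau>0$ with $[t_\alpha-\tau,t_\alpha+\tau]\subset(0,S)$; then if $(a,\zeta,A)\in\overline{\mathcal{P}}^{2,+}\widetilde u(t,x)$ with $d(x,x_\alpha)+|t-t_\alpha|\le\tau$ and $|\widetilde u(t,x)|+|\zeta|+\|A\|\le M_0$, the (extended) subsolution inequality gives $a\le -c-F(t,x,\widetilde u(t,x),\zeta,A)$, which is bounded above because $F$ is continuous on the \emph{compact} set cut out by these constraints ($M$ is compact and $t$ stays in a compact subinterval of $(0,S)$); the analogous bound for $-v$ follows by passing to $(-b,-\eta,-B)\in\overline{\mathcal{P}}^{2,-}v(t,y)$ and using that $v$ is a supersolution. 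Theorem~\ref{keyparabolic}, used with $\varepsilon=\varepsilon_\alpha:=(2+2\|A_\alpha\|)^{-1}$, then produces $b_1,b_2\in\mathbb{R}$ with $b_1+b_2=\varphi_t=0$ and forms $P_\alpha\in\mathcal{L}^2_s(TM_{x_\alpha})$, $Q_\alpha\in\mathcal{L}^2_s(TM_{y_\alpha})$ such that $(b_1,D_x\varphi,P_\alpha)\in\overline{\mathcal{P}}^{2,+}\widetilde u(t_\alpha,x_\alpha)$, $(-b_2,-D_y\varphi,Q_\alpha)\in\overline{\mathcal{P}}^{2,-}v(t_\alpha,y_\alpha)$ (writing $Q_\alpha$ for minus the second diagonal block), with $\operatorname{diag}(P_\alpha,-Q_\alpha)$ obeying exactly the two-sided matrix inequality of hypothesis~(2).

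To conclude, since $\widetilde u$ is a strict subsolution and $v$ a supersolution, $b_1+F(t_\alpha,x_\alpha,\widetilde u(t_\alpha,x_\alpha),\zeta_\alpha,P_\alpha)\le-c$ and $-b_2+F(t_\alpha,y_\alpha,v(t_\alpha,y_\alpha),\eta_\alpha,Q_\alpha)\ge0$, where $\zeta_\alpha=-\alpha\exp_{x_\alpha}^{-1}(y_\alpha)$ and $\eta_\alpha=\alpha\exp_{y_\alpha}^{-1}(x_\alpha)$; subtracting and cancelling $b_1=-b_2$ gives $F(t_\alpha,y_\alpha,v(t_\alpha,y_\alpha),\eta_\alpha,Q_\alpha)-F(t_\alpha,x_\alpha,\widetilde u(t_\alpha,x_\alpha),\zeta_\alpha,P_\alpha)\ge c$. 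From $m_\alpha\ge\sigma>0$ one has $r_1:=\widetilde u(t_\alpha,x_\alpha)\ge v(t_\alpha,y_\alpha)=:r_2$, so hypothesis~(1) lets me replace $r_2$ by $r_1$ in the first $F$ up to a nonpositive error, yielding $c\le F(t_\alpha,y_\alpha,r_1,\eta_\alpha,Q_\alpha)-F(t_\alpha,x_\alpha,r_1,\zeta_\alpha,P_\alpha)$; hypothesis~(2) (whose matrix condition has been arranged) then bounds the right-hand side by $\omega\bigl(\alpha\, d(x_\alpha,y_\alpha)^2+d(x_\alpha,y_\alpha)\bigr)$, which tends to $0$ as $\alpha\to\infty$. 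Hence $c\le0$, a contradiction. The one step requiring genuine care is the verification of~\eqref{boundconditions}, and it is precisely there that the non-singularity of $F$ (continuity on all of $J^2M$) and the compactness of $M$ are used; note that, in contrast with Theorem~\ref{compact}, no curvature hypothesis enters and no separate treatment of the coincidence set $\{x_\alpha=y_\alpha\}$ is needed, so the proof is indeed easier.
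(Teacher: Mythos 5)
Your proposal is correct and follows exactly the route the paper indicates for this result: the paper omits the details, saying only that the proof combines the ideas of \cite[Theorem 8.2]{CIL} with the manifold techniques of \cite{AFS} and Theorem \ref{keyparabolic}, and your argument (strict subsolution via $u-\varepsilon/(S-t)$, doubling only the space variable with the penalization $\tfrac{\alpha}{2}d(x,y)^2$, verifying \eqref{boundconditions} by continuity of $F$ and compactness, applying Theorem \ref{keyparabolic} with $\varepsilon=\varepsilon_\alpha$, and closing with hypotheses (1) and (2)) is precisely that intended proof, with the hypotheses matched correctly.
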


The proof of this result is a combination of the ideas in the
proof of \cite[ Theorem 8.2]{CIL} with the new techniques for
second order nonsmooth analysis on manifolds developed when
dealing with the stationary case in \cite[Theorem 4.2]{AFS}. See
also Theorem \ref{keyparabolic}, which should be used in this
proof. We leave the details to the reader's care.

Analogous results to Corollary 4.6 and Corollary 4.8 stated in
\cite{AFS} can also be obtained in a similar way for these general
evolution equations. We say in such cases that ``comparison
holds".

As it is customary in these cases, whenever the Comparison Theorem
holds there is no difficulty in applying Perron's method (see
\cite{CIL, I}) to show that the problem
\begin{equation} \label{generalequation2} \tag{\texttt{GEE}}
\left\{
  \begin{array}{ll}
    u_{t}+F(t,x, u(t,x), Du(t,x), D^{2}u(t,x))= 0, \\
    u(0,x)=g(x).
  \end{array}
\right.
\end{equation}
has a unique bounded viscosity solution $u$ on $M$, provided that
comparison holds and one is able to find a viscosity subsolution
$\underline{u}$ and a viscosity supersolution $\overline{u}$ such
that $\underline{u}_*(0,x)=\overline{u}^*(0,x)=g(x)$. In fact $u$
is given by
\begin{equation*}
  W(t,x)=\sup\{w(t,x):\underline{u}\leq w\leq \overline{u} \text{ and }
    w \text{ is a subsolution of } \eqref{generalequation}\}.
\end{equation*}

\bigskip

\begin{center}
{\bf Acknowledgements}
\end{center}
We thank Juan Ferrera for several helpful conversations related to
this paper. M. Jimenez-Sevilla is indebted to the Department of
Mathematics at the Ohio State University and very specially to
David Goss for his kind hospitality.

\bigskip


\end{document}